\titleformat{\subsection}[runin]
{\bfseries} {\thesubsection{.}}{0.15cm}{}[.]
\titleformat{\subsubsection}[runin]
{\em}{\thesubsubsection{.}}{0.15cm}{}[.]
\newtheorem{theorem}{Theorem}[section]
\newtheorem{proposition}[theorem]{Proposition}
\newtheorem{lemma}[theorem]{Lemma}
\newtheorem{claim}[theorem]{Claim}
\newtheorem{corollary}[theorem]{Corollary}
\theoremstyle{definition}
\newtheorem{definition}[theorem]{Definition}
\newtheorem{remark}[theorem]{Remark}
\numberwithin{equation}{section}
\numberwithin{figure}{section}
\newcommand\Cscr{\mathscr{C}}
\newcommand\Oscr{\mathscr{O}}
\newcommand\C{\mathbb{C}}
\newcommand\D{\overline{\mathbb D}}
\renewcommand\D{\mathbb D}
\newcommand\N{\mathbb{N}}
\renewcommand\P{\mathbb{P}}
\newcommand\R{\mathbb{R}}
\newcommand\Z{\mathbb{Z}}
\renewcommand\c{\mathbb{C}}
\newcommand\wt{\widetilde}
\newcommand\wh{\widehat}
\renewcommand\span{\mathrm{span}}
\newcommand\supp{\mathrm{supp}}
\def\span{\mathrm{span}}
\begin{document}


\fancyhead[LO]{Mittag-Leffler theorem for proper minimal surfaces}
\fancyhead[RE]{Antonio Alarc\'on and Tja\v{s}a Vrhovnik}
\fancyhead[RO,LE]{\thepage}

\thispagestyle{empty}


\begin{center}

{\bf\Large The Mittag-Leffler theorem for proper minimal surfaces and directed meromorphic curves}

\medskip

%
%
{\bf Antonio Alarc\'on and  Tja\v{s}a Vrhovnik}
\end{center}


%
%
\medskip

\begin{quoting}[leftmargin={5mm}]
{\small
\noindent {\bf Abstract}\hspace*{0.1cm}
We establish a Mittag-Leffler-type theorem with approximation and interpolation for meromorphic curves $M\to \C^n$ ($n\geq 3$) directed by Oka cones in $\C^n$ on any open Riemann surface $M$. We derive a result of the same type for proper conformal minimal immersions $M\to \R^n$.  This includes interpolation in the poles and approximation by embeddings, the latter if $n\ge 5$ in the case of minimal surfaces.
As applications, we show that complete minimal ends of finite total curvature in $\R^5$ are generically embedded, and characterize those open Riemann surfaces which are the complex structure of a proper minimal surface in $\R^3$ of weak finite total curvature. 

\noindent{\bf Keywords}\hspace*{0.1cm} 
minimal surface,
Mittag-Leffler theorem,
meromorphic immersion,
Riemann surface,
directed immersion,
Oka manifold. 


\noindent{\bf Mathematics Subject Classification (2020)}\hspace*{0.1cm} 
32E30, 
32H04, 
53A10, 
53C42. 
}
\end{quoting}



\section{Introduction 
}
\label{sec:intro}

\noindent
The last two decades have witnessed the development of the approximation and interpolation theories for minimal surfaces in Euclidean space $\R^n$, $n\ge 3$ (see the monograph by Alarc\'on, Forstneri\v c, and L\'opez~\cite{alarcon2021minimal}). In particular, analogous results to the seminal Runge, Weierstrass, and Mittag-Leffler theorems in complex analysis from the 19th century have been recently established for the family of conformal minimal immersions $M\to\R^n$ of any open Riemann surface $M$. This has led to a considerable number of applications to the global theory of minimal surfaces, a main focus of interest since the works of Osserman in the second half of the 20th century. 
The key tool for this task has been the well known close connection of minimal surfaces with complex analysis via the Weierstrass representation formula (see e.g.~\cite[Theorem~2.3.4]{alarcon2021minimal}), together with the classical theories of approximation and interpolation for holomorphic functions on open Riemann surfaces, as well as the modern Oka theory (see~\cite{fornaess2020,forstneric2017stein} for background). Recall that an immersion $u=(u_1,\ldots,u_n):M\to\R^n$ is conformal if and only if the $(1,0)$-differential $\partial u=(\partial u_1,\ldots,\partial u_n)$ of $u$ satisfies the nullity condition
\[
	(\partial u_1)^2+\cdots+(\partial u_n)^2=0.
\]
Such an immersion $u$ is minimal if and only if it is harmonic if and only if $\partial u$ is holomorphic. Conversely, a nowhere vanishing $\C^n$-valued holomorphic 1-form $\phi=(\phi_1,\ldots,\phi_n)$ satisfying $\phi_1^2+\cdots+\phi_n^2=0$ and the period vanishing condition 
\[
	\Re\int_C\phi=0\quad \text{for every closed curve $C$ in $M$}
\] 
integrates to a conformal minimal immersion $u=\Re\int\phi:M\to\R^n$ with $\partial u=\phi$. 
We refer a reader who may wish to delve into the background on minimal surface theory to the classical surveys by Osserman~\cite{osserman1986survey} and Nitsche~\cite{nitsche1989lectures}, as well as to the modern monographs by Colding and Minicozzi~\cite{colding2011course,colding1999minimal}, Dierkes, Hildebrandt, and Sauvigny~\cite{dierkes2010minimal}, and Meeks and P\'erez~\cite{meeks2011classical,meeks2012survey}, among many others; see~\cite{alarcon2021minimal} for a comprehensive study of minimal surfaces from a complex analytic viewpoint.

The Mittag-Leffler theorem from 1884~\cite{mittag1884demonstration} states that for any closed discrete subset $E \subset \C$ and any meromorphic function $f$ on a neighbourhood of $E$ there exists a meromorphic function $g$ on $\C$ which is holomorphic on $\C \setminus E$ and satisfies that the difference $g-f$ is holomorphic at every point in $E$. This result was extended by Florack in 1948~\cite{florack1948regulare} (see also Behnke and Stein~\cite{behnke1949entwicklung}) to functions on open Riemann surfaces, including interpolation on a closed discrete set; approximation on Runge compact subsets was added by Bishop in 1958~\cite{bishop1958subalgebras}. The natural counterpart of a pole of a meromorphic function in minimal surface theory is a complete end of finite total curvature (see e.g.~\cite[\textsection 9]{osserman1986survey} or~\cite[\textsection 4]{alarcon2021minimal}). Indeed, if $u:\overline\D_*=\{\zeta\in\c: 0<|\zeta|\le1\}\to\R^n$ is a complete conformal minimal immersion of finite total curvature then $\partial u$ is holomorphic on $\overline\D_*$ and extends meromorphically to $\overline\D=\overline\D_*\cup\{0\}$ with an effective pole at $0$. Moreover, $u:\overline\D_*\to\R^n$ is a proper map.
Recall that such a $u$ is of \emph{finite total curvature} if 
\[
	\textrm{TC}(u) = \int_{\D_*}KdA > -\infty,
\]
where $g=u^{*}ds^2$ denotes the induced Riemannian metric on $\D_*$, $K\leq 0$ is the Gaussian curvature function, and $dA$ is the area element of $(\D_*,g)$. We refer to~\cite{ChernOsserman1967JAM,osserman1986survey,barbosa1986minimal,yang1994complete} for background on the theory of complete minimal surfaces of finite total curvature.

Alarc\'on and L\'opez proved in~\cite{alarcon2022algebraic} that for any closed discrete subset $E$ of an open Riemann surface $M$ and any complete conformal minimal immersion $u:V\setminus E\to\R^n$ on a neighbourhood $V$ of $E$ such that every point in $E$ is an end of finite total curvature of $u$, there is a complete conformal minimal immersion $\tilde u:M\setminus E\to\R^n$ such that every point in $E$ is an end of finite total curvature of $\tilde u$ and the map $\tilde u-u$ is continuous (hence harmonic) on $V$ and vanishes at all points in $E$; that is, a Mittag-Leffler theorem for complete immersed minimal surfaces. Their theorem also includes approximation and jet-interpolation.
In this paper we shall extend this result in two different directions, namely, we shall establish Mittag-Leffler-type theorems for proper minimal surfaces in $\R^n$, embedded if $n\ge 5$ (see Sec.\ \ref{ss:1.1}), and for meromorphic curves  in $\C^n$ directed by Oka cones (see Sec.\ \ref{ss:1.2}). 

\subsection{The Mittag-Leffler theorem for proper minimal surfaces}\label{ss:1.1}
Here is our first main result.
%
%
\begin{theorem}
\label{th:intro-ML-CMI}
Let $M$ be an open Riemann surface, $\varnothing\neq E\subset M$ be a closed discrete subset, and $V\subset M$ be a locally connected smoothly bounded closed neighbourhood of $E$ all whose connected components are Runge compact sets. Let $n\ge 3$ be an integer and assume that $u:V\setminus E\to\R^n$ is a complete conformal minimal immersion whose restriction to each component of $V\setminus E$ is of finite total curvature; that is, every point in $E$ is a complete end of finite total curvature of $u$. Then there is a proper full\footnote{A conformal minimal immersion $u:M\to\R^n$ is {\em full} if $f(M)$ is not contained in any hyperplane of $\C^n$, where $f=2\partial u/\theta$ for any nowhere vanishing holomorphic 1-form $\theta$ on $M$ \cite[Definition 2.5.2]{alarcon2021minimal}.} conformal minimal immersion $\tilde u:M\setminus E\to\R^n$ such that $\tilde u-u$ is continuous (hence harmonic) at every point in $E$. Moreover, if $n\ge 5$ then we can choose $\tilde u$ to be injective (hence a proper embedding).

Furthermore, if in addition $u:V\setminus E\to \R^n$ is a proper map, and if we are given a number $\varepsilon>0$, a closed discrete subset $\Lambda\subset M$ such that $\Lambda\subset V\setminus E$ and $u|_\Lambda:\Lambda\to\R^n$ is injective if $n\ge 5$, and a map $r:E\cup\Lambda\to\N=\{1,2,\ldots\}$, then there is an immersion $\tilde u$ as above such that $|\tilde u-u|<\varepsilon$ on $V$ and $\tilde u-u$ vanishes to order $r(p)$ at each point $p\in E\cup\Lambda$.
\end{theorem}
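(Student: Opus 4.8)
The plan is to reduce the statement to one about $\Agot$-directed meromorphic $1$-forms and to obtain it from the $\Agot$-directed Mittag--Leffler theorem of Section~\ref{ss:1.2}, combined with a properness construction and, for $n\ge5$, a general-position argument. Let $\Agot=\{z\in\C^n:z_1^2+\cdots+z_n^2=0\}\setminus\{0\}$ be the punctured null quadric; for $n\ge3$ it is a smooth connected Oka cone. Fix a nowhere vanishing holomorphic $1$-form $\theta$ on $M$. By the Weierstrass correspondence recalled in the introduction, a conformal minimal immersion $v$ of an open set $M'\subset M$ into $\R^n$ is $v=\Re\int\phi$ for an $\Agot$-directed holomorphic $1$-form $\phi=\tfrac{1}{2}\theta f$, with $f=2\partial v/\theta:M'\to\Agot$, satisfying $\Re\int_C\phi=0$ along every closed curve $C$ in $M'$; moreover $v$ extends to a conformal minimal immersion near a puncture $p$ with a complete end of finite total curvature there precisely when $f$ extends meromorphically across $p$ with an effective pole. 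Writing $\phi_0=\partial u=\tfrac{1}{2}\theta f_0$, with $f_0:V\setminus E\to\Agot$ meromorphic on $V$ and with poles exactly on $E$, the theorem becomes equivalent to producing a full $\Agot$-directed meromorphic $1$-form $\tilde\phi=\tfrac{1}{2}\theta\tilde f$ on $M$ with the same principal parts as $\phi_0$ at every point of $E$, with $\Re\int_C\tilde\phi=0$ for every closed curve $C$ in $M\setminus E$, with $\tilde\phi-\phi_0$ uniformly small on $V$ and vanishing to order $r(p)-1$ at each $p\in E\cup\Lambda$, and such that $\tilde u:=\Re\int\tilde\phi$, normalised by a constant so that $\tilde u-u$ is small on $V$, is proper, and a proper embedding if $n\ge5$.

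The $\Agot$-directed Mittag--Leffler theorem of Section~\ref{ss:1.2} already supplies the poles, the jet interpolation at $E\cup\Lambda$, the approximation on $V$, fullness, and the vanishing of the real periods. To upgrade to properness I would run a recursion along a smooth Morse exhaustion $\rho:M\to[0,\infty)$ having no critical point on $E$, with regular values $0<c_1<c_2<\cdots\to\infty$ chosen so that each shell $M_j\setminus\mathring M_{j-1}$, $M_j:=\{\rho\le c_j\}$, contains at most one critical point of $\rho$ (necessarily of index $\le1$) and so that $V\cap M_1$ and $\Lambda\cap M_1$ are present already at the first step. One builds conformal minimal immersions $u_j:M\setminus E\to\R^n$ with the prescribed poles and principal parts at $E$, with $u_j-u$ vanishing to the required order at $E\cup\Lambda$, with $|u_0-u|<\varepsilon$ on $V$, with $u_{j+1}$ summably close to $u_j$ on $M_j$, and with $|u_j|>j-1$ on the shell $M_j\setminus\mathring M_{j-1}$. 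The step $u_j\rightsquigarrow u_{j+1}$ combines three ingredients: the approximation-with-interpolation technique behind the directed Mittag--Leffler theorem, used to extend $u_j$ across the shell $M_{j+1}\setminus\mathring M_j$, whether this be a noncritical extension or the passage of a single critical point; the period-vanishing correction, obtained by perturbing $\tilde f$ within a period-dominating spray of $\Agot$-valued maps — such sprays exist because $\Agot$ is an Oka cone — chosen supported off $M_j$ and vanishing at $E\cup\Lambda$, so as not to disturb the approximation or the interpolation; and the properness device of~\cite{alarcon2021minimal}, which forces $|u_{j+1}|>j$ on the new shell while keeping $u_{j+1}$ close to $u_j$ on $M_j$ (completeness is not sought, so no labyrinth is needed). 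Since $E\cap(M_{j+1}\setminus\mathring M_j)$ is finite, those punctures are treated at each step in the interpolation slot, so the principal parts at $E$ persist verbatim. The limit $\tilde u=\lim_j u_j$ then exists, is a full conformal minimal immersion on $M\setminus E$ with the desired poles, jets and approximation, and is proper — on the shells because $|\tilde u|\to\infty$ there, and near each $p\in E$ because $\tilde u-u$ extends continuously across $p$ while a complete conformal minimal immersion of finite total curvature on a punctured disk is itself a proper map (and, in the \emph{furthermore} case, because on $V$ the map $\tilde u$ is $\varepsilon$-close to the proper map $u$).

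For embeddedness when $n\ge5$ I would then make $\tilde u$ injective by a countable sequence of small generic perturbations over an exhaustion of $M\setminus E$. The family of competitors — $\Agot$-directed meromorphic $1$-forms with the fixed principal parts at $E$, the fixed jets at $E\cup\Lambda$, vanishing real periods, and uniform $\varepsilon$-closeness to $\phi_0$ on $V$ — is flexible enough to run a transversality argument because $\Agot$ is an Oka cone of dimension $n-1\ge4$: for a generic member the self-intersection locus $\{(p,q):p\ne q,\ \tilde u(p)=\tilde u(q)\}$ in $(M\setminus E)^2$ has expected real dimension $2\cdot2-n<0$ and is hence empty, and near $E$ each complete end of finite total curvature is generically embedded while distinct ends are generically disjoint. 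Keeping the perturbations small and supported off $E\cup\Lambda$ preserves the poles, the jets and the approximation, and one re-kills the finitely many relevant periods after each; properness survives because a small perturbation of $\tilde u$ that is bounded near $E$ remains proper. In the limit $\tilde u$ is injective, and being a proper continuous injective immersion it is a proper embedding.

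The main difficulty is to make all the requirements — prescribed principal parts at $E$, jets at $E\cup\Lambda$, vanishing real periods, approximation on $V$, properness, and (for $n\ge5$) embeddedness — coexist throughout a recursion whose shells keep meeting $E$, which is merely closed and discrete, so that its interaction with the exhaustion cannot be arranged once and for all; the decisive technical input is the approximation-with-interpolation theorem for $\Agot$-directed meromorphic $1$-forms, with prescribed principal parts, that underlies the directed Mittag--Leffler theorem of Section~\ref{ss:1.2}. Granting that, the result follows by the by-now-standard combination of spray-based period killing, the properness device, and the general-position argument for embeddedness.
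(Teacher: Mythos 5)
Your overall plan — reduce via the Weierstrass representation to $\mathbf{A}$-directed meromorphic $1$-forms, establish a Mergelyan/Mittag--Leffler theorem with approximation, jet interpolation, and real-period control, run a recursive exhaustion-based construction for properness, and invoke general position for embeddedness when $n\ge5$ — is essentially what the paper does; it obtains Theorem~\ref{th:intro-ML-CMI} from the proof of Theorem~\ref{th:intro-ML-DirMer} (with $A$ the null quadric) by simply ignoring the imaginary parts of periods. The genuine gap in your sketch is in the properness step. The properness device you cite covers the boundary $bL$ of the next sublevel set by two discs $\Delta$ and $\Omega$, first makes one coordinate of the directed map large on $\Delta$, then makes a second coordinate large on $\Omega$ while \emph{keeping the first coordinate fixed}; the second step requires an approximation theorem for $A$-directed maps with a prescribed component. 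In the present meromorphic framework this is a new result that must be established separately (Theorem~\ref{th:Mittag-Leffler-fixed} and Proposition~\ref{prop:semiglob-fixed}), and it is not routine: one realises the directed meromorphic map as a section of the pullback $\Sigma=(h')^{*}A$ of the ramified projection $\pi\colon A\to\C^{k}$, checks the hypotheses of the basic Oka principle for sections of ramified maps, and controls periods while keeping $h'$ fixed, all in the presence of effective poles. Without this tool the approximation on $\Omega$ undoes the largeness achieved on $\Delta$; your text never mentions it, so the properness claim is unsupported as written. A related omission: in the first paragraph of the statement $u|_{V\setminus E}$ is \emph{not} assumed proper, and one must first translate $u$ by suitably large constants on the components of $V$ so that hypothesis~\eqref{eq:mu_j} of Theorem~\ref{th:Mittag-Leffler-properness} holds; your argument only covers the ``furthermore'' case where $u$ is already proper.

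Two lesser points. You attribute the existence of period-dominating sprays to the Oka property of $\mathbf{A}_*$, but these sprays exist because $A_*$ is smooth, conical, and not contained in a hyperplane — tangential holomorphic vector fields on $A$ span $T_zA$; the Oka property of $A_*$ (and, for the fixed-components version, of the fibres of $\pi$) is used for the different purpose of extending a section from a Runge admissible set to a global one. And you propose to achieve injectivity for $n\ge5$ by a countable post-hoc sequence of perturbations of the already-built proper $\tilde u$. The paper instead proves a desingularization lemma on compact bordered pieces (Theorem~\ref{th:compact-embedding}, via Lemma~\ref{lemma:emb}) and interleaves it into the induction of Theorems~\ref{th:Mittag-Leffler} and~\ref{th:Mittag-Leffler-properness}, with the numbers $\varepsilon_j$ and Cauchy estimates guaranteeing that injectivity and properness both survive the limit. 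Your post-hoc route is conceivable but would require a separate argument that each perturbation preserves properness, vanishing periods, poles and jets — exactly the bookkeeping the interleaved approach avoids.
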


This result may be upgraded to include Mergelyan approximation on admissible sets, as opposed to mere Runge approximation (cf.~Theorem~\ref{th:Mittag-Leffler-properness}), and control on the flux as in~\cite[Theorem~6.1]{alarcon2022algebraic}. Nevertheless, the latter is well understood and we do not repeat it here.
The main improvements of Theorem~\ref{th:intro-ML-CMI} with respect to the result by Alarc\'on and L\'opez in~\cite{alarcon2022algebraic} are that we can ensure the conformal minimal immersion $\tilde u:M\setminus E\to\R^n$ to be {\em proper} (instead of just complete), and in addition an {\em embedding} if $n\ge 5$. Note that a generic conformal minimal immersion $M\to\R^n$ is complete and nonproper~\cite{alarcon2024generic}, and so finding proper examples is a considerably more difficult task.
On the other hand, Theorem~\ref{th:intro-ML-CMI} gives the first general existence result for embedded minimal surfaces in $\R^5$ with complete ends of finite total curvature; see~(ii) in the following immediate corollary. 
%
%
\begin{corollary}\label{co:intro-embedded}
If $M$ is an open Riemann surface and $E\subset M$ is a closed discrete subset, then the following hold.
\begin{enumerate}[\rm (i)]
\item There is a proper full conformal minimal immersion $u:M\setminus E\to\R^3$ such that every point in $E$ is an embedded end of finite total curvature of $u$.
\item There is a proper full conformal minimal embedding $u:M\setminus E\to\R^n$ ($n\geq 5$) such that every point in $E$ is an end of finite total curvature of $u$.
\end{enumerate}
\end{corollary}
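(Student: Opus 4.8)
The plan is to deduce Corollary~\ref{co:intro-embedded} directly from Theorem~\ref{th:intro-ML-CMI} by constructing, for each point of $E$, a local complete conformal minimal immersion of finite total curvature near that point, gluing these into a single immersion on a neighbourhood $V$ of $E$, and then applying the theorem. The main point is therefore to produce, on a suitable model neighbourhood, a complete conformal minimal immersion (indeed a proper map, and in the case $n\ge 5$ with injective restriction to a prescribed discrete set, which here can be taken empty) with all points of $E$ as ends of finite total curvature, so that the hypotheses of Theorem~\ref{th:intro-ML-CMI} are met.

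First I would choose, for each $p\in E$, a small holomorphic coordinate chart $(U_p,z_p)$ with $z_p(p)=0$ and $z_p(U_p)=\D$, with the $U_p$ pairwise disjoint, and set $V_0=\bigcup_{p\in E}\overline{U_p'}$ where $U_p'=z_p^{-1}(r\D)$ for a fixed $0<r<1$; each component $\overline{U_p'}$ is a closed disc, hence Runge in $M$, and $V_0$ is a locally connected smoothly bounded closed neighbourhood of $E$ of the type required by the theorem. Next, on each punctured disc $\overline{U_p'}\setminus\{p\}\cong\{0<|\zeta|\le r\}$ I would write down an explicit complete conformal minimal immersion of finite total curvature into $\R^n$: the standard way is to prescribe a meromorphic Weierstrass datum $\phi=(\phi_1,\dots,\phi_n)$, a nowhere vanishing holomorphic $1$-form on the punctured disc satisfying $\sum_j\phi_j^2=0$, having an effective pole at $\zeta=0$, and with vanishing real periods around the puncture; integrating gives $u_p=\Re\int\phi$. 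Completeness and finite total curvature at the puncture follow from the pole of $\phi$ at $0$ (see~\cite[\textsection 4]{alarcon2021minimal}), and after a translation/homothety one may further arrange $u_p$ to be a proper map $\overline{U_p'}\setminus\{p\}\to\R^n$ and, if $n\ge 5$, a generic choice makes it an embedding; for part~(i) with $n=3$ one takes the classical catenoidal-type end, which is embedded near the puncture. Taking $u$ to agree with $u_p$ on each component yields a complete conformal minimal immersion $u:V_0\setminus E\to\R^n$ each of whose restrictions is of finite total curvature, and which is a proper map; I would then apply Theorem~\ref{th:intro-ML-CMI} with $\Lambda=\varnothing$ (so no interpolation constraints beyond the poles) and any $r:E\to\N$, obtaining a proper full conformal minimal immersion $\tilde u:M\setminus E\to\R^n$, a proper embedding if $n\ge 5$, with $\tilde u-u$ continuous and vanishing at each $p\in E$.

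It remains to check that each point of $E$ is an \emph{end of finite total curvature} of $\tilde u$, and in case~(i) an \emph{embedded} end; this is where the small local analysis near the poles must be done carefully. Since $\tilde u-u$ extends harmonically across each $p\in E$ and vanishes there, the $(1,0)$-differential $\partial\tilde u$ and $\partial u$ differ by a holomorphic $1$-form near $p$, so $\partial\tilde u$ has at $p$ the same effective pole as $\partial u$; hence the end of $\tilde u$ at $p$ is again complete of finite total curvature by the standard characterisation. For part~(i) one must additionally arrange that $\tilde u$ is injective near each $p\in E$: since $\partial\tilde u$ is a small perturbation of the catenoidal datum $\partial u_p$, the leading asymptotic behaviour of $\tilde u$ near $p$ is catenoidal and hence embedded on a punctured neighbourhood of $p$, provided $\varepsilon$ and the order function $r$ are chosen so that the perturbation does not destroy embeddedness of the end; making this precise (controlling $C^1$-asymptotics of $\tilde u$ near $p$ from the $C^0$-bound $|\tilde u-u|<\varepsilon$ together with vanishing to order $r(p)$) is the one genuinely technical step. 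The main obstacle is thus not the global existence part, which is handed to us by Theorem~\ref{th:intro-ML-CMI}, but verifying that the prescribed local ends survive the approximation with their asymptotic type intact so that ``finite total curvature'' and, for $n=3$, ``embedded'' can be read off at every puncture; everything else is a routine assembly of standard Weierstrass-representation constructions.
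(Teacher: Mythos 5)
Your overall strategy is the one the paper intends, and your proof is essentially correct; the paper treats this as an ``immediate corollary'' of Theorem~\ref{th:intro-ML-CMI}, and what you describe is precisely the immediate deduction. However, you have manufactured a difficulty at the final step that does not exist. You worry about ``controlling $C^1$-asymptotics of $\tilde u$ near $p$ from the $C^0$-bound $|\tilde u - u|<\varepsilon$'' in order to see that the end of $\tilde u$ at $p$ remains embedded for $n=3$. No such estimate is needed, and indeed no use of $\varepsilon$ or the order function $r$ is needed at all. The first (unquantified) conclusion of the theorem already gives that $\tilde u - u$ is harmonic across each $p\in E$, so $\partial(\tilde u - u)$ is holomorphic at $p$, hence $\partial\tilde u$ has a pole at $p$ of exactly the same order as $\partial u$. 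Choosing the local datum $u_p$ to be a catenoidal end (pole of order $2$), the Jorge--Meeks criterion (recalled in the paper right after the corollary) then gives embeddedness of the end of $\tilde u$ at $p$ directly from the pole order, with no asymptotic analysis whatsoever; and the same algebraic argument gives finite total curvature of the ends for any $n$, since $\partial\tilde u$ has an effective pole. Likewise, the remark about making the local $u_p$ an embedding for $n\ge 5$ is superfluous, since you are taking $\Lambda=\varnothing$ and the theorem itself already provides injectivity of $\tilde u$ for $n\ge 5$; and properness of the local ends requires no translation or homothety because a complete end of finite total curvature is automatically a proper map (as stated in the introduction). In short, your structure and the key observation ($\partial(\tilde u-u)$ holomorphic implies identical pole order) are exactly right; what needs to be removed is the perceived obstacle at the end, which the Jorge--Meeks characterisation disposes of outright.
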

Statement~(i) contributes to a better understanding of the Sullivan and Schoen-Yau conjectures concerning the conformal properties of proper minimal surfaces in $\R^3$; see~\cite{SchoenYau1997lectures,morales2003existence,AlarconLopez2012JDG} and also~\cite[\textsection 3.10]{alarcon2021minimal} for a discussion of these problems.
Recall that, by a result of Jorge and Meeks~\cite{JorgeMeeks1983T}, a complete minimal end of finite total curvature $u:\overline\D_*\to\R^3$ is embedded (on a neighbourhood of $0$) if and only if $\partial u$ has a pole at $0$ of order $2$. The special cases of the corollary when $E=\varnothing$ were established in~\cite{AlarconLopez2012JDG} and~\cite{alarcon2016embedded}, respectively.
Delving into the direction of statement~(ii), we record the following corollary to the effect that complete minimal ends of finite total curvature in $\R^n$, $n\ge 5$, are generically embedded.
%
%
\begin{corollary}\label{co:intro-embedded-ends}
Let $K$ be a smoothly bounded compact domain in an open Riemann surface, $\varnothing\neq E\subset\mathring K$ be a finite set, and $u:K\setminus E\to\R^n$, $n\ge 5$, be a complete conformal minimal immersion of finite total curvature. Then for any $\varepsilon>0$ and any $r\in\N$ there is a complete conformal minimal embedding of finite total curvature $\tilde u:K\setminus E\to\R^n$ such that $\tilde u-u$ is continuous on $K$, $|\tilde u-u|<\varepsilon$ on $K$ and $\tilde u-u$ vanishes to order $r$ at every point in $E$.
\end{corollary}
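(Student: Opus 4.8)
The plan is to derive this corollary directly from Theorem~\ref{th:intro-ML-CMI}, using the compact domain $K$ itself in the role of the neighbourhood $V$ and taking no interpolation nodes outside of $E$.

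First I would realize $K$ as a Runge compact set in an open Riemann surface. Writing $R$ for the open Riemann surface in which $K$ sits and fixing a smooth background metric on $R$, I would take $M$ to be the interior of the closed $\delta$-neighbourhood of $K$ for a small regular value $\delta>0$ of the distance function to $K$. Then $M$ is an open Riemann surface, $K\subset M$ is a smoothly bounded compact domain, and for $\delta$ small enough $M\setminus K$ is a thin open collar of $\partial K$, so no connected component of $M\setminus K$ is relatively compact in $M$; that is, $K$ is $\Ocal(M)$-convex. Thus $K$ is a locally connected, smoothly bounded, closed neighbourhood of the finite (hence closed discrete) set $E\subset\mathring K$ whose unique connected component is a Runge compact set—precisely the hypotheses imposed on $V$ in Theorem~\ref{th:intro-ML-CMI}.

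Next I would check the one additional hypothesis needed for the quantitative (``Furthermore'') part of that theorem, namely that $u\colon K\setminus E\to\R^n$ is a proper map. Each $p\in E$ is a complete end of finite total curvature of $u$, hence a proper end, so $|u(z)|\to\infty$ as $z\to p$; on the other hand $u$ is a bounded immersion near the compact boundary $\partial K$. Therefore, for every compact $C\subset\R^n$ the closed set $u^{-1}(C)\subset K\setminus E$ stays at positive distance from $E$ and is consequently compact, so $u$ is proper. I would then apply Theorem~\ref{th:intro-ML-CMI} with this $M$, with $V=K$, with $\Lambda=\varnothing$ (so the injectivity requirement on $u|_\Lambda$ is vacuous), with the given $\varepsilon>0$, and with the constant map $r$, obtaining a proper conformal minimal immersion $\hat u\colon M\setminus E\to\R^n$ which is injective because $n\ge 5$, such that $\hat u-u$ is continuous at each point of $E$, $|\hat u-u|<\varepsilon$ on $K$, and $\hat u-u$ vanishes to order $r$ at each point of $E$.

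It then remains to restrict and collect the conclusions for $\tilde u:=\hat u|_{K\setminus E}$. The pointwise statements ($\tilde u-u$ continuous on $K$, $|\tilde u-u|<\varepsilon$ on $K$, vanishing to order $r$ on $E$) are immediate. Since $\partial\hat u-\partial u$ is holomorphic near each $p\in E$ and vanishes there (because $r\ge 1$) while $\partial u$ has an effective pole at $p$, the $(1,0)$-differential $\partial\tilde u$ is meromorphic on $K$ with effective poles exactly at the points of $E$; equivalently, each point of $E$ is again a complete end of finite total curvature of $\tilde u$, so $\tilde u$ is a complete conformal minimal immersion of $K\setminus E$ of finite total curvature. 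Finally, repeating the properness argument of the previous paragraph with $\hat u$ in place of $u$ shows that $\tilde u$ is proper; being a proper injective continuous immersion, it is an embedding. I expect no serious obstacle here: the entire argument is bookkeeping on top of Theorem~\ref{th:intro-ML-CMI}, the only non-formal points being the properness of $u$ and of $\tilde u$ on $K\setminus E$ and the remark that vanishing of $\partial\hat u-\partial u$ to positive order at $E$ leaves the pole orders of $\partial u$ unchanged.
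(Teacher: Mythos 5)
Your proof is correct and takes essentially the same route as the paper intends (the corollary is presented as an immediate consequence of Theorem~\ref{th:intro-ML-CMI}): you shrink the ambient surface to a thin collar enlargement $M$ of $K$ so that $K$ becomes Runge in $M$, verify that the complete finite-total-curvature ends at $E$ make $u$ proper on $K\setminus E$, apply the ``Furthermore'' part of the theorem with $V=K$, $\Lambda=\varnothing$, constant order $r$, and $n\ge5$, and then restrict the resulting proper conformal minimal embedding of $M\setminus E$ to $K\setminus E$. The bookkeeping you supply — in particular that $\partial\tilde u-\partial u$ vanishing to positive order at $E$ preserves the pole orders of $\partial u$, so $\tilde u$ again has complete ends of finite total curvature — is exactly what is needed.
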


We emphasize that the approximation in this corollary is uniform everywhere on $K\setminus E$.
If the map $u|_\Lambda$ given in Theorem~\ref{th:intro-ML-CMI} is not injective, then the statement remains to hold except that $\tilde u$ cannot be chosen injective. Likewise, if $u:V\setminus E\to\R^n$ is not proper, then the statement holds except that $\tilde u$ cannot be a proper map. In this case we may choose $\tilde u:M\setminus E\to\R^n$ to be an almost proper map, hence a complete immersion.\footnote{A map $f:X\to Y$ between topological spaces is {\em almost proper} if the connected components of $f^{-1}(K)$ are all compact for every compact set $K\subset Y$. If $M$ is a smooth surface, then every almost proper immersion $M\to\R^n$ is complete by completeness of $\R^n$. Proper maps are almost proper.} Almost proper maps are in some sense the best class of maps $M\to\R^n$ that can hit an arbitrary countable set in $\R^n$. Thus, we obtain the following result.
%
%
\begin{corollary}\label{co:intro-dense}
Let $M$ be an open Riemann surface and $\varnothing\neq E\subset M$ be a closed discrete subset. For any integer $n\ge 3$ and any countable set $C\subset \R^n$ there is an almost proper (hence complete) conformal minimal immersion $u:M\setminus E\to\R^n$ such that every point in $E$ is an end of finite total curvature of $u$ and $C\subset u(M\setminus E)$; in particular, there is such an immersion $u$ whose image is everywhere dense: $\overline{u(M\setminus E)}=\R^n$. Furthermore, if $n\ge 5$, then there is an immersion $u$ with these properties which is in addition injective.
\end{corollary}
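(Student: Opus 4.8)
The plan is to deduce the corollary from a single application of the interpolation part of Theorem~\ref{th:intro-ML-CMI}, used in the \emph{almost proper} form recorded in the discussion following Corollary~\ref{co:intro-embedded-ends}, once a suitable local model whose image already contains $C$ is exhibited.

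Since $C$ is a set, enumerate it as $C=\{c_j:j\in J\}$ with $J=\varnothing$, $J=\{1,\dots,m\}$, or $J=\N$, and with the $c_j$ pairwise distinct. Choose a closed discrete subset $\Lambda=\{p_j:j\in J\}\subset M\setminus E$ of pairwise distinct points (a divergent sequence if $J=\N$); this exists because $M\setminus E$ is a noncompact surface. As $E\cup\Lambda$ is closed and discrete in $M$, fix pairwise disjoint smoothly bounded closed disks $W_p\ni p$ for $p\in E$ and $D_j\ni p_j$ for $j\in J$, each small enough to be Runge, and set $V=\bigcup_{p\in E}W_p\cup\bigcup_{j\in J}D_j$. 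Then $V$ is a locally connected, smoothly bounded, closed neighbourhood of $E$ whose connected components are Runge compact sets, and $\Lambda\subset V\setminus E$ is closed and discrete. Define the model $u:V\setminus E\to\R^n$ componentwise: on each $W_p\setminus\{p\}$ take the restriction of a fixed complete conformal minimal immersion of the punctured disk having an embedded end of finite total curvature at the puncture (e.g.\ a catenoidal end), placed so that $p$ is that puncture; on each disk $D_j$ take a translate of a fixed conformal minimal immersion of the disk chosen so that $u(p_j)=c_j$. By construction $u$ is a conformal minimal immersion whose restriction to each component of $V\setminus E$ has finite total curvature (trivially on the compact disks $D_j$), every point of $E$ is a complete end of finite total curvature of $u$, and $u|_\Lambda\colon p_j\mapsto c_j$ is injective because the $c_j$ are pairwise distinct.

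Now apply Theorem~\ref{th:intro-ML-CMI} — its interpolation part, in the variant (valid without assuming $u:V\setminus E\to\R^n$ proper) that produces an almost proper immersion — to $M$, $E$, $V$, $u$, with this $\Lambda$, with $r\equiv1$, and with any $\varepsilon>0$. This yields an almost proper (hence complete) full conformal minimal immersion $\tilde u:M\setminus E\to\R^n$ for which every point of $E$ is an end of finite total curvature of $\tilde u$ (because $\tilde u-u$ is harmonic at $E$ and $u$ has such ends there), which is injective when $n\ge5$ (using injectivity of $u|_\Lambda$), and for which $\tilde u-u$ vanishes to order $1$ at every point of $E\cup\Lambda$; in particular $\tilde u(p_j)=u(p_j)=c_j$ for all $j\in J$, so $C=\{c_j:j\in J\}\subset\tilde u(\Lambda)\subset\tilde u(M\setminus E)$. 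Taking $C$ to be a countable dense subset of $\R^n$ gives $\overline{\tilde u(M\setminus E)}=\R^n$, which proves the final assertion.

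Since Theorem~\ref{th:intro-ML-CMI} is taken as given, there is no essential obstacle; the only points needing care are the verification that $V$ meets the hypotheses of the theorem — possible precisely because $E\cup\Lambda$ is closed and discrete, so its points admit pairwise disjoint Runge disk neighbourhoods — and the observation that the \emph{almost} proper version is unavoidable: if $C$ accumulates somewhere in $\R^n$, in particular if $C$ is dense, then $u:V\setminus E\to\R^n$ is not proper, since $p_j\to\infty$ in $M$ while $c_j=u(p_j)$ stays bounded, and correspondingly no proper $\tilde u$ can hit such a $C$, its image being closed.
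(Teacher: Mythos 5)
Your argument is correct and follows exactly the route the paper indicates in the paragraph preceding the corollary: choose a closed discrete $\Lambda=\{p_j\}\subset M\setminus E$, prescribe $u(p_j)=c_j$ on small Runge disks around the $p_j$ (and catenoidal ends near $E$), and invoke the almost-proper variant of Theorem~\ref{th:intro-ML-CMI} with interpolation to order $1$ at $E\cup\Lambda$. The only thing I would tighten is the justification that your local model $u$ on $V\setminus E$ meets the hypotheses of the theorem — the completeness and finite-total-curvature conditions concern the punctured components $W_p\setminus\{p\}$, while the compact disks $D_j$ play no role in those hypotheses — but you implicitly note this (completeness and finite total curvature are trivial there), so the proof is essentially the paper's.
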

Corollary~\ref{co:intro-dense} is new even in case $E=\varnothing$. Several existence results for complete densely immersed minimal surfaces in $\R^n$ with arbitrary complex structure were provided by Alarc\'on and Castro-Infantes in~\cite{alarcon2018complete,alarcon2019interpolation}.

By a theorem of Huber~\cite{huber1957on} (see also~Chern-Osserman~\cite{ChernOsserman1967JAM}), if $R$ is a Riemann surface, possibly with nonempty boundary $bR\subset R$, and there is a complete conformal minimal immersion $R\to\R^n$ of finite total curvature, then $R$ is of {\em finite conformal type}, meaning that $R$ is conformally equivalent to a Riemann surface of the form $\wh R\setminus X$, where $\wh R$ is a compact Riemann surface, possibly with nonempty compact boundary $b\wh R\subset \wh R$, and $X\subset \wh R\setminus b\wh R$ is finite. If $M$ is an open Riemann surface, then a complete conformal minimal immersion $u:M\to\R^n$ is of {\em weak finite total curvature}, according to L\'opez~\cite{lopez2014exotic,lopez2014uniform}, if the restriction $u|_R:R\to\R^n$ is of finite total curvature for every region $R \subset M$ of finite conformal type.\footnote{A connected subset $R\subset M$ of a topological surface $M$ is a {\em region} if it is closed and, endowed with the induced topology, a topological surface with possibly nonempty boundary $bR\subset R$.} Note that if $M$ is of finite topology, then $u$ is of weak finite total curvature if and only if it is of finite total curvature, and in this case $M$ is a compact Riemann surface with finitely many punctures. By the aforementioned theorem of Huber, if an open Riemann surface $M$ admits a complete conformal minimal immersion $M\to\R^n$ of weak finite total curvature, then every region $R\subset M$ with compact boundary and finite topology must be of finite conformal type. Theorem~\ref{th:intro-ML-CMI} implies that the converse statement also holds; this is (iii)$\Rightarrow$(ii) in the following characterization result.
%
%
\begin{corollary} \label{co:intro-WFTC}
Let $M$ be an open Riemann surface and $n\ge 3$ be an integer. Then the following are equivalent.
\begin{itemize}
\item[\rm (i)] There exists a proper full conformal minimal immersion $M \to \R^n$ of weak finite total curvature.

\item[\rm (ii)] There exists a complete full conformal minimal immersion $M \to \R^n$ of weak finite total curvature.

\item[\rm (iii)] Every region $R\subset M$ with compact boundary and finite topology is of finite conformal type, that is, every annular end of $M$ is a puncture.
\end{itemize}
Furthermore, if $M$ is of infinite topology and $n\ge 5$, then either of the above conditions is equivalent to the following.
\begin{itemize}
\item[\rm (iv)] There exists a proper full conformal minimal embedding $M \to \R^n$ of weak finite total curvature.
\end{itemize}
\end{corollary}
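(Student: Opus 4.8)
The plan is to reduce everything to Theorem~\ref{th:intro-ML-CMI}. The implications (i)$\Rightarrow$(ii) and (iv)$\Rightarrow$(i) are immediate, because a proper immersion into $\R^n$ is complete and an embedding is an immersion, while (ii)$\Rightarrow$(iii) is Huber's theorem applied to the restriction $u|_R$ over a region $R\subset M$ with compact boundary and finite topology, precisely as recalled in the introduction. So the real content is (iii)$\Rightarrow$(i), together with (iii)$\Rightarrow$(iv) when $M$ has infinite topology and $n\ge 5$.

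Assume (iii). The idea is to realise $M$ as a punctured open Riemann surface and apply Theorem~\ref{th:intro-ML-CMI}. Let $E$ be the set of puncture ends of $M$; by (iii) these are exactly the annular ends of $M$. Capping off each of them with a point produces a Riemann surface $\wt M=M\cup E$; since a punctured disc has a single end, puncture ends accumulate neither at points of $M$ nor at one another, so $E$ is a closed discrete subset of $\wt M$, one has $M=\wt M\setminus E$, and the ends of $\wt M$ are the non-annular ends of $M$. Two degenerate cases are handled separately. If $M$ has finite topology, then (iii) forces $M=\wh M\setminus X$ for a compact Riemann surface $\wh M$ and a finite nonempty set $X=E$, and the existence of a complete — hence proper — full conformal minimal immersion $M\to\R^n$ of finite total curvature (which here is the same as weak finite total curvature) is classical; see \cite{alarcon2021minimal,alarcon2022algebraic}. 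If $M$ has infinite topology but $E=\varnothing$, then by (iii) every region of $M$ with compact boundary and finite topology is already compact, so \emph{every} proper full conformal minimal immersion $M\to\R^n$ is automatically of weak finite total curvature; such an immersion exists for every open Riemann surface by \cite{AlarconLopez2012JDG,alarcon2021minimal}, and for $n\ge 5$ there is moreover a proper full conformal minimal embedding $M\to\R^n$ by~\cite{alarcon2016embedded}, which already gives (iv).

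In the remaining case $M$ has infinite topology and $E\ne\varnothing$, so $\wt M$ is again an open Riemann surface. I would choose $V\subset\wt M$ to be a disjoint union of small smoothly bounded coordinate discs $\overline D_p$ centred at the points $p\in E$; each $\overline D_p$ is Runge in $\wt M$ because its complement is connected and noncompact, so $V$ satisfies the hypotheses of Theorem~\ref{th:intro-ML-CMI}. On each punctured disc $\overline D_p\setminus\{p\}$ I would put a standard complete conformal minimal immersion of finite total curvature whose $(1,0)$-differential extends meromorphically across $p$ with an effective pole, translating the pieces in $\R^n$ so that their images are pairwise disjoint and the resulting map $u\colon V\setminus E\to\R^n$ is injective when $n\ge 5$. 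Applying Theorem~\ref{th:intro-ML-CMI} to $(\wt M,E,V,u)$ yields a proper full conformal minimal immersion $\tilde u\colon M=\wt M\setminus E\to\R^n$ — an embedding if $n\ge 5$ — with $\tilde u-u$ harmonic at each $p\in E$; since $\partial\tilde u=\partial u+\partial(\tilde u-u)$ retains an effective pole at $p$, every point of $E$ is a complete end of finite total curvature of $\tilde u$. To see that $\tilde u$ is of weak finite total curvature I would use (iii) again: any region $R\subset M$ with compact boundary and finite topology is of finite conformal type, hence has finitely many ends, each a puncture end of $M$ and therefore a point of $E$; so $\tilde u|_R$ is smooth over a compact part of $R$ and has a finite-total-curvature end at each of its ends, whence $\int_R K\,dA>-\infty$. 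This gives (i), and (iv) when $n\ge 5$.

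I expect the main obstacle to be the end-theoretic bookkeeping around the construction of $\wt M$: one must check that $E$ is closed and discrete in $\wt M$, that $\wt M$ is open exactly when $M$ has infinite topology, and — crucially — that the ends of every finite-topology region of $M$ are among the points of $E$, so that the finite-total-curvature ends produced by Theorem~\ref{th:intro-ML-CMI} lie precisely where weak finite total curvature requires them. The finite-topology case is a genuine exception, since there $\wt M$ is compact and Theorem~\ref{th:intro-ML-CMI} does not apply; it rests instead on the classical construction of complete minimal surfaces of finite total curvature with prescribed finite conformal structure.
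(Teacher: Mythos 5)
Your proof is correct and follows essentially the same route as the paper: split off the finite-topology case, handle the easy implications via Huber--Chern--Osserman, and for (iii)\,$\Rightarrow$\,(i),(iv) in infinite topology write $M$ as $\wt M\setminus E$ for an open $\wt M$ and a closed discrete $E$, then apply Theorem~\ref{th:intro-ML-CMI} and use condition~(iii) again to confirm weak finite total curvature. One place where you are slightly more careful than the published proof: the paper allows $E$ to be empty and still invokes Theorem~\ref{th:intro-ML-CMI}, whose hypotheses actually require $E\neq\varnothing$; you correctly isolate the case of infinite topology with $E=\varnothing$, observe that there every region with compact boundary and finite topology is compact (so \emph{any} proper immersion is automatically of weak finite total curvature), and fall back on the pre-existing proper-immersion and embedding results of~\cite{AlarconLopez2012JDG,alarcon2016embedded}. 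You also spell out the construction of the initial data $u$ on $V$, which the paper leaves implicit. These are presentation-level refinements; the underlying argument is the same.
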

\begin{proof}
The result is known in the case when $M$ is of finite topology. Indeed, {\rm (i)}$\Leftrightarrow${\rm (ii)}$\Rightarrow${\rm (iii)} follows from the classical theorems of Huber~\cite{huber1957on}, Chern-Osserman~\cite{ChernOsserman1967JAM} and Jorge-Meeks~\cite{JorgeMeeks1983T}, while {\rm (iii)}$\Rightarrow${\rm (ii)} is ensured by a recent result of Alarc\'on and L\'opez~\cite[Theorem~1.2]{alarcon2022algebraic} (see also Pirola~\cite{pirola1998algebraic} for $n=3$).

Now assume that $M$ has infinite topology. The implications {\rm (iv)}$\Rightarrow${\rm (i)} if $n\ge 5$ and {\rm (i)}$\Rightarrow${\rm (ii)} are obvious, while {\rm (ii)}$\Rightarrow${\rm (iii)} is again due to Huber~\cite{huber1957on}.
Let us explain how \rm (iii) implies {\rm (iv)}, if $n\ge 5$, and \rm (i). Assuming \rm (iii), $M$ is biholomorphic to an open Riemann surface of the form $M'\setminus E$, where $M'$ is still an open Riemann surface (perhaps of finite topology) and $E\subset M'$ is a (possibly empty) closed discrete subset, such that every proper region in $M'\setminus E$ with compact boundary and finite topology is a finitely punctured compact Riemann surface with compact boundary whose punctures are interior points and belong to $E$ (they are the only ends of the region). Theorem~\ref{th:intro-ML-CMI} applied to $M'$ and $E$ then furnishes us with a proper full conformal minimal immersion (embedding if $n\geq 5$) $u:M'\setminus E\to \R^n$ 
such that every point in $E$ is a complete end of finite total curvature of $u$.
In these conditions, the restriction of the immersion to any proper region of $M'\setminus E$ as above is complete and of finite total curvature, hence $u$ is of weak finite total curvature.
\end{proof}
It is easily seen that on every open Riemann surface $M'$ there is a closed discrete subset $E$ such that $M=M'\setminus E$ satisfies condition~(iii) in Corollary~\ref{co:intro-WFTC} and hence, $M$ is the complex structure of a proper minimal surface in $\R^3$ of weak finite total curvature and of a properly embedded minimal surface in $\R^5$ of weak finite total curvature. There are several restrictions (even topological ones) on an open Riemann surface $M$ of finite conformal type (thus, conformally equivalent to a finitely punctured compact Riemann surface with empty boundary)  to be the complex structure of a complete embedded minimal surface of finite total curvature in $\R^3$; see e.g.~\cite[\textsection 3]{hoffman1997complete} or~\cite[\textsection 5]{LopezMartin1999}. It is an open question, likely very difficult, whether every such $M$ admits a complete conformal minimal embedding of finite total curvature into $\R^4$ or even into $\R^5$.
\subsection{The Mittag-Leffler theorem for directed meromorphic curves}\label{ss:1.2}
A conformal minimal immersion $M\to\R^n$ $(n\ge 3)$ of an open Riemann surface $M$ is locally on every simply connected domain in $M$ the real part of a holomorphic null curve into $\C^n$. Recall that a holomorphic immersion $F:M\to\C^n$ is {\em null} if it is directed by the null quadric
\begin{equation}\label{eq:null-quadric}
	{\bf A}=\{z=(z_1,\ldots,z_n)\in\C^n: z_1^2+\cdots+z_n^2=0\}\subset\C^n,
\end{equation}
meaning that the complex derivative $F'$ of $F$ with respect to any local holomorphic coordinate on $M$ assumes values in ${\bf A}_*={\bf A}\setminus\{0\}$. Equivalently, for any holomorphic 1-form $\theta$ vanishing nowhere on $M$ (such exists by~\cite{grauert1958analytische,grauert1957approximation}), the range of the holomorphic map $dF/\theta:M\to\C^n$ lies in ${\bf A}_*$. We refer to~\cite[\textsection 2]{alarcon2021minimal} for more details. 
More generally, assuming that $A\subset\C^n$ is a closed irreducible conical complex subvariety of $\C^n$ which is smooth away from $0$, a holomorphic immersion $F:M\to\C^n$ is {\em directed} by $A$, or an {\em $A$-immersion}, if its complex derivative $F'$ with respect to any local holomorphic coordinate on $M$ takes its values in $A_*=A\setminus\{0\}$. 
Every such subvariety $A$ is algebraic and is the common zero set of finitely many homogeneous holomorphic polynomials by Chow's theorem (see e.g.~\cite{chirka1989complex}), and it turns out that $A_*$ is the punctured cone on a connected submanifold $Y$ of $\C\P^{n-1}$.
That being so, holomorphic null curves are a special type of directed holomorphic immersions of open Riemann surfaces into $\C^n$. Directed holomorphic immersions were studied by Alarc\'on and Forstneri\v{c}~\cite{alarcon2014null}, who proved an Oka principle with Runge and Mergelyan approximation for holomorphic $A$-immersions, and that every holomorphic $A$-immersion can be approximated uniformly on compact sets by holomorphic $A$-embeddings under the assumption that  $A_*$ is an Oka manifold, which holds if and only if $Y$ is an Oka manifold~\cite[Proposition 4.5]{alarcon2014null}. 
Interpolation was added by Alarc\'on and Castro-Infantes in~\cite{alarcon2019interpolation}; we refer to~\cite{ForstnericLarusson2019CAG,Castro-InfantesChenoweth2020,AlarconLarusson2023} for further recent results in the subject.
The condition on the subvariety $A_*$ to be an Oka manifold arises naturally since holomorphic maps from Stein manifolds to Oka manifolds satisfy the Runge approximation and the Weierstrass interpolation theorems, and more generally, all forms of the Oka principle, in the absence of topological obstructions (see Section \ref{sec:prelim} for further discussion on the Oka property). These tools shall be extensively used in our proofs. The background on the classical Oka-Grauert theory along with the modern developments in Oka theory are presented in the comprehensive book~\cite{forstneric2017stein} by Forstneri\v{c}, and also in the surveys~\cite{ForstnericLarusson2011survey,forstneric2023recent}. 

The proof of the Mittag-Leffler theorem for conformal minimal surfaces by Alarc\'on and L\'opez in~\cite{alarcon2022algebraic} uses in a strong way the special geometry of the null quadric, as well as fairly technical results from the function theory of open Riemann surfaces. Our approach to the proof of Theorem~\ref{th:intro-ML-CMI} is different, relying mainly on modern Oka theory. This enables us to establish analogues for the more general family of directed holomorphic immersions, thereby extending the results from~\cite{alarcon2022algebraic} in a different direction. Given an open Riemann surface $M$, a closed discrete subset $\varnothing\neq E\subset M$, and a cone $A$ as above, we shall say that a holomorphic $A$-immersion $M\setminus E\to\C^n$ is a {\em meromorphic $A$-immersion} if it extends to $M$ as a meromorphic map with an effective pole at each point in $E$.
Here is our second main result.
%
%
\begin{theorem}
\label{th:intro-ML-DirMer}
Let $A\subset \C^n$ $(n\ge 3)$ be a closed irreducible conical subvariety which is not contained in any hyperplane, is smooth away from $0$, and such that $A_*=A\setminus\{0\}$ is an Oka manifold.
Let $M$ be an open Riemann surface, $\varnothing\neq E\subset M$ be a closed discrete subset, and $V\subset M$ be a locally connected smoothly bounded closed neighbourhood of $E$ all whose connected components are Runge compact sets. Let $F:V\setminus E\to\C^n$ be a holomorphic $A$-immersion extending meromorphically to $V$ with an effective pole at each point in $E$. Then, for any number $\varepsilon>0$, any closed discrete subset $\Lambda\subset M$ with $\Lambda\subset V\setminus E$, and any map $r:E\cup \Lambda\to\N$ there is a holomorphic $A$-immersion $\wt F:M\setminus E\to\C^n$ satisfying the following conditions.
\begin{enumerate}[\rm (i)]
\item $\wt F-F$ is continuous (hence holomorphic) on $V$. In particular, $\wt F$ extends meromorphically to $M$ with an effective pole at each point in $E$. 
\item $|\wt F-F|<\varepsilon$ everywhere on $V$.
\item $\wt F-F$ vanishes to order $r(p)$ at every point $p\in E\cup\Lambda$.
\end{enumerate}

Furthermore, the following hold.
\begin{enumerate}[\rm (i)] 
\item[\rm (iv)] If the map $F|_\Lambda:\Lambda\to\C^n$ is injective, then we can choose $\wt F:M\setminus E\to\C^n$ to be in addition injective.

\item[\rm (v)] If $A\cap\{z_1=1\}$ is an Oka manifold and the coordinate projection $\pi_1 \colon A \to \C$ onto the $z_1$-axis admits a local holomorphic section $\rho_1$ near $z_1=0$ with $\rho_1(0) \neq 0$, then we can choose $\wt F:M\setminus E\to\C^n$ to be in addition an almost proper map, and hence a complete immersion.

\item[\rm (vi)] If $A\cap\{z_i=1\}$ is an Oka manifold, the coordinate projection $\pi_i \colon A \to \C$ onto the $z_i$-axis admits a local holomorphic section $\rho_i$ near $z_i=0$ with $\rho_i(0) \neq 0$ for every $i=1,\ldots,n$, and $F:V\setminus E\to \C^n$ is a proper map, then we can choose $\wt F:M\setminus E\to\C^n$ to be in addition proper.
\end{enumerate}
\end{theorem}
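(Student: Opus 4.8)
The plan is to pass to derivatives, use a divisor trick to reduce to an Oka‑theoretic globalization problem for holomorphic maps into $A_*$, and then carry this out along an exhaustion of $M$.

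\emph{Reduction.} Fix a nowhere‑vanishing holomorphic $1$‑form $\theta$ on $M$. A meromorphic $A$‑immersion is the same thing as a meromorphic map $f\colon M\to A$ with $f(M\setminus E)\subset A_*$, an effective pole of some order $m_p\ge 2$ at each $p\in E$, and such that $f\theta$ has vanishing periods over $H_1(M\setminus E;\Z)$, so that $F=\int f\theta$ is well defined on $M\setminus E$. Since every divisor on an open Riemann surface is principal (Florack \cite{florack1948regulare}), I would fix once and for all a meromorphic function $h$ on $M$ with $\operatorname{div}(h)=-\sum_{p\in E}m_p[p]$, where $m_p\ge2$ is the pole order of $dF/\theta$ at $p$. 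The conical structure of $A$ then forces $g:=(dF/\theta)/h$ to extend to a \emph{holomorphic} map $g\colon V\to A_*$, and conversely any holomorphic map $\wt g\colon M\to A_*$ yields a meromorphic $A$‑immersion $\wt F:=\int h\wt g\,\theta$ as soon as the periods of $h\wt g\,\theta$ over $H_1(M\setminus E;\Z)$ vanish; since $\wt g$ is $A_*$‑valued and $h$ has a pole of order $m_p\ge2$ at $p$, the poles of $\wt F$ are automatically effective. Moreover, if $\wt g$ matches $g$ to sufficiently high order at $p\in E$, then $\wt F-F$ is holomorphic near $p$ with any prescribed vanishing order, and in particular the residue of $h\wt g\,\theta$ at $p$ vanishes; thus the loops around points of $E$ impose no constraint, and one is reduced to approximating $g$ near $V$ by a global holomorphic map $M\to A_*$ that matches prescribed high‑order jets at the points of $E\cup\Lambda$ and realizes prescribed values of a period/normalization map over a basis of $H_1(M;\Z)$ and at the interpolation points.

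\emph{The analytic core.} Next I would establish a Mergelyan‑type lemma for holomorphic maps into $A_*$ with approximation, interpolation, and period control: on a Runge (or admissible) compact $S\subset M$, given a holomorphic map $S\to A_*$, a finite jet datum, and finitely many loops and paths, one can approximate by a holomorphic map on a neighbourhood of $S$ into $A_*$ that matches the jets exactly and attains prescribed values of the period/normalization map. This rests on two ingredients available under our hypotheses: the Oka property of $A_*$ (Runge approximation and Weierstrass interpolation with jets for maps into $A_*$, in the absence of topological obstructions; see \cite{forstneric2017stein}), and the existence of \emph{period‑dominating holomorphic sprays} $\wt g_{\mathbf w}\colon S\to A_*$ with $\wt g_{\mathbf 0}=\wt g$, fixing the prescribed jets, whose period map has surjective differential at $\mathbf w=\mathbf 0$. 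The latter is precisely where the hypothesis that $A$ is not contained in any hyperplane of $\C^n$ enters, ensuring that the admissible variations of $\int h\wt g_{\mathbf w}\,\theta$ along each curve span $\C^n$, as in Alarc\'on--Forstneri\v{c} \cite{alarcon2014null}. I would then exhaust $M$ by smoothly bounded Runge compacts $M_1\subset M_2\subset\cdots$ with $V\subset M_1$, with $E\cup\Lambda$ missing every $bM_j$ and each point eventually interior, and with each $M_{j+1}$ obtained from $M_j$ by attaching a disc (a noncritical step, possibly absorbing a new point of $E\cup\Lambda$) or a band (a critical step, creating one new period to kill). Iterating the lemma — always fixing the jets at the absorbed points of $E\cup\Lambda$, and killing the accumulated periods and normalizations with the spray plus the implicit function theorem — produces maps $\wt F_j$ converging uniformly on compacts of $M\setminus E$ to a meromorphic $A$‑immersion $\wt F$ satisfying (i)--(iii).

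\emph{Refinements and main obstacle.} For (iv) I would interleave a general‑position step: after each stage, a small perturbation on the current piece preserving the fixed jets and the imposed periods removes the self‑intersections on $M_j$, using that holomorphic $A$‑immersions are approximable by $A$‑embeddings when $A_*$ is Oka \cite{alarcon2014null}; injectivity of $F|_\Lambda$ and the prescribed jets force the limit to be injective. For (v)--(vi) I would sharpen the induction so that at stage $j$ the map is pushed to have large modulus on the relevant part of $bM_j$: the hypotheses that $A\cap\{z_i=1\}$ is Oka and that $\pi_i$ admits a local section $\rho_i$ near $z_i=0$ with $\rho_i(0)\ne0$ supply, for each available coordinate $i$, deformations of $\wt g_j$ supported near $bM_j$ that raise $|\pi_i\circ\wt F_j|$ by a prescribed amount while keeping all earlier data (the directed analogue of the pushing arguments in the minimal surface literature). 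Using one coordinate makes the connected components of preimages of balls compact, giving almost properness — hence completeness, since almost proper immersions of surfaces are complete — for (v); using all $n$ coordinates together with properness of $F$ on $V\setminus E$, so that no part of $\wt F^{-1}(\text{ball})$ escapes through $bV$, upgrades this to properness for (vi). The step I expect to be the main obstacle is running period domination and interpolation simultaneously \emph{through the critical stages}: each new handle forces rebuilding the period‑dominating spray on the enlarged piece without disturbing the finitely many jets already pinned at $E\cup\Lambda$ and without spoiling the earlier normalizations, and one must re‑verify at each such step that ``$A$ not contained in a hyperplane'' still provides enough spray directions. Managing this bookkeeping — rather than any individual estimate — is the delicate part; the pushing deformations underlying (v)--(vi) add length but follow the now‑standard template once the directed deformations are in place.
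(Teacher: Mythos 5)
Your plan follows the same route as the paper: pass to $f=dF/\theta$, clear the poles by a suitable holomorphic (or meromorphic) factor so that the derivative becomes an $A_*$-valued holomorphic map, run a Mergelyan/Oka globalization with jet interpolation and period-dominating sprays, and carry this out along a Morse exhaustion with noncritical and critical steps, interleaving a general-position step for (iv) and pushing the modulus at boundaries for (v)--(vi). (The paper multiplies by a function $g$ with \emph{zeros} at $E$ per compact piece, while you divide by a globally chosen $h$ with \emph{poles} at $E$; this is a cosmetic difference and both work since $E$ is closed and discrete.)

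Two points are off or underspecified. First, you write ``exhaust $M$ by smoothly bounded Runge compacts $M_1\subset M_2\subset\cdots$ with $V\subset M_1$'' — this is impossible in general: $E$ is allowed to be an infinite closed discrete set, so $V$ has infinitely many components and is not contained in any compact set. The correct bookkeeping, which the paper carries out in the proof of Theorem \ref{th:Mittag-Leffler}, arranges the exhaustion so that each $M_j\setminus M_{j-1}$ absorbs exactly one new component $W_j$ of $V$, connects $W_j$ to $M_{j-1}$ by an arc to get an admissible set $L_j$, extends the derivative along the arc, and only then applies the semiglobal Mergelyan step; you gesture at this later with ``possibly absorbing a new point of $E\cup\Lambda$,'' which contradicts ``$V\subset M_1$,'' so the exhaustion needs to be set up from the start as a one-component-at-a-time process. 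Second, and more substantively, the pushing arguments for (v)--(vi) require a Mittag-Leffler theorem for meromorphic $A$-immersions \emph{with fixed component functions} (the paper's Theorem \ref{th:Mittag-Leffler-fixed}/Proposition \ref{prop:semiglob-fixed}): after raising $|\wt F_1|$ on one boundary arc you must raise $|\wt F_2|$ on another \emph{while leaving $\wt F_1$ unchanged}, and similarly through all $n$ coordinates for properness. This is exactly where the hypotheses that $A\cap\{z_i=1\}$ is Oka and that $\pi_i$ admits a local section near $0$ with nonzero value are used (via the Oka principle for sections of the ramified projection $\pi$). Your phrase ``while keeping all earlier data'' elides this genuinely new ingredient. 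Conversely, the ``bookkeeping through critical stages'' that you flag as the main obstacle is handled routinely via a Morse exhaustion function with at most one critical point per level interval, as in the standard directed-immersion literature; the fixed-components machinery, not the critical-stage bookkeeping, is where the paper had to do new work for (v)--(vi).
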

The punctured cone $A_*$ on any connected Oka submanifold $Y$ of $\C\P^{n-1}$ satisfies the assumptions in the theorem, so there are lots of examples of subvarieties $A=A_*\cup\{0\}\subset\C^n$ which the result applies to. The extra assumptions on $A$ in statements {\rm (v)} and {\rm (vi)} were already used in \cite[Theorems~7.7 and 8.1]{alarcon2014null} (see also \cite[Theorem~1.3]{alarcon2019interpolation}) and are technically required  in our proof to ensure the global properties. This is due to the need of a Mittag-Leffler-type theorem for meromorphic A-immersions with fixed components, which we establish in Theorem~\ref{th:Mittag-Leffler-fixed}. We note that the null quadric ${\bf A}$ \eqref{eq:null-quadric} directing null curves and minimal surfaces meets all these requirements (see \cite[Example~7.8]{alarcon2014null} or \cite[p.~570]{alarcon2019interpolation}).

Finally, let us record the following analogues of Corollaries~\ref{co:intro-embedded},~\ref{co:intro-embedded-ends}, and~\ref{co:intro-dense} for directed meromorphic curves. In case $E=\varnothing$ this follows from the results in~\cite{alarcon2014null,alarcon2019interpolation}.
%
%
\begin{corollary}\label{co:intro-A1}
If $A$ and $M$ are as in Theorem~\ref{th:intro-ML-DirMer}, then the following hold. 
\begin{enumerate}[\rm (i)]
\item If $A$ is as in Theorem~\ref{th:intro-ML-DirMer}-{\rm (vi)} and $\varnothing \neq E\subset M$ is closed and discrete, then there is a proper full holomorphic $A$-embedding $M\setminus E\to\C^n$ extending meromorphically to $M$ with an effective pole at each point in $E$. (See Proposition~\ref{co:ML-A-proper} for a more precise statement.)

\item Let $K\subset M$ be a smoothly bounded compact domain, $\varnothing \neq E\subset K$ be a finite set, and $F:K\setminus E\to\C^n$ be a holomorphic $A$-immersion extending meromorphically to $K$ with an effective pole at each point in $E$. Then, for any $\varepsilon>0$ and any $r>0$ there is a holomorphic $A$-embedding $\wt F:K\setminus E\to\C^n$ such that $\wt F-F$ is continuous on $K$, $|\wt F-F|<\varepsilon$ on $K$ and $\wt F-F$ vanishes to order $r$ at every point in $E$. In particular, $\wt F$ extends meromorphically to $M$ with effective poles at all points in $E$.

\item For any closed discrete subset $\varnothing \neq E\subset M$ there is a holomorphic $A$-immersion $F:M\setminus E\to\C^n$ extending meromorphically to $M$ with an effective pole at each point in $E$, such that $F(M\setminus E)$ is dense in $\C^n$. If in addition $A$ is as in Theorem~\ref{th:intro-ML-DirMer}-{\rm (v)}, then $F:M\setminus E\to\C^n$ can be chosen to be almost proper.
\end{enumerate}
\end{corollary}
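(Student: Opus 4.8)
The plan is to deduce Corollary~\ref{co:intro-A1} as a direct consequence of Theorem~\ref{th:intro-ML-DirMer}, treating the three statements separately but all by the same mechanism: specializing the data $(V,F,\Lambda,r,\varepsilon)$ in the theorem and invoking the appropriate refinement (iv), (v), or (vi).

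\emph{Statement (i).} Here $A$ is assumed to satisfy the extra hypotheses in Theorem~\ref{th:intro-ML-DirMer}-(vi). I would first produce \emph{some} meromorphic $A$-immersion on a neighbourhood of $E$ which is proper there, to serve as the input $F:V\setminus E\to\C^n$. To do this, choose for each point $p\in E$ a small coordinate disc; on the punctured disc one can write down an explicit proper $A$-immersion with an effective pole at $p$ using a local holomorphic section of one of the coordinate projections $\pi_i$ (exactly the type of local model used in the proof of Theorem~\ref{th:intro-ML-DirMer} to realize the poles), and scale/translate so that the finitely many (or, in the discrete case, locally finitely many) pieces have pairwise disjoint images going to infinity; properness of the resulting $F$ on $V\setminus E$ is automatic since $V$ deformation retracts onto $E$ and $F\to\infty$ at $E$. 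Then apply Theorem~\ref{th:intro-ML-DirMer} with this $F$, with $\Lambda=\varnothing$, with $r\equiv 1$ say, and with $\varepsilon=1$, invoking both (iv) (with $\Lambda=\varnothing$ the injectivity hypothesis is vacuous) and (vi) to obtain a proper, injective $\wt F:M\setminus E\to\C^n$ which extends meromorphically to $M$ with an effective pole at each point of $E$; fullness is part of the conclusion of the theorem. The ``more precise statement'' alluded to (Proposition~\ref{co:ML-A-proper}) is presumably just the same thing with prescribed jets at $E\cup\Lambda$ and $\varepsilon$-approximation retained, which falls out verbatim.

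\emph{Statement (ii).} Now $A$ only satisfies the basic hypotheses, $K$ is a smoothly bounded compact domain, $E\subset K$ is finite, and $F:K\setminus E\to\C^n$ is given. I would take $M=K$ (more precisely, a slightly larger open Riemann surface containing $K$ as a Runge compact domain, and then restrict at the end), take $V$ to be a small smoothly bounded closed neighbourhood of $E$ inside $\mathring K$ with the required structure, and apply Theorem~\ref{th:intro-ML-DirMer} with $\Lambda=\varnothing$, the prescribed $r$ and $\varepsilon$, invoking (iv). This yields $\wt F$ holomorphic and injective on $(K\setminus E)$, hence an $A$-embedding, continuous on $V$ with $|\wt F-F|<\varepsilon$ on $V$ and with $\wt F-F$ vanishing to order $r$ at each point of $E$. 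The only gap relative to the statement is that the theorem gives uniform closeness only on $V$, whereas the corollary asks for $|\wt F-F|<\varepsilon$ on all of $K$; this is handled in the usual way by first Mergelyan-approximating $F$ on the admissible set $K$ by a holomorphic $A$-immersion defined on a neighbourhood of $K$ (using the Oka principle for $A$-immersions from \cite{alarcon2014null}, available since $A_*$ is Oka), replacing $F$ by this approximant, and then applying the Mittag-Leffler theorem on $V$ with a smaller $\varepsilon$; the composition of the two small perturbations stays within $\varepsilon$ on $K$.

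\emph{Statement (iii).} For the density statement I would exploit (v): choose $F:V\setminus E\to\C^n$ as in (i) above (a local model with poles at $E$, now only needing the weaker hypotheses of (v)), fix a countable dense set $\{q_j\}_{j\ge1}\subset\C^n$, and choose a closed discrete $\Lambda=\{p_j\}\subset V\setminus E$ together with a map $r\equiv 1$; after perturbing $F$ near $\Lambda$ if necessary we may assume $F(p_j)$ is as close to $q_j$ as we like, but more robustly we simply note that interpolation is not even needed for mere density—what we want is that the image is dense, which follows because an almost proper immersion of an infinite-type surface can be arranged to pass through any prescribed countable set, exactly as in \cite{alarcon2019interpolation}. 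Concretely: apply Theorem~\ref{th:intro-ML-DirMer}-(v) to get an almost proper (hence complete) $\wt F:M\setminus E\to\C^n$; almost properness forces the image to be closed-exhaustion-compatible but a standard argument (enlarging the exhaustion and using Runge/Mergelyan approximation at each step to hit the next point $q_j$ to within $1/j$, while keeping the previously arranged interpolation conditions via jet-interpolation at a growing $\Lambda$) yields $\overline{\wt F(M\setminus E)}=\C^n$. When $A$ only satisfies the basic hypotheses, drop almost properness and argue the same way using only (iii) of the theorem for the successive jet-interpolation.

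\emph{Main obstacle.} The genuinely nontrivial content is already packaged in Theorem~\ref{th:intro-ML-DirMer}; the only real work in this corollary is (a) exhibiting a suitable \emph{seed} meromorphic $A$-immersion near $E$ that is proper there in cases (i) and (iii)—which hinges on having the local holomorphic sections $\rho_i$ of the coordinate projections to build an explicit local model with an effective pole—and (b) upgrading $V$-approximation to $K$-approximation in (ii) via a preliminary Mergelyan step. Of these, (a) is the crux: one must check that the locally finitely many local models around the points of $E$ can be chosen with pairwise disjoint images tending to infinity, so that their ``union'' is a genuine proper (resp.\ almost proper) map on $V\setminus E$ of the type the theorem accepts as input. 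Everything else is a routine concatenation of approximation steps.
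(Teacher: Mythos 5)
Your overall strategy—specialize Theorem~\ref{th:intro-ML-DirMer}—is the right one, and your treatment of~(i) is essentially sound. However, there is a genuine gap in your argument for~(ii), and some of your framing is off.

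\emph{The gap in~(ii).} You apply Theorem~\ref{th:intro-ML-DirMer} with $V$ a small neighbourhood of $E$ inside $\mathring K$, and acknowledge that the theorem then gives $|\wt F-F|<\varepsilon$ only on $V$, not on all of $K$. Your proposed fix—first Mergelyan-approximate $F$ on $K$ by some $F'$, then apply the Mittag-Leffler theorem to $F'$ on $V$—does not close the gap: the Mittag-Leffler theorem still controls $|\wt F-F'|$ only on $V$, and you have no bound whatsoever on $|\wt F-F'|$ over $K\setminus V$. So the composite estimate "$|\wt F-F|<\varepsilon$ on $K$" does not follow. The simplest correct repair within your framework is to take $V=K$ itself: $K$ is locally connected, smoothly bounded, a neighbourhood of $E$, and (after enlarging $M$ slightly) Runge, so it is an admissible choice of $V$ in Theorem~\ref{th:intro-ML-DirMer}, and then the theorem's approximation estimate holds on all of $K$ directly. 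Better still, statement~(ii) is \emph{exactly} Theorem~\ref{th:compact-embedding} of the paper (with $M$ there replaced by $K$), and that theorem needs neither the Oka hypothesis on $A_*$ nor any preliminary Mergelyan step. You should cite it rather than reconstruct it.

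\emph{Minor issues in~(i) and~(iii).} Your guess about what Proposition~\ref{co:ML-A-proper} says is wrong: it does not merely add jets and $\varepsilon$-approximation. It asserts the stronger conclusion that the pair $(F_1,F_2)$ is already a proper map into $\C^2$, under the \emph{weaker} hypothesis that only $\pi_1,\pi_2$ (not all $n$ projections) satisfy the Oka/section conditions. Your construction of a proper seed $F$ on $V\setminus E$ is right in spirit—it is indeed the crux, and the pole at each $p\in E$ plus condition~\eqref{eq:mu_j} over the components of $V$ are exactly what is needed to feed Theorem~\ref{th:Mittag-Leffler-properness}-(ii)—but your phrase "properness is automatic since $V$ deformation retracts onto $E$" glosses over the case of an infinite $E$: properness there is not automatic but is exactly the divergence condition~\eqref{eq:mu_j} that you must build into the seed. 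For~(iii), the recursive "growing $\Lambda$" argument you sketch is more complicated than necessary and somewhat circular (you invoke almost properness \emph{before} you have constructed $\wt F$); the cleaner route is a one-shot application: enumerate a dense $\{q_j\}\subset\C^n$, fix a closed discrete $\Lambda=\{p_j\}\subset V\setminus E$, build the seed $F$ with $F(p_j)=q_j$ (possible via Theorem~\ref{th:Mergelyan}), and then apply Theorem~\ref{th:Mittag-Leffler-properness}-(i) with $r\equiv1$ so that $\wt F(p_j)=q_j$ for all $j$, giving dense image together with almost properness.
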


Oka manifolds are special examples of the recently introduced class of Oka-1 manifolds: complex manifolds which are universal targets of holomorphic maps from open Riemann surfaces; see~\cite{alarcon2025oka1,ForstnericLarusson2025oka1} for a precise definition and more information. It remains an open question whether our results (or those in~\cite{alarcon2014null}) remain to hold true when the directing cone $A_*$ is Oka-1 but not Oka.


\subsection*{Organization of the paper} 
We introduce the notation and the relevant definitions in Section~\ref{sec:prelim}.
In Section~\ref{sec:Mergelyan theorem}, we obtain a semiglobal Mergelyan-type theorem for directed meromorphic immersions with approximation and interpolation (see Theorem~\ref{th:Mergelyan} and Proposition~\ref{prop:semiglob}). This is our main technical result and the key tool to be exploited in the rest of the paper.
On the other hand, we prove in Section~\ref{sec:desingularizing} a desingularization or general position lemma for meromorphic $A$-immersions.
By combining these results in a recursive procedure, we establish Theorem~\ref{th:intro-ML-DirMer} (the Mittag-Leffler theorem for directed meromorphic immersions) in Sections~\ref{sec:Mittag-Leffler} to~\ref{sec:properness}; see the more precise version in Theorem~\ref{th:Mittag-Leffler-properness}. 
We point out that Theorem~\ref{th:intro-ML-CMI} on minimal surfaces is granted by a very minor modification of the proof of (the somehow more general) Theorem~\ref{th:intro-ML-DirMer}; one just has to choose the directing cone to be the null quadric in $\C^n$ and ignore the imaginary parts of the periods in the construction. This connection between minimal surfaces and directed immersions is already well understood (see e.g.~\cite[\textsection 7]{alarcon2019interpolation}) and we shall not include the details.


\section{Notation and preliminaries}\label{sec:prelim}

We write $\N=\{1,2,\ldots\}$ and $\Z_+=\N\cup\{0\}$. 
Let $|\cdot|$ denote the Euclidean norm in $\R^n$ for $n\in\N$. Given a compact topological space $K$ and a continuous map $f:K\to\R^n$, we mean by $\|f\|_K$ the maximum norm of $f$ on $K$. 
Given $f,g\colon K  \to \R^n$, we say that $f$ approximates $g$ on $K$, and write $f \approx g$, if $||f-g||_{K}<\varepsilon$ for some $\varepsilon>0$ which is sufficiently small according to the argument.
Throughout the paper, we assume that surfaces are connected and $n\in\N$, $n\geq 3$.

Let $A \subset \C^n$ be a closed irreducible conical complex subvariety of $\C^n$ which is not contained in any hyperplane and is smooth away from $0$.\footnote{A complex subvariety $A \subset \C^n$ is {\em conical} if $\lambda A=A$ for all $\lambda \in \C\setminus \{0\}$.} We denote by $A_{*}:=A\setminus \{0\}$ the punctured subvariety. So, throughout the paper, we shall assume that $A_*$ is smooth and connected.

\begin{definition} \label{def:mero A-imm}
Given an open Riemann surface $M$ and a closed conical complex subvariety $A \subset \mathbb{C}^n$, a holomorphic map $F: M \to \mathbb{C}^n$ is an {\em $A$-immersion} if its complex derivative $F'$ on $M$ with respect to any local holomorphic coordinate on $M$ assumes values in $A_{*}=A\setminus \{0\}$ (see~\cite[Definition~2.1]{alarcon2014null}). Given a closed discrete subset $E \subset M$, by a {\em meromorphic $A$-immersion} $M\setminus E\to\C^n$ we mean a holomorphic $A$-immersion $M\setminus E\to\C^n$ extending to $M$ as a meromorphic map with effective poles at all points in $E$. We denote by
\[
	\mathscr{I}_A(M|E)	
\]
the space of all meromorphic $A$-immersions $M\setminus E\to \C^n$.
An injective meromorphic (or holomorphic) $A$-immersion is said to be a \emph{meromorphic} (or \emph{holomorphic}) \emph{$A$-embedding}.
\end{definition}

Definition~\ref{def:mero A-imm} extends to the case when $M$ is a compact bordered Riemann surface with smooth boundary $bM \subset M$, $E\subset \mathring M= M\setminus bM$ is a finite set, and the map $F \colon M\setminus E \to \C^n$ is continuously differentiable on $M\setminus E$ and meromorphic in $\mathring M $.

Let $F = (F_1,\dots,F_n) \colon M \to \C^n$ be a meromorphic map which is holomorphic on $M\setminus E$ and has effective poles at all points in $E$. Fix a nowhere vanishing holomorphic $1$-form $\theta$ on $M$ (see~\cite{gunning1967immersion}) and write $dF_j=f_j\theta$, $j=1,\dots,n$. 
Then the map $F$ is a meromorphic $A$-immersion $M\setminus E\to\C^n$ if and only if its derivative $f=dF/\theta=(dF_1/\theta,\dots,dF_n/\theta):M\setminus E\to\C^n$ has the range in $A_{*}$. On the other hand, given a meromorphic map $f=(f_1,\dots,f_n) \colon M \to A_{*}$ with effective poles in $E$, the vectorial $1$-form $f\theta = (f_1\theta,\dots,f_n\theta)$ integrates  to a meromorphic $A$-immersion $F: M\setminus E \to \C^n$ in $\mathscr{I}_A(M|E)$  if and only if $f\theta$ is holomorphic and exact on $M\setminus E$. The corresponding meromorphic $A$-immersion is then obtained by
\begin{equation*}
F(p) = F(p_0) + \int_{p_0}^{p} f\theta, \quad p\in M,
\end{equation*}
where $p_0 \in M\setminus E$ and $F(p_0)\in\C^n$ are arbitrary initial point and condition.

\begin{definition} \label{def:full}
Let $A$ be as in Definition~\ref{def:mero A-imm}, $M$ be either an open Riemann surface or a compact bordered Riemann surface, and $\theta$ be a nowhere vanishing holomorphic $1$-form on $M$. A holomorphic $A$-immersion $F \colon M \to \C^n$ is \emph{full} if, setting  $f=dF/\theta$, the $\C$-linear span of the tangent spaces $T_{f(x)}A, \ x\in M$, equals $\C^n$.
{\em Full} meromorphic $A$-immersions are defined analogously.
\end{definition}

Given complex manifolds $X,\ Y$ and a subset $M \subset X$, we denote by $\mathscr{C}^0(M,Y)$ the space of continuous maps $M\to Y$, and by $\mathscr{O}(M,Y)$ the space of holomorphic maps from some neighbourhood of $M$ in $X$ to $Y$. If $X$ is a Riemann surface, $Y\subset\C^n$ is a submanifold and $E\subset \mathring M$ is a finite subset, then 
\begin{equation}\label{eq:Oinfty}
	\mathscr{O}_{\infty}(M|E,Y)
\end{equation} 
denotes the space of meromorphic maps from some neighbourhood of $M$ in $X$ to $Y$ with effective poles exactly at points in $E$. If we do not specify the polar set of meromorphic maps in the latter space, we shall write $\mathscr{O}_{\infty}(M,Y)$ instead.
Moreover, given a closed subset $K \subset X$, we denote by $\mathscr{A}(K,Y) = \mathscr{C}^0(K,Y) \cap \mathscr{O}(\mathring K,Y)$ the space of all continuous maps $K\to Y$ which are holomorphic in the interior $\mathring K \subset K$ of $K$. Similarly, for a finite subset $E \subset \mathring K$, 
\begin{equation}\label{eq:Ainfty}
	\mathscr{A}_{\infty}(K|E,Y)
\end{equation} 
denotes the space of meromorphic maps $\mathring K \to Y$ with effective poles exactly at points in $E$ that are continuous on $K\setminus E$.
When $Y=\C$, we shall omit writing the latter in the corresponding spaces of functions, so we have the spaces $\mathscr{C}^0(M)$, $\mathscr{O}(M)$, $\mathscr{O}_{\infty}(M|E)$, $\mathscr{A}(K)$ and $\mathscr{A}_{\infty}(K|E)$.

We denote by
\begin{equation*}
\textrm{Div}(X) = \Big\{ \prod_{i=1}^\mathfrak{n} p_{i}^{s_i} \colon p_i \in X, \ s_i \in \Z, \ \mathfrak{n} \in \N \Big\}
\end{equation*}
the free commutative group of finite divisors of a set $X$. (Note that we use multiplicative notation.)
Given two divisors $D_1, D_2 \in \textrm{Div}(X)$, we write $D_1 \geq D_2$ whenever the divisor $D_1D_2^{-1}$ is of the form 
$D_1D_2^{-1} = \prod_{i=1}^\mathfrak{n} p_{i}^{s_i}$ with $p_i \in X$, $s_i \geq 0$, $\mathfrak{n} \in \N$. In such case, we say that $D_1D_2^{-1} = D$ is an \emph{effective divisor}.
Moreover, the set $\supp(D) = \{p_i \colon s_i \neq 0 \}$ is called the \emph{support} of $D= \prod_{i=1}^\mathfrak{n} p_{i}^{s_i} \in \textrm{Div}(X)$.

Assume that $M$ is a Riemann surface, $K\subset M$ is a compact subset and  $f \in \mathscr{O}_{\infty}(K)$, $f\not\equiv 0$, is a meromorphic function on a neighbourhood of $K$. Then $(f) \in \textrm{Div}(K)$ denotes the divisor of $f$ in $K$, that is, the quotient of zeros of $f$ by poles of $f$ (counted with multiplicities).
Furthermore, given an effective divisor $D \in \textrm{Div}(\mathring K)$ we write 
\begin{gather}\label{eq:OD(K)}
\mathscr{O}_{D}(K) := \{ f\in \mathscr{O}(K) \colon (f) \geq D \}, \quad
\mathscr{A}_{D}(K) := \mathscr{A}(K) \cap \mathscr{O}_{D}(K'),
\end{gather}
where $K' \subset \mathring K$ is any compact neighbourhood of $\supp(D)$.

\begin{definition} \label{def: admissible set}
(See~\cite[Definition~1.12.9]{alarcon2021minimal}.)
Given an open Riemann surface $M$, an \emph{admissible set} in $M$ is a compact set $S=K\cup \Gamma$, where $K$ is a finite (or possibly empty) union of pairwise disjoint compact domains with piecewise smooth boundaries in $M$ and $\Gamma = \overline {S\setminus K}$ is a finite (or possibly empty) union of pairwise disjoint smooth Jordan arcs and closed Jordan curves intersecting $K$ only at their endpoints (or not at all) and such that their intersections with the boundary $bK$ of $K$ are transverse.
\end{definition}

\begin{remark}\label{rem:Admissible-A}
If $M$ is an open Riemann surface and $E \subset \mathring S=\mathring K$ is a finite subset of an admissible set $S=K\cup \Gamma\subset M$, then the notion of a meromorphic $A$-immersion (see~Definition~\ref{def:mero A-imm}) extends to maps $F\colon S\setminus E \to \C^n$ of class $\mathscr{C}^1(S\setminus E)$ that satisfy $F|_{\mathring S\setminus E}\in\mathscr{I}_A(\mathring S|E)$ and the complex derivative $F'|_{\Gamma}$ with respect to any local holomorphic coordinate on $\Gamma$ assumes values in $A_*$. As before, we write $F \in \mathscr{I}_A(S|E)$.
\end{remark}

\begin{definition} \label{def: skeleton}
Let $S=K\cup \Gamma \subset M$ be a connected admissible set and $Q\subset S$ be a finite set. A {\em skeleton of $S$ based at $Q$} is a finite collection $\{C_j: j=1,\ldots,l\}$ of smooth oriented Jordan arcs in $S$ such that the following hold.
\begin{enumerate}[\rm ({S}1)]
\item $C=\bigcup_{j=1}^l C_j$ is Runge on a neighbourhood of $S$ and a strong deformation retract of $S$.

\item If $i,j\in\{1,\ldots,l\}$, $i\neq j$, and $C_i\cap C_j\neq\varnothing$, then $C_i\cap C_j=\{p_{i,j}\}$ where $p_{ij}$ is a common endpoint of $C_i$ and $C_j$. 

\item Every point in $Q$ is an endpoint of an arc in $\{C_j: j=1,\ldots,l\}$.
\end{enumerate}
\end{definition}

Given a connected admissible set $S$, one can always find skeletons of $S$ based at any finite set $Q\subset S$ as in the definition. 
For a detailed construction of a family of such curves we refer to the proof of~\cite[Proposition~3.3.2]{alarcon2021minimal}. 
We emphasize that by property $\rm (S1)$, $C$ is Runge in $M$ whenever $S$ is so.
Recall that a compact set $K$ in a Riemann surface $M$ is \emph{holomorphically convex} or a \emph{Runge set} in $M$ if and only if $K$ has no holes in $M$ (see~\cite[Lemma~1.12.3]{alarcon2021minimal}).

Let $M$ be an open Riemann surface, $S\subset M$ be a connected Runge admissible set (that is, $S$ is an admissible set which is in addition a Runge subset of $M$), and $E \subset \mathring S, \ \Lambda \subset S$ be disjoint finite subsets. Also let $A \subset \C^n$ be a closed irreducible conical complex subvariety and $\theta$ be a nowhere vanishing holomorphic $1$-form on $M$.
Assume that $\mathscr{C} = \{C_1, \dots, C_l\}$ is a skeleton of $S$ based at $E \cup \Lambda$ and set $C:=\bigcup_{j=1}^{l}C_j$ (see~Definition~\ref{def: skeleton}).
For any map $f\in \mathscr{C}^{0}(C\setminus E,A_*)$ we define the family of maps
\begin{equation} \label{eq:C^0(C,f)}
\mathscr{C}^{0}(C,f) := \left\{h\in \mathscr{C}^{0}(C\setminus E,A_*)\colon h-f\in \mathscr{C}^{0}(C,\C^{n}) \right\}
\end{equation}
and the \emph{period map}
\begin{eqnarray} \label{eq:period map}
\mathscr{P} & = & \left(\mathscr{P}_1, \dots, \mathscr{P}_l \right) \colon \mathscr{C}^{0}(C,f) \longrightarrow (\C^{n})^{l} \nonumber \\
\mathscr{P}(h) & := & \left(\int_{C_{j}}(h-f)\theta \right)_{j=1,\dots,l}, \quad h\in \mathscr{C}^{0}(C,f).
\end{eqnarray}

As already mentioned in the introduction, a special class of complex manifolds, called Oka manifolds, plays an important role in the results of the present paper. They were firstly introduced by Forstneri\v{c} in \cite{forstneric2009oka}, following the works by Oka, Grauert, and Gromov on the classical Oka-Grauert principle~\cite{gromov1989oka}. 
We say that a complex manifold $X$ is an \emph{Oka manifold} if every holomorphic map $K \to X$ from (a neighbourhood of) any compact convex set $K \subset \C^n$ for any $n\in\N$ can be approximated uniformly on $K$ by entire maps $\C^n \to X$ \cite[Definition~5.4.1]{forstneric2017stein}. The main result in this theory is that maps from a Stein manifold to an Oka manifold satisfy all equivalent forms of the Oka principle \cite[Theorem 5.4.4]{forstneric2017stein}. This includes the {\em Oka property with approximation and interpolation} to the effect that given a compact Runge set $K$ in a Stein manifold $M$ and an Oka manifold $X$, every continuous map $f:M\to X$ which is holomorphic on a neighbourhood of $K$ can be approximated uniformly on $K$, and interpolated to any given finite order on a finite set in $K$, by holomorphic maps $M\to X$ that are homotopic to $f$. This shall be a crucial tool in our method of proof.
  

\section{The Mergelyan theorem for meromorphic $A$-immersions} \label{sec:Mergelyan theorem}

In this section we establish Mergelyan and semiglobal approximation results for meromorphic $A$-immersions; see Theorem~\ref{th:Mergelyan} and Proposition~\ref{prop:semiglob}. The former, which is used to prove the latter under extra assumptions, can be seen as our main new technical tool. Oka theory is not required for this task, so the submanifold $A_*$ in Theorem \ref{th:Mergelyan} need not be Oka. On the other hand, we provide a similar result of semiglobal nature in Proposition \ref{prop:semiglob}, under the extra assumption that $A_*$ is Oka.

\begin{theorem} \label{th:Mergelyan}
Let $M$ be an open Riemann surface, $S=K\cup \Gamma$ be an admissible set in $M$, and $E \subset \mathring S$, $\Lambda \subset S$ be disjoint finite subsets.
Let $A \subset \C^{n}$ $(n\geq 3)$ be a closed irreducible conical subvariety which is not contained in any hyperplane and is smooth away from $0$, and let $F \in \mathscr{I}_A(S|E)$.
Then, for any $\varepsilon>0$ and any $r\in \Z_+$ there exist an open neighbourhood $\widetilde{S} \subset M$ of $S$ and a full meromorphic $A$-immersion $G\in\mathscr{I}_A(\widetilde S|E)$ such that the following are satisfied.
\begin{itemize}
\item[\rm (i)] $G-F$ is continuous on $S$, vanishes on $\Lambda\setminus\mathring S$ and vanishes to order $r$ on $E \cup (\Lambda\cap \mathring S)$.

\item[\rm (ii)] $||G-F||_S<\varepsilon$.
\end{itemize}
\end{theorem}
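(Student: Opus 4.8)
The plan is to reduce the statement about meromorphic $A$-immersions to a statement about $A_*$-valued maps (the derivatives), work on a skeleton $\mathscr{C}$ of $S$ based at $E\cup\Lambda$, and then correct periods. First I would fix a nowhere-vanishing holomorphic $1$-form $\theta$ on $M$ and write $f=dF/\theta\in\mathscr{C}^1(S\setminus E,A_*)$, a meromorphic map with effective poles exactly at $E$; the immersion $F$ is recovered by integration, so the task becomes: approximate $f$, uniformly on $S$, by a meromorphic map $g\colon\widetilde S\to A_*$ with effective poles at $E$, such that $g\theta$ is exact on $\widetilde S\setminus E$, $g-f$ vanishes to order $r$ on $E\cup(\Lambda\cap\mathring S)$ and vanishes on $\Lambda\setminus\mathring S$, and such that the resulting $G$ is full. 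Because $S$ deformation-retracts onto the skeleton $C=\bigcup C_j$, exactness of $g\theta$ on a neighbourhood of $S$ is equivalent to the vanishing of the period map $\mathscr{P}$ from \eqref{eq:period map}; so the whole difficulty is to produce a suitable $A_*$-valued approximant on $C$ (hence on a neighbourhood of $S$ by a standard Mergelyan argument for $A_*$-valued maps on admissible sets, cf.~\cite[\textsection 3]{alarcon2014null} or~\cite[Chapter 3]{alarcon2021minimal}) with controlled periods.

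The core is therefore a \emph{period-domination} construction. Pick a base point $p_0$ on $C$ away from $E\cup\Lambda$ and, using that $A_*$ is a connected manifold not contained in any hyperplane and that it is the smooth locus of a cone, choose finitely many "spray directions": smooth maps $C\setminus E\to A_*$, or rather a period-dominating spray $g_{\bf t}$ with $g_{\mathbf 0}$ close to $f$, obtained by composing $f$ with flows of holomorphic vector fields tangent to $A_*$ (these exist by irreducibility and the conical structure — this is exactly the mechanism in~\cite[Lemma~5.1 and \textsection 3]{alarcon2014null}). The condition that $A$ is not contained in a hyperplane guarantees that these tangential perturbations span $\C^n$ at a generic point, so the differential of $\mathbf t\mapsto\mathscr P(g_{\mathbf t})$ at $\mathbf 0$ is surjective onto $(\C^n)^l$; by the implicit function theorem one then solves $\mathscr P(g_{\mathbf t})=0$ for small $\mathbf t$, obtaining $g$ with vanishing periods, still close to $f$, and still with effective poles exactly at $E$ (the perturbations are holomorphic near $E$, so they don't create or kill poles). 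Fullness is generic among such perturbations and can be arranged by a further small deformation, as in~\cite[\textsection 3]{alarcon2014null}; the interpolation conditions on $E\cup\Lambda$ are imposed by multiplying the perturbation terms by functions vanishing to the required order there (using the spaces $\mathscr O_D$, $\mathscr A_D$ from \eqref{eq:OD(K)}), which does not destroy period-domination if the vanishing locus is chosen disjoint from the points where the spray is dominating.

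The step I expect to be the main obstacle is handling the \emph{poles} cleanly throughout the period-domination argument: the derivative map $f$ is only meromorphic, the spray $g_{\mathbf t}$ must keep effective poles of the same order exactly at $E$ while being a genuine $A_*$-valued (hence finite, nonzero) map on $C\setminus E$, and the period integrals $\int_{C_j}(g_{\mathbf t}-f)\theta$ must remain finite — which forces the arcs $C_j$ to avoid $E$ and forces $g_{\mathbf t}-f$ to be holomorphic across $E$, i.e. the perturbation must be designed so that the principal parts of $f$ at points of $E$ are left untouched. Concretely this means working in the affine space $f+\mathscr O(\cdot,\C^n)$ near $E$ and on compact subarcs away from $E$, and patching; once the bookkeeping of divisors is set up (via $\mathscr I_A(S|E)$ and the divisor notation of Section~\ref{sec:prelim}), the analytic content is the by-now standard Mergelyan-with-period-domination scheme for directed immersions, so I would present the pole bookkeeping carefully and then invoke that scheme.
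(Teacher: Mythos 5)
Your proposal is correct and follows essentially the same scheme as the paper: pass to $f=dF/\theta$, reduce exactness to vanishing of periods over a skeleton $\mathscr C$ based at $E\cup\Lambda$, build a period-dominating spray from flows of holomorphic vector fields tangential to $A$, arrange fullness by a preliminary small deformation, and impose interpolation by multiplying the perturbation data by functions vanishing to high order on $E\cup\Lambda$. The one place where the paper is cleaner than your ``work in the affine space $f+\mathscr O$ near $E$ and patch'' plan is the pole bookkeeping you flagged as the main obstacle: the paper exploits conicality of $A$ by choosing $g\in\mathscr O(M)$ with divisor $(g)=\prod_{p\in E}p^{o(p)}$ and passing to $\tilde f_0=fg\in\mathscr A(S,A_*)$ (still $A_*$-valued since $A_*$ is $\C^*$-invariant), running the entire spray and period argument on $\tilde f_0$ with perturbation functions $h_{i,j}$ vanishing to order $r_0+1\ge r+1+\sum_{p\in E}\sum_i\mathrm{Ord}_p(f_i)$, and only then dividing by $g$ to restore the poles; this single multiplicative normalization replaces the patching you anticipated and automatically keeps the principal parts at $E$ intact.
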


Recall the notation $\mathscr{I}_A(S|E)$ in Remark~\ref{rem:Admissible-A} and Definition~\ref{def:mero A-imm}.
The following result extends \cite[Theorem~7.2]{alarcon2014null} to the meromorphic framework.

\begin{proposition} \label{prop:semiglob}
Let $M$, $S$, $E$, $\Lambda$ and $A\subset \C^n$ ($n\geq 3$) be as in Theorem~\ref{th:Mergelyan} and assume that $S$ is Runge in $M$ and $A_{*}$ is an Oka manifold. 
Let $r\in \Z_+$ and $\varepsilon>0$.
Given a meromorphic $A$-immersion $F \in \mathscr{I}_A(S|E)$, there exists a full meromorphic $A$-immersion $G \in \mathscr{I}_A(M|E)$ satisfying the following conditions.
\begin{itemize}
\item[\rm (i)] The difference $G-F$ extends continuously to $S$, vanishes on $\Lambda \setminus \mathring S$ and vanishes to order $r$ on $E \cup (\Lambda \cap \mathring S)$.

\item[\rm (ii)] $||G-F||_S<\varepsilon$.
\end{itemize}
\end{proposition}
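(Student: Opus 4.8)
The plan is to derive Proposition~\ref{prop:semiglob} from Theorem~\ref{th:Mergelyan} by a standard Runge-exhaustion argument, using the Oka property of $A_*$ to pass from approximation on admissible (hence compact) sets to a global meromorphic $A$-immersion on all of $M$, while controlling periods at each step.

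\textbf{Step 1: Normal exhaustion subordinate to $S$.} Since $S$ is a Runge admissible set in the open Riemann surface $M$, I would first choose a smooth exhaustion $S=S_0\subset S_1\subset S_2\subset\cdots$ of $M$ by connected Runge admissible sets (or even smoothly bounded Runge compact domains after the first step), with $S_j\subset\mathring S_{j+1}$ and $\bigcup_j S_j=M$, arranged so that each $S_{j+1}$ is obtained from $S_j$ by attaching either a small topologically trivial handle, a small disc, or an arc — the usual Charathéodory/Mergelyan-style subdivision of an exhaustion (cf.\ the setup in~\cite{alarcon2021minimal}, \textsection 3 and~\cite{alarcon2014null}). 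The finite sets $E$ and $\Lambda$ are contained in $S_0$; keep $E\subset\mathring S_j$ for all $j$. Choose a nowhere vanishing holomorphic $1$-form $\theta$ on $M$ and fix a homology basis; all periods below are taken with respect to $\theta$.

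\textbf{Step 2: Inductive construction.} Set $G_0:=F\in\mathscr{I}_A(S|E)$ and fix a summable sequence $\varepsilon_j>0$ with $\sum_j\varepsilon_j<\varepsilon$. Inductively, given a full meromorphic $A$-immersion $G_j\in\mathscr{I}_A(S_j|E)$, apply Theorem~\ref{th:Mergelyan} to the admissible set $S_{j+1}$, the immersion $G_j$ (which lies in $\mathscr{I}_A(S_{j+1}'|E)$ for a slightly shrunk neighbourhood; alternatively first Mergelyan-approximate $G_j$ on $S_j$ as part of the data on $S_{j+1}$), the finite sets $E$ and $\Lambda$, the tolerance $\varepsilon_{j+1}$, and the jet order $r$ — but here is the one extra ingredient: I need the new immersion to not only approximate $G_j$ but to have the \emph{same periods} as $G_j$ over the (fixed) homology basis of $S_j$, so that the limit integrates globally. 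This is exactly the role of the Oka hypothesis on $A_*$: using that holomorphic maps into the Oka manifold $A_*$ satisfy the Oka principle with approximation and interpolation (and the period-dominating spray construction as in~\cite[\textsection 7]{alarcon2014null} / the proof of~\cite[Theorem~7.2]{alarcon2014null}), one can correct the period discrepancy produced by extending across the new handle in $S_{j+1}$. Concretely, one applies Theorem~\ref{th:Mergelyan} to get a full meromorphic $A$-immersion $H_{j+1}$ close to $G_j$ with the right jet data, then uses a period-dominating spray of $H_{j+1}$ and the implicit function theorem to kill the residual periods, producing $G_{j+1}\in\mathscr{I}_A(S_{j+1}|E)$ with $\|G_{j+1}-G_j\|_{S_j}<\varepsilon_{j+1}$, with $G_{j+1}-G_j$ continuous on $S_j$, vanishing on $\Lambda\setminus\mathring S$ and to order $r$ on $E\cup(\Lambda\cap\mathring S)$, and with vanishing periods over $H_1(S_j;\Z)$.

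\textbf{Step 3: Passing to the limit.} On each fixed $S_j$ the sequence $\{G_k\}_{k\ge j}$ is Cauchy in the sup-norm (away from $E$) by $\sum_k\varepsilon_k<\infty$, so $G:=\lim_k G_k$ exists, is holomorphic on $M\setminus E$, meromorphic on $M$ with effective poles exactly on $E$ (the jet-interpolation at $E$ freezes the principal parts after the first step, so the poles neither move nor disappear), and its derivative $dG/\theta$ maps $M\setminus E$ into $A_*$ provided the $\varepsilon_j$ are chosen small enough relative to $\dist(dG_j/\theta,\,\C^n\setminus A_*)$ on compacts — so $G$ is an $A$-immersion (the immersion condition is the openness of $A_*$ in $A$ plus nonvanishing of the derivative, again preserved by small perturbation). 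All periods of $dG/\theta\cdot\theta$ over $H_1(M;\Z)$ vanish by construction, hence $G$ integrates to a well-defined meromorphic $A$-immersion $M\setminus E\to\C^n$; fullness is preserved since it is an open condition and is forced at each stage by Theorem~\ref{th:Mergelyan}. Finally $G-F=\sum_{j\ge 0}(G_{j+1}-G_j)$ converges uniformly on $S$, extends continuously to $S$, satisfies $\|G-F\|_S<\sum_j\varepsilon_j<\varepsilon$, vanishes on $\Lambda\setminus\mathring S$, and vanishes to order $r$ on $E\cup(\Lambda\cap\mathring S)$, giving (i) and (ii).

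\textbf{Main obstacle.} The crux is Step~2: controlling periods while extending across the new handles of the exhaustion. Theorem~\ref{th:Mergelyan} by itself gives approximation and jet-interpolation but says nothing about periods, and for the limit to integrate to a single-valued $A$-immersion on $M$ one must ensure period vanishing over a homology basis that stabilizes along the exhaustion. This is precisely where the Oka property of $A_*$ enters — it supplies period-dominating holomorphic sprays with values in $A_*$ (as in~\cite[\textsection 7]{alarcon2014null}), allowing one to solve the period equations. A secondary technical point is bookkeeping the poles: one must verify the principal parts at $E$ are preserved (this follows from imposing jet-interpolation of order $\ge$ the pole orders, or simply by keeping $G_{j+1}-G_j$ holomorphic near $E$), and that no new poles are introduced — automatic since each $G_j$ is holomorphic on $M\setminus E$ in a neighbourhood of $S_j$.
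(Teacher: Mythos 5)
Your strategy—Runge exhaustion by compacts, push the immersion out one stage at a time using Oka/Mergelyan approximation for maps into the Oka manifold $A_*$, keep jet-interpolation and Cauchy-estimate controls, pass to the limit—is the same as the paper's. The paper packages the per-step extension as Remark~\ref{remark:Mergelyan-extension} (a global strengthening of Theorem~\ref{th:Mergelyan} valid when $A_*$ is Oka and $S\hookrightarrow M$ induces an isomorphism on $H_1$) and runs a Morse-theoretic induction on sublevel sets $M_j=\{\tau\le c_j\}$ of a subharmonic exhaustion, distinguishing the noncritical case from the two critical cases (attaching an arc or a disc, with Gromov's integration lemma to extend the derivative over the attached arc).

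One point to correct in your write-up. You assert that Theorem~\ref{th:Mergelyan} ``says nothing about periods'' and that you therefore need the new immersion to ``have the same periods as $G_j$.'' This is a misconception: Theorem~\ref{th:Mergelyan} already outputs a well-defined meromorphic $A$-immersion $G\in\mathscr{I}_A(\wt S|E)$, so $dG/\theta$ is automatically exact over $H_1(S,\Z)$; period vanishing is exactly what Lemma~\ref{lemma:approx&interp into A, full}\,(ii) achieves inside the proof of Theorem~\ref{th:Mergelyan}. The real obstruction to using Theorem~\ref{th:Mergelyan} directly in the induction is the \emph{domain}: that theorem produces a map only on a small neighbourhood $\wt S$ of the admissible set $S$, not on all of $M_j$ (or $M$). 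It is to get this global domain that the Oka property is needed: globalize the core $h_0\in\mathscr{A}(S,A_*)$ to a map in $\mathscr{O}(M,A_*)$ by Oka approximation/interpolation, globalize the spray coefficients $h_{i,j}$ by the Mergelyan theorem, and then recover exactness via the implicit function theorem—this is precisely the content of Remark~\ref{remark:Mergelyan-extension}, which your Step~2 is in effect re-deriving from scratch at each stage. So the difficulty you should be naming is the domain of definition, not ``residual period discrepancy,'' and the mechanism you sketch (period-dominating spray plus IFT) should be described as enforcing exactness on the extended map rather than as matching periods with $G_j$. With that reframing, your plan is sound and coincides with the paper's.
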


We begin with preparations for the proof of Theorem~\ref{th:Mergelyan}. The following lemma is the key step; a Mergelyan type theorem with interpolation for meromorphic maps into a conical subvariety $A_*\subset\C^n$. Its main feature is the control of the periods of the approximating meromorphic map, which ensures that it indeed integrates to a directed meromorphic immersion.

\begin{lemma} \label{lemma:approx&interp into A, full}
Let $M$, $S$, $E$, $\Lambda$ and $A$ be as in Theorem~\ref{th:Mergelyan} and assume that $S$ is connected. Let $\mathscr{C}=\{C_1,\dots,C_l\}$ be a skeleton of $S$ based at $E\cup \Lambda$ (see Definition~\ref{def: skeleton}).
Then for any $\varepsilon >0$, any $r\in \Z_+$ and any $f\in \mathscr{A}_{\infty}(S|E,A_*)$~\eqref{eq:Ainfty} there exist an open neighbourhood $\widetilde{S} \subset M$ of $S$ and a full map $\tilde{f} \in \mathscr{O}_\infty(\widetilde S|E, A_*)$~\eqref{eq:Oinfty} such that:
\begin{itemize}
\item[\rm (i)] $\tilde{f}-f$ is continuous on $S$ and $||\tilde{f}-f||_S<\varepsilon$.

\item[\rm (ii)] $\mathscr{P}(\tilde{f})=0$, where $\mathscr{P}$ is the period map~\eqref{eq:period map} associated to $\mathscr{C}$, $f$ and a given nowhere vanishing holomorphic 1-form $\theta$ on $M$.

\item[\rm (iii)] $\tilde{f}$ and $f$ agree to order $r$ on $E \cup (\Lambda\cap \mathring{S})$.
\end{itemize}
\end{lemma}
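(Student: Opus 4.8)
The strategy is to separate the problem into two independent tasks: first, producing a \emph{holomorphic} approximant of $f$ near $S$ with values in $A_*$ that agrees with $f$ to order $r$ on $E\cup(\Lambda\cap\mathring S)$ (ignoring periods); and second, correcting the periods so that $\mathscr{P}(\tilde f)=0$. Since $A_*$ need not be assumed Oka here, one cannot invoke the Oka principle directly; instead one works with the fact that a meromorphic map $f\colon S\to A_*$ (with poles precisely on $E$) can be written, near each pole, in a distinguished coordinate form and then approximated componentwise. Concretely I would first reduce $S$ to a connected admissible set, use the skeleton $\mathscr C$ to realize the homotopy type, and observe that $A_*$ is the punctured cone on a submanifold $Y\subset\C\P^{n-1}$; the composition of $f$ with the projection $A_*\to Y$ is a continuous (resp. holomorphic away from $E$) map into $Y$ which can be approximated on $S$ by a holomorphic map $\widetilde S\to Y$ using the classical Mergelyan theorem for maps into complex manifolds together with the homotopy-theoretic triviality of maps from a 1-complex; lifting back through the $\C^*$-bundle $A_*\to Y$ recovers an approximant with the right pole behaviour by prescribing the radial factor via a scalar meromorphic Mittag-Leffler/Mergelyan approximation on $S$.

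The second and more delicate step is the period correction. Here I would exploit the fact that $A$ is \emph{not contained in any hyperplane} and is irreducible: this forces the tangent spaces $T_zA$, as $z$ ranges over $A_*$, to span $\C^n$ (indeed on a dense set already), which in turn allows one to build, following the spray/period-dominating construction of Alarc\'on--Forstneri\v c (cf.~\cite[\textsection 7]{alarcon2014null} and \cite[\textsection 3]{alarcon2019interpolation}), a holomorphic family $f_w\colon\widetilde S\to A_*$ of maps depending on finitely many parameters $w\in\C^N$ with $f_0$ the approximant from Step~1, such that each $f_w$ still agrees with $f$ to order $r$ on $E\cup(\Lambda\cap\mathring S)$ (arrange the spray vector fields to vanish to order $r$ there) and such that the differential at $w=0$ of the composed period map $w\mapsto\mathscr P(f_w)$ is surjective onto $(\C^n)^l$. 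One must be careful to attach such a dominating spray along \emph{each} arc $C_j$ of the skeleton; the curves being Jordan arcs meeting only at endpoints makes this bookkeeping manageable, and the fullness of the resulting map comes for free from the period-dominating property (or can be arranged by a further generic perturbation). An implicit function theorem argument then yields $w$ close to $0$ with $\mathscr P(f_w)=0$, and $\tilde f:=f_w$ is the desired map: (i) holds because $f_w\approx f_0\approx f$ on $S$, (ii) by construction, and (iii) because the spray was chosen to fix the jets on $E\cup(\Lambda\cap\mathring S)$.

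The step I expect to be the main obstacle is the \emph{simultaneous} control of three things in Step~1: the approximation on all of $S$ (including the arcs $\Gamma$, where only $\mathscr C^1$ regularity is available), the prescribed effective poles exactly on $E$ with matching $r$-jets there, and the $r$-jet interpolation at $\Lambda\cap\mathring S$ — all while keeping the range inside the \emph{noncompact, possibly singular-at-origin} variety $A_*$. The pole prescription is what makes this genuinely "Mittag-Leffler" rather than "Mergelyan": one needs the scalar radial-factor approximation to carry a prescribed principal part, which is where a classical Mittag-Leffler argument on the open Riemann surface $M$ (Behnke--Stein/Florack) enters. Once this local-to-global scalar problem is handled and glued to the $Y$-valued Mergelyan approximation, the rest is the now-standard period-dominating spray machinery, so the conceptual weight of the lemma sits in reconciling the meromorphic pole data with the geometry of $A_*$.
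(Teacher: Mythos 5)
Your overall architecture---first build a holomorphic approximant with the right jets, then kill the periods with a period-dominating spray and the implicit function theorem---matches the paper's proof of the lemma. However, there are two genuine gaps, plus one place where you take a different but plausible route.

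\textbf{The pole-clearing mechanism.} The crux of the paper's proof, which your plan lacks, is the elementary trick of choosing $g\in\mathscr O(M)$ with divisor $(g)=\prod_{p\in E}p^{o(p)}$ (where $o(p)$ is the maximal pole order of the components of $f$ at $p$) and working with $\tilde f_0:=fg\in\mathscr A(S,A_*)$. All of the spray machinery is applied to the \emph{pole-free} map $\tilde f_0$, with the controlling functions $h_{i,j}$ forced to vanish to order $r_0+1\ge r+\sum_{p\in E}\sum_i\mathrm{Ord}_p(f_i)+1$ on $E\cup\Lambda$; only at the very end does one divide back by $g$. This avoids having to apply the flows $\phi^i_t$ at points $f(z)$ that escape to infinity as $z\to p\in E$, and it makes the bookkeeping of vanishing orders transparent. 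Your plan to ``arrange the spray vector fields to vanish to order $r$ there'' is not quite enough: near a pole of order $o(p)$ one must compensate the blow-up of $|f|$, so the vanishing order has to exceed $r+o(p)$; and even then, unless the vector fields $V_i$ are assumed complete, the flow $\phi^i_{\zeta h(z)}(f(z))$ may not be well defined as $|f(z)|\to\infty$ regardless of how fast $h$ vanishes. The $g$-trick renders both issues moot, and I would say it is the conceptual heart of the lemma.

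\textbf{Fullness is a prerequisite, not a byproduct.} You write that ``the fullness of the resulting map comes for free from the period-dominating property.'' This is backwards. The period-dominating spray is built by picking points $p_{1,j},\dots,p_{N,j}$ on each arc $C_j$ such that $\mathrm{span}\{V_i(\tilde f_0(p_{i,j}))\}=\C^n$; that span is contained in $\Sigma(\tilde f_0)$, the span of all tangent spaces $T_zA$ over $z\in\tilde f_0(S)$. If $f$ (hence $\tilde f_0$) is not full, $\Sigma$ is a proper subspace, the differential of the period map has range only in $\Sigma^l\subsetneq(\C^n)^l$, and domination fails. So one must first make $f$ full (the paper does this by a separate flow-based perturbation which increases $\dim\Sigma$ strictly at each step while preserving $\mathscr P=0$ and the jets), and only then set up the spray. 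Your parenthetical ``or can be arranged by a further generic perturbation'' hints at the right fix, but as written the logical order is wrong.

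\textbf{Step 1 via projectivization.} Your proposal to project $f$ to a holomorphic map into $Y\subset\C\P^{n-1}$, Mergelyan-approximate there, and lift through the $\C^*$-bundle $A_*\to Y$ while prescribing the radial factor by a scalar Mittag-Leffler argument is a genuinely different route from the paper's. It is plausible: the bundle $A_*\to Y$ pulled back to a Stein neighbourhood of $S$ is trivial by Grauert, so the lift reduces to a scalar meromorphic problem. This buys a conceptually appealing separation of ``direction'' and ``magnitude,'' and does not require Oka-ness of $Y$. But the jet-matching at $E$ now has to be verified in both the projective and the radial direction, and the compatibility of the trivializing frame with the approximation has to be checked; the paper's scalar multiplication by $g$ achieves the same end with much less bookkeeping. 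If you pursue your variant you would still want to eventually clear the poles (via $g$) before running the spray step, for the reasons above.
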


\begin{proof}
The proof consists of adapting the method of sprays developed in~\cite{alarcon2014null} to the meromorphic framework. 
We proceed in two steps. Firstly, we approximate and interpolate $f$ by a full map $\hat{f}\in \mathscr{A}_{\infty}(S|E,A_*)$ with $\mathscr{P}(\hat{f})=0$. Secondly, we find a period dominating spray of maps in $\mathscr{A}_{\infty}(S|E,A_*)$ with core $\hat{f}$ and then approximate and interpolate it by a spray in $\widetilde{S}$. A suitable parameter value will then furnish a map $\tilde{f} \in \mathscr{O}_{\infty}(\widetilde{S}|E,A_*)$ satisfying the conclusion of the lemma.

\textit{Step~1: Approximation and interpolation by a full map.} We say that a map $h\in \mathscr{A}_{\infty}(S|E,A_*)$ is {\em full} if it is full on every relatively open subset $X$ of $S\setminus E$, meaning that the $\C$-linear span of the tangent spaces $T_{h(x)}A$, $x\in X$, equals $\C^n$. 
We shall explain the argument in the case when $S=K$ is a connected compact domain  (hence $\Gamma=\varnothing$). For a general admissible set of the form $S=K\cup \Gamma$ we repeat the argument on every connected component of $K$, however, on each arc in $\Gamma$, a suitable small continuous deformation of $f$ provides fullness on $\Gamma$ while preserving the other conditions; see~\cite[Lemma~7.3]{alarcon2014null}.

Assume that the map $f\in \mathscr{A}_{\infty}(K|E,A_*)$ is not full; otherwise skip the first step.
Denote by $\Sigma(f)$ the $\C$-linear subspace of $\C^{n}$ spanned by all tangent spaces $T_{z}A, \ z\in f(K\setminus E)$. Since $f$ is not full, $\Sigma(f)$ is a proper subspace of $\C^{n}$.

\begin{claim} \label{cl:Step1}
There is a full map $\hat{f}\colon K\setminus E \to A_{*}$ of class $\mathscr{A}_{\infty}(K|E,A_{*})$, such that $\hat{f}-f$ is continuous on $K$, vanishes to order $r$ on $E \cup (\Lambda \cap \mathring K)$, $\hat{f}$ is as close as desired to $f$ on $K$, and $\mathscr{P}(\hat{f})=0$.
\end{claim}

\begin{proof}
Choose points $x_1, \dots, x_k \in K\setminus (E \cup \Lambda)$ such that $T_{f(x_j)}A$,  $j=1,\dots, k$, span $\Sigma(f)$. Set $A':=\{0\} \cup \{z\in A_* \colon T_{z}A \subset \Sigma(f)\}$. Choose a holomorphic vector field $V$ on $A$ tangential to $A$ such that it vanishes at $0$ and is not everywhere tangential to $A'$ along $f(K\setminus E)$. Let $\C\ni t\mapsto \phi(t,z)\in A$ be the flow of $V$ for small values of $|t|$ and $\phi(0,z)=z\in A$.
Fix a nonconstant function $h\in \mathscr{O}(K)$ with $h(x_j)=0, \ j=1,\dots,k$, and vanishing to order $r$ at all points in $E\cup \Lambda$.
For any function $g$ on a small neighbourhood of the constant zero-function in $\mathscr{A}(K)$, define $\Phi(g) \in \mathscr{A}_{\infty} \left(K|E, A_* \right)$ by
\begin{equation*}
	\Phi(g)(p) := \phi \left(g(p)h(p),f(p) \right), \quad p\in K\setminus E,
\end{equation*}
and note that $\Phi(g)\in \mathscr{C}^{0}(C,f)$, where $C=\bigcup_{j=1}^l C_j$; see~\eqref{eq:C^0(C,f)}. Consider the map
\begin{equation*}
g \longmapsto \mathscr{P} \left( \Phi(g) \right) \in (\C^{n})^l,
\end{equation*}
where $\mathscr{P}$ is the period map in Lemma~\ref{lemma:approx&interp into A, full}-\rm(ii).
We have $\mathscr{P}(\Phi(0)) = \mathscr{P}(\phi(0,f)) = \mathscr{P}(f)=0$ and since the vector space $\mathscr{A}(K)$ is of infinite dimension, there exists a nonconstant function $g\in \mathscr{A}(K)$ close to the zero-function on $K$, satisfying $\mathscr{P}(\Phi(g))=0$.
It follows that the map $\hat{f} := \Phi(g) \colon K\setminus E \to A_*$ lies in $\mathscr{A}_{\infty}(K|E,A_{*})$ and satisfies the requirements in Claim~\ref{cl:Step1} except that $\hat f$ may fail to be full. Indeed, note first that if $p\in E\cup \Lambda$, then $(\hat{f}-f)(p) = \Phi(g)(p)-f(p) = \phi(0,f(p))-f(p) = f(p)-f(p) = 0$ (to the correct order, as $h$ vanishes to order $r$ at $p$), and approximation on $K$ follows by construction. 

Let us check that $\dim \Sigma(\hat{f}) > \dim \Sigma(f)$, where $\Sigma(\hat f)$ denotes the $\C$-linear subspace of $\C^{n}$ spanned by all tangent spaces $T_{z}A$ with $z\in \hat f(K\setminus E)$. Firstly, observe that $\Sigma(f)\subset \Sigma(\hat{f})$. Indeed, since $h(x_j)=0, \ j=1,\dots,k$, the inclusion follows from the equality $\hat{f}(x_j)=\Phi(g)(x_j)=\phi(g(x_j)h(x_j), f(x_j)) = \phi(0,f(x_j))=f(x_j)$.
Moreover, $g$ is nonconstant on $K$ (hence so is $gh$) and $V$ is not everywhere tangential to $A'$ along $f(K\setminus E)$, which implies that there exists a point $x_0 \in K \setminus (E\cup \Lambda \cup \{x_1,\dots,x_k\})$ such that $\hat{f}(x_0)\in A\setminus A'$. Therefore, $T_{\hat{f}(x_0)}A \subset \Sigma(\hat{f}) \setminus \Sigma(f)$ and $\dim \Sigma(\hat{f}) > \dim \Sigma(f)$, as claimed. 
Finally, a finite recursive application of this argument enables us to assume that $\dim \Sigma(\hat{f})=n$, thereby concluding that $\hat f$ is full. 
\end{proof}

\textit{Step~2: Approximation and interpolation by maps in $\mathscr{O}_{\infty}(\widetilde{S}|E,A_*)$.}
By the previous step, we may assume that $f$ is full on $S$.
Choose a function $g\in \mathscr{O}(M)$ with
\begin{equation}\label{eq:(g)}
	(g)=\prod_{p\in E}p^{o(p)},
\end{equation}
where $o(p)=\max \{ \textrm{Ord}_{p}(f_i)\colon i=1,\dots,n\}$ and $\textrm{Ord}_{p}(f_i)$ denotes the order of $f_i$ at the pole $p$. 
Define
\begin{equation}\label{eq:tildef0}
	\tilde{f}_0:=fg.
\end{equation} 
Since $f\in \mathscr{A}_{\infty}(S|E,A_{*})$, we have by~\eqref{eq:(g)} that $\tilde{f}_0 \in \mathscr{A}(S,A_{*})$. 
Choose holomorphic vector fields $V_1,\dots,V_N$ on $\C^n$ ($N\in \N$) that are tangential to $A$ along $A$, vanish at $0\in A$ and such that $\span\{V_1(z),\dots,V_N(z)\}=T_{z}A$ for each $z\in A_{*}$. Let $\phi_{t}^{i}$ denote the flow of $V_i$ for small $t\in \C$ and all $i=1,\dots,N$. Recall that $l$ is the number of arcs in the skeleton $\mathscr{C}=\{C_1,\dots,C_l\}$ in the statement of the lemma.

\begin{claim} \label{claim:period spray}
There exist an open neighbourhood $U$ of $0\in \C^{Nl}$ and a map $\widetilde{\Phi}_{\tilde{f}_0}\in \mathscr{A}(U\times S,A_{*})$ such that the map
\begin{gather*}
U\times (S \setminus E) \ni (\zeta,p) \longmapsto \widetilde{\Phi}_{\tilde{f}_0}(\zeta,p)/g(p) \in A_{*}
\end{gather*}
is continuous, holomorphic on $U\times (\mathring S \setminus E)$ and satisfies $\widetilde{\Phi}_{\tilde{f}_0}(0,\cdot)/g=\tilde{f}_0/g=f$. 
Moreover, $\widetilde \Phi_{\tilde f_0}(\zeta,\cdot)/g$ is meromorphic on $\mathring S$,
the difference $\widetilde{\Phi}_{\tilde{f}_0}(\zeta,\cdot)/g-f$ extends continuously to $S$, hence $\widetilde{\Phi}_{\tilde{f}_0}(\zeta,\cdot)/g\in \Cscr^0(C,f)$~\eqref{eq:C^0(C,f)}, $\widetilde{\Phi}_{\tilde{f}_0}(\zeta,\cdot)/g-f$ vanishes to order $r$ on $E \cup (\Lambda\cap \mathring S)$ for each parameter $\zeta \in U$, and the period map $U\ni \zeta \mapsto \mathscr{P}(\widetilde{\Phi}_{\tilde{f}_0}(\zeta,\cdot)/g) \in \C^{nl}$~\eqref{eq:period map} has maximal rank equal to $nl$ at $\zeta=0$.
\end{claim}

\begin{proof}
Let $\gamma_j \subset \mathring C_j$, $j=1,\ldots,l$, be Jordan arcs.
Since $f$ is full on $S$, the same holds for $\tilde{f}_0$~\eqref{eq:tildef0}, implying that the tangent spaces $T_{\tilde{f}_0(x)}A$, $x \in S$, span $\C^n$, which is also true on the arcs $\gamma_j$, $j=1,\dots,l$.
Thus, there exist pairwise distinct points $p_{1,j},\dots,p_{N,j}\in \gamma_j$ such that
\begin{gather} \label{eq:span(V(tilde f0))}
\span \left\{ V_1(\tilde{f}_0(p_{1,j})),\dots, V_N(\tilde{f}_0(p_{N,j})) \right\} = \C^n,\quad j=1,\ldots,l.
\end{gather}
Choose functions $\tilde{f}_{1,j}, \dots, \tilde{f}_{N,j} \in \mathscr{C}^{0}(C,\C)$ with pairwise disjoint supports such that $p_{i,j}$ is an interior point of $\supp(\tilde{f}_{i,j})\subset \gamma_j$ and
\begin{gather} \label{eq:proof-claim-V_i(tilde f0)}
\int_{C_{j}} \frac{\tilde{f}_{i,j}}{g}\cdot(V_i \circ \tilde{f}_0) \theta \approx V_i ( \tilde{f}_0(p_{i,j})), \quad i=1,\dots,N;
\end{gather}
see~\eqref{eq:(g)}.
Set $\widetilde{F}=(\tilde{f}_{1,j},\dots,\tilde{f}_{N,j})_{j=1,\dots,l} \in \mathscr{C}^{0}(C,(\C^N)^l)$ and let $W$ be an open neighbourhood of $0 \in (\C^N)^l$ so small that the map $\Phi_{\widetilde{F}} \colon W \times C \times A_{*} \to A_{*}$,
\begin{multline*} \label{eq:Phi_tildeF}
\Phi_{\widetilde{F}}(\zeta,p,z) := \left(  \phi_{\zeta_{1,1}\tilde{f}_{1,1}(p)}^1 \circ \cdots \circ \phi_{\zeta_{N,1}\tilde{f}_{N,1}(p)}^N \circ \cdots \right. \\
\cdots\left. \circ \phi_{\zeta_{1,l}\tilde{f}_{1,l}(p)}^1 \circ \cdots \circ \phi_{\zeta_{N,l}\tilde{f}_{N,l}(p)}^N \right) (z),
\end{multline*}
is well defined, where $\zeta = \left( (\zeta_{i,j})_{i=1,\dots,N} \right)_{j=1,\dots,l} \in W$.
Also, define the map $\Phi_{\widetilde{F},\tilde{f}_0} \colon W \times C \to A_{*}$ by
\begin{equation*} \label{eq:Phi_tildeFcore}
\Phi_{\widetilde{F},\tilde{f}_0}(\zeta,p) := \Phi_{\widetilde{F}} \big(\zeta,p,\tilde{f}_0(p)\big),
\end{equation*}
that is, a spray of continuous maps with core 
\begin{equation}\label{eq:ftildecore}
	\Phi_{\widetilde{F},\tilde{f}_0}(0,\cdot)=\tilde{f}_0.
\end{equation} 
Observe that $\Phi_{\widetilde{F},\tilde{f}_0}(\zeta,\cdot)/g \in \mathscr{C}^0(C\setminus E,A_{*})$ and $\Phi_{\widetilde{F},\tilde{f}_0}(\zeta,\cdot)/g =\tilde f_0/g= f$ on $C\setminus \bigcup_{i,j}\supp({\tilde{f}_{i,j}})$, hence $\Phi_{\widetilde{F},\tilde{f}_0}(\zeta,\cdot)/g-f=0$ on a neighbourhood of $E \cup \Lambda$ in $C$. Consequently, $\Phi_{\widetilde{F},\tilde{f}_0}(\zeta,\cdot)/g \in \mathscr{C}^0(C,f)$, $\zeta \in W$; see~\eqref{eq:C^0(C,f)}.
We are therefore entitled to define the period map $\mathscr{Q}=(\mathscr{Q}_1,\dots,\mathscr{Q}_l) \colon W \to (\C^n)^l$ by
\begin{equation} \label{eq:sprayQ}
	\mathscr{Q}(\zeta) = \mathscr{P} \big(\Phi_{\widetilde{F},\tilde{f}_0}(\zeta,\cdot)/g \big), \quad \zeta\in W;
\end{equation}
see~\eqref{eq:period map}.
Then, for each pair of indices $i\in\{1,\ldots,N\}$ and $j\in\{1,\ldots,l\}$, we have 
\begin{equation} \label{eq:dQ1}
\frac{\partial\mathscr{Q}_k}{\partial \zeta_{i,j}}\Big|_{\zeta=0} = 0 \quad \text{for }k\in\{1,\ldots,l\},\; k\neq j,
\end{equation}
since $\supp({\tilde{f}_{i,j}})\subset \gamma_j \subset \mathring C_j$, while
\begin{equation} \label{eq:dQ2}
\frac{\partial\mathscr{Q}_j}{\partial \zeta_{i,j}}\Big|_{\zeta=0} = \int_{C_j} \frac{\tilde{f}_{i,j}}{g}\cdot (V_i\circ \tilde{f}_0)\theta  \stackrel{\text{\eqref{eq:proof-claim-V_i(tilde f0)}}}{\approx} V_i(\tilde{f}_0(p_{i,j})).
\end{equation}
The matrix $\Big(\big(\frac{\partial\mathscr{Q}}{\partial\zeta_{i,j}}\big|_{\zeta=0}\big)_{i=1,\dots,N}\Big)_{j=1,\dots,l}$ has block structure with $l\times l$ blocks, each of them being of size $N\times n$, and by~\eqref{eq:dQ1}, these blocks are nonzero only on the diagonal. Therefore, assuming that the approximation in~\eqref{eq:proof-claim-V_i(tilde f0)} is close enough,~\eqref{eq:span(V(tilde f0))},~\eqref{eq:proof-claim-V_i(tilde f0)} and~\eqref{eq:dQ2} guarantee that
\begin{equation*}
\textrm{rank} \left(\Big(\frac{\partial\mathscr{Q}}{\partial\zeta_{i,j}}\Big|_{\zeta=0}\right)_{i=1,\dots,N}\Big)_{j=1,\dots,l} = \sum_{j=1}^{l} \textrm{rank} \Big(\frac{\partial\mathscr{Q}_j}{\partial\zeta_{i,j}}\Big|_{\zeta=0}\Big)_{i=1,\dots,N} = nl.
\end{equation*}
Thus, the implicit function theorem gives a closed ball $U' \subset W\subset (\C^N)^l$ around $0 \in (\C^N)^l$ such that
\begin{equation} \label{eq:Qsubmersion}
\mathscr{Q} \colon U' \to \mathscr{Q}(U') \textrm{ is a holomorphic submersion and } \mathscr{Q}(0) = 0.
\end{equation}
The latter follows from~\eqref{eq:tildef0},~\eqref{eq:ftildecore},~\eqref{eq:sprayQ} and~\eqref{eq:period map}. 
Choose $U'$ so small that
\begin{equation*}
\Phi_{\widetilde{F},\tilde{f}_0}(\zeta,\cdot) \approx \tilde{f}_0 \quad \text{for all }\zeta \in U'.
\end{equation*}
To finish, we shall approximate $\Phi_{\widetilde{F},\tilde{f}_0}$ by a map that satisfies the claim.
For this, choose an integer 
\begin{equation}\label{eq:r0}
	r_0 \geq r + \sum_{p\in E} \sum_{i=1}^{n} \textrm{Ord}_{p}(f_i).
\end{equation} 
Using the Mergelyan theorem, we approximate each $\tilde{f}_{i,j}  \in \mathscr{C}^{0}(C,\C)$ uniformly on $C$ by a function $h_{i,j}\in \mathscr{O}(S)$ with 
\begin{equation} \label{eq: h_ij}
(h_{i,j}) \geq D_1=\prod_{p\in E\cup \Lambda}p^{r_0+1}, 
\end{equation}
which is possible since $\tilde f_{i,j}=0$ on a neighbourhood of $E\cup\Lambda$ and $C$ is Runge in a neighbourhood of $S$. 
Set $H=(h_{1,j},\dots,h_{N,j})_{j=1,\dots,l} \in \left(\mathscr{O}_{D_1}(S)^N \right)^l$; see~\eqref{eq:OD(K)}.
Choose an open neighbourhood $U$ of the origin, $0\in U\subset U'\subset (\C^N)^l$ and consider the map $\Phi_{H,\tilde{f}_0} \colon U\times S \to A_{*}$ given by
\begin{multline*}
\Phi_{H,\tilde{f}_0}(\zeta,p) := \left( \phi_{\zeta_{1,1}h_{1,1}(p)}^1 \circ \cdots \circ \phi_{\zeta_{N,1}h_{N,1}(p)}^N \circ \cdots \right. \\
\left. \cdots\circ \phi_{\zeta_{1,l}h_{1,l}(p)}^1 \circ \cdots \circ \phi_{\zeta_{N,l}h_{N,l}(p)}^N \right) \big( \tilde{f}_0(p) \big),
\end{multline*}
which is a spray of class $\mathscr A(U\times S,A_*)$ with core $\Phi_{H,\tilde{f}_0}(0,\cdot)=\tilde{f}_0$.
It is clear that the map $\Phi_{H,\tilde{f}_0}(\zeta,\cdot)/g$ lies in $\mathscr A_\infty (S|E,A_*)$ for all $\zeta\in U$ and $\Phi_{H,\tilde{f}_0}(0,\cdot)/g = \tilde{f}_0/g = f$. 
Letting $D_2:= \prod_{p\in E\cup \Lambda} p^r$, it follows that $\Phi_{H,\tilde{f}_0}(\zeta,\cdot)/g - f \in \mathscr{A}_{D_2}(S)^n$ for all $\zeta \in U$  (see~\eqref{eq:OD(K)},~\eqref{eq: h_ij}), so, in particular, $\Phi_{H,\tilde{f}_0}(\zeta,\cdot)/g \in \mathscr{C}^0(C,f)$; see~\eqref{eq:C^0(C,f)}.
This implies that the map 
\begin{equation}\label{eq:Qtilde}
\widetilde{\mathscr{Q}}(\zeta) := \mathscr{P}\left( \Phi_{H,\tilde{f}_0}(\zeta,\cdot)/g \right), \quad \zeta \in U,
\end{equation}
is well-defined with $\widetilde{\mathscr{Q}}(0)=\mathscr{P}(f)=0$.
Assuming that each function $h_{i,j}$ is sufficiently close to $\tilde{f}_{i,j}$ on $C$, and provided that $U$ is small enough, we have that the period map $\widetilde{\mathscr{Q}} \colon U \to \C^{nl}$~\eqref{eq:Qtilde} has maximal rank at $\zeta=0$ by~\eqref{eq:Qsubmersion}. It is now clear that 
$\widetilde{\Phi}_{\tilde{f}_0}=\Phi_{H,\tilde{f}_0}$ satisfies the conclusion of the claim.
\end{proof}
%
%
We now continue the proof of Lemma~\ref{lemma:approx&interp into A, full} by proceeding with Step~$2$. We let $\widetilde{S}$ be an open neighbourhood of $S$ in $M$ where the functions $h_{i,j}$ in \eqref{eq: h_ij} are holomorphic and, by the standard Mergelyan theorem, approximate $\tilde{f}_0 \in \mathscr{A}(S,A_{*})$ in \eqref{eq:tildef0} uniformly on $S$ by a function $h_0\in \mathscr{O}(\widetilde{S},A_{*})$ which agrees with $\tilde{f}_0$ to order $r_0+1$ on $(E\cup \Lambda)\cap \mathring S$; see~\eqref{eq:r0}.
Define $\Phi_{H,h_0} \colon U\times \widetilde{S} \to A_{*}$ by
\begin{multline} \label{eq: Phi_H,h0}
\Phi_{H,h_0}(\zeta,p) := \left( \phi_{\zeta_{1,1}h_{1,1}(p)}^1 \circ \cdots \circ \phi_{\zeta_{N,1}h_{N,1}(p)}^N \circ \cdots \right. \\
\left. \cdots\circ \phi_{\zeta_{1,l}h_{1,l}(p)}^1 \circ \cdots \circ \phi_{\zeta_{N,l}h_{N,l}(p)}^N \right) \left( h_0(p) \right).
\end{multline}
Note that $\Phi_{H,h_0}(0,\cdot)=h_0$, the map $\Phi_{H,h_0}(\zeta,\cdot)$ is holomorphic on $\widetilde{S}$ for each $\zeta \in U$, $\Phi_{H,h_0} \approx \Phi_{H,\tilde{f}_0}$ uniformly on $U\times S$, and $\Phi_{H,h_0}$ agrees with $\Phi_{H,\tilde{f}_0}$ to order $r_0+1$ on $U\times ((E \cup \Lambda) \cap \mathring S)$.
Moreover, $\Phi_{H,h_0}(\zeta,\cdot)/g\in\mathscr{O}_\infty(\widetilde S|E,A_*)$  and $\Phi_{H,h_0}(\zeta,\cdot)/g-f \in \mathscr{A}_{D_2}(S)^n$. In particular, $\Phi_{H,h_0}(\zeta,\cdot)/g \in \mathscr{C}^0(C,f)$,
which enables us to consider the period map $\mathscr{R} \colon U \to \C^{nl}$ given by
\begin{equation*}
\mathscr{R}(\zeta) := \mathscr{P} \left(\Phi_{H,h_0}(\zeta,\cdot)/g \right).
\end{equation*}
If the approximation of $f_0$ by $h_0$ is sufficiently close on $S$, then $\mathscr{R}(\zeta) = \mathscr{P}(\Phi_{H,h_0}(\zeta,\cdot)/g) \approx \mathscr{P}(\Phi_{H,\tilde{f}_0}(\zeta,\cdot)/g) = \widetilde{\mathscr{Q}}(\zeta)$ for each $\zeta \in U$, implying that $\mathscr{R}$ is submersive at $0\in\C^{Nl}$ and $0\in\C^{nl}$ lies in the range of $\mathscr{R}$ by the implicit function theorem. Thus, there exists a parameter $\zeta_0 \in U$ close to $0$ such that $\mathscr{R}(\zeta_0)=0$.
Defining $\tilde{f} := \Phi_{H,h_0}(\zeta_0,\cdot)/g \in \mathscr{O}_{\infty}(\widetilde{S}|E,A_{*})$, it satisfies the following properties.
\begin{itemize}
\item
$\tilde{f}-f \in \mathscr{A}_{D_2}(S)^n$ and it is continuous on $S$. This implies condition~(iii) and the first part of~(i) in the statement of the lemma.

\item
$\mathscr{P}(\tilde{f})=\mathscr{R}(\zeta_0)=0$, and hence $(\tilde{f}-f)\theta$ is exact on $S$; see~\eqref{eq:period map} and recall that $C$ is a strong deformation retract of $S$. This shows~(ii).

\item
$\tilde{f} = \Phi_{H,h_0}(\zeta_0,\cdot)/g \approx \Phi_{H,\tilde{f}_0}(\zeta_0,\cdot)/g \approx f_0/g = f$ on $S$, proving~(i). 
\end{itemize}
This concludes Step $2$ and completes the proof of the lemma.
\end{proof}

\begin{proof}[Proof of Theorem~\ref{th:Mergelyan}]
The idea of the proof is just to pass to the derivative of the original directed meromorphic immersion, use Lemma~\ref{lemma:approx&interp into A, full}, and then integrate the map furnished by the lemma to obtain a directed meromorphic immersion meeting conclusions of the theorem.

Assume that the admissible set $S$ is connected; if not, apply the same argument on each of its connected components.
Moreover, assume that $K\neq \varnothing$; otherwise $S=\Gamma$ and $F\colon S\to \C^n$ is an $A$-immersion of class $\mathscr{C}^1(S)$ as in \cite[\textsection 7]{alarcon2014null}, hence \cite[Theorem 7.2-(a)]{alarcon2014null} solves the problem except for the interpolation, which is achieved as in what follows.
Fix a nowhere vanishing holomorphic $1$-form $\theta$ on $M$ and write $dF=f\theta$. Pick a point $p_0\in\mathring S\setminus (E\cup\Lambda)\subset \mathring K$ and let $\mathscr{C}=\{C_1,\ldots,C_l\}$ be a skeleton of $S$ based at $E\cup \Lambda\cup\{p_0\}$ (see Definition~\ref{def: skeleton}).
By Lemma~\ref{lemma:approx&interp into A, full}, we may approximate $f$ uniformly on $S$ by a full meromorphic map $\tilde{f}\in \mathscr{A}_\infty(\wt S|E,A_*)$ defined on a neighbourhood $\widetilde{S}$ of $S$ such that $S$ is a strong deformation retract of $\wt S$, $\tilde f$ agrees with $f$ to order $r$ on $E \cup (\Lambda\cap \mathring S)$, and $\mathscr{P}(\tilde f)=0$, where $\mathscr{P}$ is the period map~\eqref{eq:period map} associated to $\mathscr{C}$, $f$ and $\theta$. 
In particular, $\tilde f\theta$ is exact on $\wt S$; indeed, $f\theta=dF$ and $(\tilde f-f)\theta$ are exact on $C=\bigcup_{i=1}^lC_i$, and $C$ is a strong deformation retract of $S$, hence of $\wt S$ (see the last part in the proof of Lemma~\ref{lemma:approx&interp into A, full}).
Therefore, the formula
\begin{equation*}
G(p) := F(p_0) + \int_{p_0}^{p}\tilde{f}\theta, \quad p\in \widetilde{S}\setminus E,
\end{equation*}
defines a full meromorphic $A$-immersion on $\widetilde{S}$ arbitrarily close to $F$ on $S$, and such that the difference $G-F$ is continuous on $S$, vanishes at all points in $\Lambda$ and vanishes to order $r+1$ on $E \cup (\Lambda \cap \mathring S)$; take into account that $\mathscr{C}$ is based at $E\cup \Lambda\cup\{p_0\}$, $C$ is connected and $\mathscr{P}(\tilde f)=0$. We conclude that $G$ satisfies the theorem.
\end{proof}

\begin{remark} \label{remark:Mergelyan-extension}
If in Theorem~\ref{th:Mergelyan} we additionally assume that $A_{*}=A\setminus\{0\}$ is an Oka manifold, $S$ is connected and the inclusion $S \hookrightarrow M$ induces an isomorphism $H_1(S,\Z) \to H_1(M,\Z)$ of the homology groups, then there is a full meromorphic $A$-immersion $G \in\mathscr{I}_A(M|E)$ satisfying {\rm (i)} and {\rm (ii)} in the statement of Theorem~\ref{th:Mergelyan}.
Indeed, for this we adapt the proof of Lemma~\ref{lemma:approx&interp into A, full} as follows. 
Firstly, using Oka approximation \cite[Theorem~5.4.4]{forstneric2017stein} (see also \cite[Theorem~1.13.3]{alarcon2021minimal}), we approximate the map $\tilde{f}_0 \in \mathscr{A}(S,A_{*})$ in~\eqref{eq:tildef0} uniformly on $S$ by $h_0 \in \mathscr{O}(M,A_{*})$ such that $h_0-\tilde{f}_0$ vanishes to order $r_0+1$ on $E \cup (\Lambda \cap \mathring S)$.
Secondly, by the Mergelyan approximation theorem on admissible sets \cite[Theorem~16]{fornaess2020} (see also \cite[Theorem~1.12.11]{alarcon2021minimal}), we approximate each $h_{i,j} \in \mathscr{O}(S)$ in~\eqref{eq: h_ij} uniformly on $S$ by $\hat{h}_{i,j} \in \mathscr{O}(M)$ such that $\hat{h}_{i,j}-h_{i,j}$ vanishes to order $r_0+1$ on $E \cup (\Lambda \cap \mathring S)$. Write $\widehat{H} = (\hat{h}_{i,j})_{i,j}$.
The map $\Phi_{\widehat{H},h_0} \colon U \times M \to A_{*}$, defined as~\eqref{eq: Phi_H,h0} but using $h_0$ and $\hat{h}_{i,j}$ from above, satisfies $\Phi_{\widehat{H},h_0}(0,\cdot) = h_0$, $\Phi_{\widehat{H},h_0} \approx \Phi_{H,\tilde{f}_0}$ uniformly on $U\times S$, and their difference vanishes to order $r_0+1$ on $U \times (E \cup (\Lambda \cap \mathring S))$.
By the same argument as before, provided that all approximations are sufficiently close, the implicit function theorem furnishes us with a parameter $\zeta_0 \in U$ close to the origin, such that the full map $\tilde{f} := \Phi_{\widehat{H},h_0}(\zeta_0,\cdot)/g \in \mathscr{O}_{\infty}(M|E,A_{*})$ satisfies $\mathscr{P}(\tilde{f}) = \mathscr{P}(f)=0$, $\tilde{f} \approx f$ on $S$, and the difference $\tilde{f}-f$ vanishes to order $r$ on $E \cup (\Lambda \cap \mathring S)$.
The immersion $G$ is then obtained as in the proof of Theorem~\ref{th:Mergelyan}.
\end{remark}

\begin{proof}[Proof of Proposition~\ref{prop:semiglob}]
Let $F$ be as in the statement. Set $\varepsilon_0 := \varepsilon/2$ and fix a nowhere vanishing holomorphic $1$-form $\theta$ on $M$.
Remark~\ref{remark:Mergelyan-extension} provides a neighbourhood $U \subset M$ of $S$ with $H_1(S,\Z) \overset{\sim}{=} H_1(U,\Z)$, and a full meromorphic $A$-immersion $F^0 \in \mathscr{I}_A(U|E)$ such that $F^0-F$ is continuous on $S$, vanishes on $\Lambda \setminus \mathring S$, vanishes to order $r$ on $E \cup (\Lambda \cap \mathring S)$ and $||F^0-F||_{S}<\varepsilon_0$.

Choose a strongly subharmonic Morse exhaustion function $\tau \colon M \to \R$ with $S \subset \{\tau<0\} \subset \{\tau \leq 0\} \subset U$; see e.g.~\cite[Proposition~1.12.5]{alarcon2021minimal}. We may assume that $0$ is a regular value of $\tau$ and that every level set $\{\tau=c\}$, $c>0$, contains at most one critical point of $\tau$. 
Choose a strictly increasing sequence $0=c_0<c_1<c_2<\dots$ with $\lim_{j\to \infty}c_{j}= +\infty$ and such that for every $j\in \Z_+$, $c_j$ is a regular value of $\tau$, and the set $A_j = \{c_{j-1}<\tau<c_{j}\}$ ($j\in \N$) contains at most one critical point of $\tau$.
Set $M_j = \{\tau \leq c_j\}$, $j\in\Z_+$. We shall inductively construct a sequence of full meromorphic $A$-immersions $\{F^j \colon M_j\setminus E \to \C^n\}_j$, $F^j \in \mathscr{I}_A(M_j|E)$, and a decreasing sequence of positive numbers $\{\varepsilon_j\}_j$, satisfying the following conditions for all $j\in \N$.
\begin{itemize}
\item[\rm (1$_j$)] $F^{j}-F^{j-1}$ is continuous on $M_{j-1}$ and $||F^{j}-F^{j-1}||_{M_{j-1}}<\varepsilon_{j-1}$.

\item[\rm (2$_j$)] $F^j-F$ is continuous on $M_j$, vanishes on $\Lambda \setminus \mathring S$ and vanishes to order $r$ on $E \cup (\Lambda \cap \mathring S)$.

\item[\rm (3$_j$)] $0<\varepsilon_{j}<\varepsilon_{j-1}/2$, and if $G\colon M\setminus E \to \C^n$ is holomorphic with $||G-F^j||_{M_j\setminus E}<\varepsilon_{j-1}$, then $G|_{M_{j-1}\setminus E} \colon M_{j-1} \setminus E \to \C^n$ is an immersion.
\end{itemize}
If such sequences exist, then the sequence $\{F^j\}_j$ converges uniformly on compact sets to the limit map $G := \lim_{j\to \infty}F^{j} \colon M\setminus E \to \C^n$, which is a full meromorphic $A$-immersion of class $\mathscr{I}_A(M|E)$, the difference $G-F$ extends continuously to $S$, vanishes on $\Lambda \setminus \mathring S$ and vanishes to order $r$ on $E \cup (\Lambda \cap \mathring S)$, and $||G-F||_{S}<\varepsilon$. So, $G$ satisfies the conclusion of the theorem.

We now explain the induction.
The base of induction is provided by the already chosen $F^0 \colon U\setminus E \to \C^n$ and $\varepsilon_0$.
Assume that for some fixed integer $j\in \N$ we have already found maps $F^{i}$ and numbers $\varepsilon_i$ meeting the above conditions for all $i\in \{0,1,\dots,j-1\}$. To construct $F^{j}$, we consider the following two cases.

\textit{Case~1: The domain $A_j = \{c_{j-1}<\tau<c_j\}$ does not contain any critical point of $\tau$.}
In this situation, Remark~\ref{remark:Mergelyan-extension} gives a full meromorphic $A$-immersion $F^{j} \colon M_j\setminus E \to \C^n$ satisfying properties \rm (1$_j$) and \rm (2$_j$). Choosing $\varepsilon_j>0$ sufficiently small fulfils \rm (3$_j$) as well; note that here we use Cauchy estimates and the fact that $F^j$ has effective poles at all points in $E$, hence so does $G$.

\textit{Case~2: The domain $A_j = \{c_{j-1}<\tau<c_j\}$ contains a critical point $p$ of $\tau$.}
By assumption, $p$ is the unique critical point of $\tau$ on $A_j$ and it is a Morse point with Morse index either $0$ or $1$.
If the Morse index of $\tau$ at $p$ equals $1$, the change of topology is described by attaching to $M_{j-1}$ a smoothly embedded oriented arc $\Gamma_j \subset A_j \cup bM_{j-1}$ with an initial point $p_j \in bM_{j-1}$ and a final point $q_j \in bM_{j-1}$, and such that $\Gamma_j$ intersects $bM_{j-1}$ transversely and is otherwise contained in $A_j$. The set $S_j := M_{j-1} \cup \Gamma_j$ is admissible in $M_j$ and has the same topology as $M_j$.
If the points $p_j,q_j$ belong to the same connected component of $M_{j-1}$, a new nontrivial curve appears in the homology, however, if they belong to different components of $M_{j-1}$, we have that $H_1(M_{j-1},\Z) \overset{\sim}{=} H_1(S_j,\Z)$. In either case, we proceed as follows.
By~\cite[Lemma 3.5.4]{alarcon2021minimal}, we extend $f^{j-1}$ smoothly to the arc $\Gamma_j$ such that it still maps to $A_{*}$ and satisfies
\begin{equation*}
\int_{\Gamma_j} f^{j-1}\theta = F^{j-1}(q_j)-F^{j-1}(p_j).
\end{equation*}
The extended map $f^{j-1}$, defined on $S_j=M_{j-1} \cup \Gamma_j$, yields a meromorphic $A$-immersion $\widetilde{F}^{j-1} \in \mathscr{I}_A(S_j|E)$; note that $\widetilde{F}^{j-1} = F^{j-1}$ holds on $M_{j-1}$. Remark~\ref{remark:Mergelyan-extension} then furnishes the succeeding map $F^j \in \mathscr{I}_A(M_j|E)$.

Otherwise, the Morse index of $\tau$ at $p$ equals $0$. This time, a new connected component of $\{\tau \leq c \}$ appears at $p$ when $c$ passes $\tau(p)$. Let $\Delta \subset A_j$ denote a smoothly bounded disc around $p$. 
We extend $F^{j-1}$ to $S_j := M_{j-1}\cup \Delta$ by defining any full holomorphic $A$-immersion on $\Delta$. Since $S_j$ is admissible in $M_j$ and has the same topology as $M_j$, Remark~\ref{remark:Mergelyan-extension} applied to each component of $S_j$ furnishes a suitable map $F^{j} \in \mathscr{I}_A(M_j|E)$.
This closes the induction and completes the proof. 
\end{proof}


\section{Desingularizing meromorphic $A$-immersions} \label{sec:desingularizing}

In this section we prove the following more precise version of Corollary~\ref{co:intro-A1}-(b) for removing self-intersections of directed meromorphic immersions. 
We shall later use it in proofs of Theorems~\ref{th:Mittag-Leffler} and~\ref{th:Mittag-Leffler-properness} in order to ensure the embeddedness.
%
%
\begin{theorem}\label{th:compact-embedding}
If $M$ is a compact bordered Riemann surface, $E \subset \mathring M$ is a finite subset, and $A \subset \C^n$ $(n\ge 3)$ is a closed irreducible conical subvariety which is not contained in any hyperplane and is smooth away from $0$, then every meromorphic $A$-immersion $F \colon M\setminus E \to \C^n$ of class $\mathscr{I}_A(M|E)$ can be approximated uniformly on $M\setminus E$ by meromorphic $A$-embeddings $G \in \mathscr{I}_A(M|E)$ such that the difference map $G-F$ is continuous on $M$ and vanishes to any given finite order on $E$.

Moreover, if $\Lambda \subset M\setminus E$ is a finite set such that $F|_\Lambda \colon \Lambda \to \C^n$ is injective, then $G$ can be chosen to agree with $F$ on $\Lambda$ to any given finite order.
\end{theorem}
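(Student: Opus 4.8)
The plan is to follow the by-now standard ``general position'' argument for directed immersions (cf.\ \cite[\textsection 4]{alarcon2014null} and \cite[\textsection 3.6]{alarcon2021minimal}), adapting it to the meromorphic setting by working with the derivative $f=dF/\theta\colon M\setminus E\to A_*$ and always carrying along the fixed divisor data that encode the poles and the interpolation conditions. Fix a nowhere vanishing holomorphic $1$-form $\theta$ on $M$ (which exists on a compact bordered Riemann surface), write $dF=f\theta$, and fix the target divisor $D=\prod_{p\in E}p^{o(p)+1}\cdot\prod_{q\in\Lambda}q^{k}$ for the prescribed interpolation order $k$, where $o(p)$ bounds the pole orders of the components of $f$ at $p$. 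The self-intersection set of $F$, together with the points where $dF$ degenerates, is what we must remove; since $\dim M=1$ and we embed into $\C^n$ with $n\ge 3$, the expected dimension of the self-intersection locus of a generic perturbation is $2\cdot 1-n<0$, so the heuristic is exactly the classical transversality count that makes embeddings generic.

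First I would reduce to a perturbation problem governed by a \emph{dominating spray}. Using Theorem~\ref{th:Mergelyan} (the Mergelyan theorem for meromorphic $A$-immersions) applied on $M$ itself — or more directly Lemma~\ref{lemma:approx\&interp into A, full} and the spray construction inside its proof — one produces, after an arbitrarily small initial perturbation, a full meromorphic $A$-immersion and a holomorphic spray $F_\zeta\colon M\setminus E\to\C^n$, $\zeta$ in a ball $U\subset\C^N$, with $F_0$ equal to the given (slightly perturbed) map, such that: (a) each $F_\zeta$ lies in $\mathscr{I}_A(M|E)$ with $F_\zeta-F$ continuous on $M$ and vanishing to order $k$ on $\Lambda$ and to the prescribed order on $E$; (b) the spray is \emph{period dominating} so that the exactness needed to integrate is preserved for all $\zeta$; and crucially (c) the ``difference map'' $(x,y,\zeta)\mapsto F_\zeta(x)-F_\zeta(y)$ is a submersion onto $\C^n$ at every pair $x\ne y$ of points of $M\setminus E$ away from $\Lambda$ and $E$, and the ``tangent map'' $(x,\zeta)\mapsto dF_\zeta(x)/\theta$ is a submersion onto $A$ (or onto a fixed neighbourhood of $A_*$ in $\C^n$) at every $x$. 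Domination of this kind is obtained exactly as in the proof of Lemma~\ref{lemma:approx\&interp into A, full}: compose flows of the tangential vector fields $V_i$ weighted by functions that vanish to high order on $E\cup\Lambda$, so that both the interpolation constraints and the pole structure are automatically respected while the derivative of the spray in $\zeta$ covers all the directions needed.

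Next I would run the transversality / parametric Sard argument. On the complement of a small neighbourhood of $E\cup\Lambda$, the off-diagonal difference map $(x,y,\zeta)\mapsto F_\zeta(x)-F_\zeta(y)$ is transverse to $0\in\C^n$ by domination (c), so by the parametric transversality theorem the set of $\zeta\in U$ for which $x\mapsto F_\zeta(x)$ has a self-intersection in that region is of measure zero in $U$; similarly the set of $\zeta$ for which $dF_\zeta$ fails to be immersive somewhere in that region is negligible. The behaviour near $E$ and near $\Lambda$ is handled separately and by hand: near a pole $p\in E$ the map $F_\zeta$ tends to $\infty$, so after shrinking the neighbourhoods $F_\zeta$ is automatically injective there and maps those neighbourhoods off any fixed compact set, and a point of $E$ cannot be identified with any other point; near a point $q\in\Lambda$, $F_\zeta$ agrees with $F$ to order $k\ge1$, hence is a local embedding there for all $\zeta$, and since $F|_\Lambda$ is injective one can, after fixing finitely many small mutually disjoint neighbourhoods, ensure no collisions among the $\Lambda$-points or between a $\Lambda$-point and a far-away point for all small $\zeta$. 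Intersecting the (co-negligible) good sets of parameters and choosing $\zeta_0$ close to $0$ in their intersection yields $G:=F_{\zeta_0}$, which is a meromorphic $A$-embedding of class $\mathscr{I}_A(M|E)$, arbitrarily close to $F$ on $M\setminus E$, with $G-F$ continuous on $M$, vanishing to the prescribed order on $E$ and agreeing with $F$ to order $k$ on $\Lambda$.

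The main obstacle, and the place where the meromorphic setting genuinely differs from \cite{alarcon2014null}, is controlling the behaviour at and near the poles while keeping the spray within $\mathscr{I}_A(M|E)$: one must verify that the dominating spray can be built with all the perturbing functions vanishing to high enough order at $E$ (so the pole \emph{orders} are exactly preserved and the quotient by $g$ as in \eqref{eq:tildef0} still lands in $A_*$), that the period-domination is not spoiled by this constraint, and that the ``ends'' near $E$ — where $F_\zeta$ is unbounded — neither introduce spurious self-intersections with the compact part nor obstruct the transversality count. Once one observes that near each $p\in E$ the map $F_\zeta(x)=g(x)^{-1}h_0(x)+\cdots$ blows up uniformly in $\zeta$ and hence its image near $p$ eventually leaves any prescribed ball, this worry dissolves and the argument reduces cleanly to the compact core, where the classical dimension count applies verbatim.
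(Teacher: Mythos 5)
Your overall strategy (Mergelyan $+$ dominating spray built from flows of tangential vector fields weighted by functions vanishing to high order on $E\cup\Lambda$ $+$ Abraham/Sard transversality for the difference map) is essentially the same as the paper uses for its key Lemma~\ref{lemma:emb}, and the near-$\Lambda$ and pole-to-core arguments are fine. The gap is your treatment of self-intersections \emph{within} a punctured neighbourhood of a pole.

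Concretely, you assert that ``near a pole $p\in E$ the map $F_\zeta$ tends to $\infty$, so after shrinking the neighbourhoods $F_\zeta$ is automatically injective there.'' This is false once the pole order at $p$ is at least~$2$. If $F$ has a pole of order $k\ge 2$ at $p$, then in a local coordinate $F(z)=a z^{-k}+\dots$ with $a\ne 0$ behaves to leading order like a $k$-to-one cover of a neighbourhood of infinity, and whether or not the map is one-to-one on any punctured disc around $p$ depends on the lower-order terms, not on shrinking. Moreover, the perturbations you allow vanish to a prescribed high order $r$ at $E$, so they are uniformly small compared with the $z^{-k}$ growth; it is not automatic (and needs to be argued, not merely claimed) that such a perturbation resolves all accumulating self-intersections. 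This is precisely the subtle point: the off-diagonal difference map is being studied on the non-compact set $(M\setminus E)\times(M\setminus E)\setminus D$, and near $(p,p)$ the $\zeta$-derivative of $\delta F_\zeta$ degenerates because all perturbing functions vanish to order $r$ there, so a one-shot transversality statement is delicate.

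The paper avoids this difficulty by restructuring the argument: it first proves a version of the lemma (Lemma~\ref{lemma:emb}) in which self-intersections are removed only from the compact set $M\setminus\Delta$ for a small neighbourhood $\Delta$ of $E$, but with \emph{uniform} approximation on all of $M\setminus E$. Then Theorem~\ref{th:compact-embedding} is obtained by a recursion in which $\Delta=\Delta_j$ shrinks to $E$, the approximation errors $\varepsilon_j$ decay geometrically, and a Cauchy-estimate/stability-of-injectivity condition (property (4$_j$)) locks in that the limit $G$ is an embedding on every $M\setminus\Delta_j$, hence on $M\setminus E=\bigcup_j(M\setminus\Delta_j)$. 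This iterative-exhaustion step is the genuinely new structural idea here and is missing from your write-up. Without it (or an explicit, careful transversality statement that remains valid arbitrarily near the diagonal corner $(p,p)$ despite the high-order vanishing of the spray), your proof does not control self-intersections close to the poles.

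Two smaller remarks. First, you do not need the Oka property for this theorem (the paper does not use it in this section), so it is good that your argument is phrased locally via sprays; make sure not to import Oka approximation inadvertently. Second, you mention ensuring the derivative map $(x,\zeta)\mapsto dF_\zeta(x)/\theta$ is a submersion onto $A$ to keep immersivity; since each $F_\zeta$ is already built to lie in $\mathscr{I}_A(M|E)$, the immersion property is automatic, and what one needs for genericity is only the transversality of the difference map.
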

We emphasize that the approximation by embeddings is uniform in the non-compact surface $M\setminus E$.
We establish the theorem by a recursive application of the following result.
%
%
\begin{lemma} \label{lemma:emb}
Let $M$, $E$ and $A$ be as in Theorem~\ref{th:compact-embedding}.
Given an open neighbourhood $\Delta \subset \mathring M$ of $E$, every meromorphic $A$-immersion $F \colon M\setminus E \to \C^n$ of class $\mathscr{I}_A(M|E)$ can be approximated uniformly on $M\setminus E$ by a meromorphic $A$-immersion $G \in \mathscr{I}_A(M|E)$ such that $G|_{M\setminus \Delta}:M\setminus \Delta\to\C^n$ is injective, the difference map $G-F$ is continuous on $M$ and vanishes to any given finite order on $E$.

Moreover, if $\Lambda \subset M\setminus E$ is a finite set such that $F|_\Lambda \colon \Lambda \to \C^n$ is injective, then $G$ can be chosen to agree with $F$ on $\Lambda$ to any given finite order.
\end{lemma}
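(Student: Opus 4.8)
I would prove Lemma~\ref{lemma:emb} by a general-position argument, exploiting that for $n\ge 3$ the self-intersection locus of an immersed surface in $\C^n\cong\R^{2n}$ has negative expected dimension: a suitably dominating small perturbation within $\mathscr{I}_A(M|E)$ generically removes all double points lying off a fixed neighbourhood of $E$.

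\emph{Reductions.} First I would apply Theorem~\ref{th:Mergelyan} with $S=M$ (viewing the compact bordered surface $M$ as an admissible Runge set in a slightly larger open Riemann surface) to reduce to the case where $F$ is \emph{full}; since approximating and interpolating a full $A$-immersion by $A$-embeddings then yields the same for $F$, no generality is lost. Next, because shrinking $\Delta$ only strengthens the conclusion, I would take $\Delta$ to be a disjoint union of small smoothly bounded discs around the points of $E$, disjoint from $\Lambda$, so that $M':=M\setminus\Delta$ is a compact bordered Riemann surface on which $F$ restricts to a holomorphic $A$-immersion. As $F|_{M'}$ is an immersion of a compact surface, there is $\eta>0$ with $F$ injective on every ball of radius $\eta$, and a $\mathscr{C}^1$-small perturbation keeps this; hence it remains only to separate the values on the compact set $K_\eta:=\{(x,y)\in M'\times M':\dist(x,y)\ge\eta\}$, which is disjoint from the diagonal.

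\emph{The dominating spray.} Fix a nowhere vanishing holomorphic $1$-form $\theta$ and put $f=dF/\theta$. Following the spray construction in the proof of Lemma~\ref{lemma:approx&interp into A, full} — compositions of flows of holomorphic vector fields on $\C^n$ tangent to $A$, vanishing at $0$ and spanning $T_zA$ for every $z\in A_*$, multiplied by holomorphic functions on a neighbourhood of $M$ vanishing to order $\gg r$ on $E\cup\Lambda$, and divided by a fixed holomorphic function on $M$ chosen to clear the poles of $f$ at $E$ — I would build a holomorphic spray $\{F_\zeta\}_{\zeta\in B}$, $B\subset\C^{N}$ a ball around $0$, with $F_0=F$, each $F_\zeta$ a full meromorphic $A$-immersion of class $\mathscr{I}_A(M|E)$, with $F_\zeta-F$ continuous on $M$, vanishing to order $r$ on $E\cup\Lambda$, and uniformly small with $\zeta$. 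The parameters are of two kinds: period-dominating directions ensuring $(f_\zeta-f)\theta$ stays exact, so that $F_\zeta:=F+\int(f_\zeta-f)\theta$ is well defined and single-valued; and difference-dominating directions arranged so that, for every $(x,y)\in K_\eta$, the partial differential of $\zeta\mapsto F_\zeta(x)-F_\zeta(y)$ at $\zeta=0$ is surjective onto $\C^n$. For a fixed pair this is obtained by concentrating test functions near points of a path from $x$ to $y$ in $M'\setminus(E\cup\Lambda)$ and invoking fullness of $F$ along that path (when $x\in\Lambda$ the perturbation is pinned at $x$ and one uses instead surjectivity of $\zeta\mapsto F_\zeta(y)$ alone); by compactness of $K_\eta$ and openness of the maximal-rank condition, finitely many test functions handle all pairs simultaneously. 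After shrinking $B$, the period map stays submersive — so all $F_\zeta$ are genuine elements of $\mathscr{I}_A(M|E)$ — and $\Psi(\zeta,x,y):=F_\zeta(x)-F_\zeta(y)$ is a submersion on all of $B\times K_\eta$.

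\emph{Conclusion and the main obstacle.} Being a submersion, $\Psi\colon B\times K_\eta\to\C^n$ is transverse to $0$, so $\Psi^{-1}(0)$ is a submanifold of real dimension $\dim_\R B+4-2n$, which is strictly less than $\dim_\R B$ precisely because $n\ge 3$. Taking $B$ a closed ball, $B\times K_\eta$ is compact, hence the projection of $\Psi^{-1}(0)$ to $B$ has measure zero; choosing $\zeta_0\in B$ close to $0$ outside this set gives $F_{\zeta_0}(x)\ne F_{\zeta_0}(y)$ for all $(x,y)\in K_\eta$, while for $\zeta_0$ small $F_{\zeta_0}$ is still injective on every ball of radius $\eta$. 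Thus $G:=F_{\zeta_0}$ is injective on $M\setminus\Delta$; it lies in $\mathscr{I}_A(M|E)$, the difference $G-F$ vanishes to order $r$ on $E\cup\Lambda$ (so it extends holomorphically across $E$), and by the maximum principle on $M$ it is uniformly small on $M\setminus E$, the ``moreover'' statement being part of the construction. The delicate point is the spray step: producing a single finite-dimensional spray inside the affine family of meromorphic $A$-immersions with prescribed poles and prescribed jets at $E\cup\Lambda$ that is at once period-dominating and difference-dominating uniformly over the compact set $K_\eta$, with the bookkeeping of the pole-clearing factor, the interpolation orders, and the degenerate pairs with $x\in\Lambda$ kept straight; this, however, follows the template already established in the proof of Lemma~\ref{lemma:approx&interp into A, full} and in~\cite{alarcon2014null}.
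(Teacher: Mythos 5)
Your proposal follows essentially the same route as the paper: build a holomorphic spray $\{F_\zeta\}$ of meromorphic $A$-immersions, full, period-dominating (so each $F_\zeta$ stays in $\mathscr{I}_A(M|E)$ with the prescribed jets at $E\cup\Lambda$) and difference-dominating over the compact set of pairs uniformly $\eta$-separated in $M\setminus\Delta$, then invoke Abraham-style transversality and the dimension count $4-2n<0$ for $n\ge 3$. The only cosmetic differences are that you make the reduction to a full $F$ via Theorem~\ref{th:Mergelyan} explicit up front and replace the paper's neighbourhood $U$ of the diagonal by the equivalent $\eta$-separation device; otherwise the key lemma (the pairwise spray of Claim~\ref{cl:H(p,q)}), the finite-cover composition of sprays, and the genericity argument are the same.
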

The key condition in the lemma is that, while the self-intersections are eliminated only from a compact subset, namely, $M\setminus \Delta$, the approximation is uniform on $M\setminus E$. The proof relies on the classical Abraham transversality argument~\cite{abraham1963transversality},~\cite[Theorem~1.4.3]{alarcon2021minimal} together with the method of sprays.
%
%
\begin{proof}[Proof of Theorem~\ref{th:compact-embedding} assuming Lemma~\ref{lemma:emb}]
We may assume that $M$ is a smoothly bounded compact domain in the interior of a compact bordered Riemann surface $\wt M$ and, by Theorem~\ref{th:Mergelyan}, that $F \in \mathscr{I}_A(\wt M|E)$.
Let $\Lambda\subset M\setminus E$ be a finite set such that $F|_\Lambda$ is injective.
Choose a sequence of open sets $\Delta_j \subset \mathring M$ ($j\in\N$) such that $E\subset \Delta_j$, $\Delta_j \cap \Lambda = \varnothing$ and $\overline \Delta_{j+1} \subset \Delta_j$ for all $j$, making sure that 
\begin{equation}\label{eq:Deltaj}
	\bigcup_{j\ge 1}\left(M \setminus \Delta_j\right) = M\setminus E.
\end{equation}
We choose each $\Delta_j$ to be a finite union of mutually disjoint smoothly bounded discs centred at the points in $E$.
Let $F^0:=F$ and fix $\varepsilon_0>0$.
We claim that there is a sequence of numbers $\varepsilon_j>0$ and meromorphic $A$-immersions $F^j \in \mathscr{I}_A(\wt M|E)$, $j\in\N$, such that the following hold for every $j\in \N$.
\begin{itemize}
\item[\rm (1$_j$)] $F^{j}-F^{j-1}$ is continuous on $\wt M$ and $||F^{j}-F^{j-1}||_{\wt M}<\varepsilon_{j-1}$.

\item[\rm (2$_j$)] $F^j-F$ is continuous on $\wt M$ and vanishes to any given finite order on $E\cup\Lambda$. 

\item[\rm (3$_j$)] $F^j$ is injective on $\wt M\setminus \Delta_j$.

\item[\rm (4$_j$)] $0<\varepsilon_j<\varepsilon_{j-1}/2$, and if $G\colon \wt M\setminus E \to \C^n$ is a holomorphic map with $||G-F^j||_{\wt M\setminus E}<2\varepsilon_j$, then $G$ is an embedding on $M\setminus \Delta_{j}$.
\end{itemize}
Indeed, we proceed by induction. The base is provided by the immersion $F=F^0$ and the number $\varepsilon_0$ chosen at the beginning of the proof. Observe that  {\rm (2$_0$)} holds.
For the inductive step, assume that for some fixed $j\in \N$ we have already found a number $\varepsilon_{j-1}>0$ and a map $F^{j-1}\in \mathscr{I}_A(\wt M|E)$ satisfying {\rm (2$_{j-1}$)}.
Lemma~\ref{lemma:emb} applied to $(\wt M, E, A, F^{j-1}, \Delta_{j}, \varepsilon_{j-1})$ then furnishes us with a meromorphic $A$-immersion $F^{j} \in \mathscr{I}_A(\wt M|E)$ satisfying \rm (1$_j$)--\rm (3$_j$). Then, in view of {\rm (3$_j$)}, the existence of a number $\varepsilon_j$ fulfilling {\rm (4$_j$)} is granted by Cauchy estimates. This closes the induction.

By {\rm (1$_j$)},~{\rm (2$_j$)},~{\rm (4$_j$)} and the properties of $\Delta_j$, there exists the limit map $G = \lim_{j\to\infty}F^j\colon \wt M\setminus E \to \C^n$, which is meromorphic on $\wt M$ with effective poles precisely in $E$ and satisfies that $G-F$ is continuous on $\wt M$.
In fact, for each $j\ge 0$ we have that $G-F^j$ is continuous on $\wt M$ and
\begin{equation} \label{eq:proof-emb-G-Fj}
||G-F^j||_{\wt M} \leq \sum_{k=j}^{\infty} ||F^{k+1}-F^{k}||_{\wt M} \stackrel{\text{\rm (1$_{k+1}$)}}{<} \sum_{k=j}^{\infty} \varepsilon_{k} \stackrel{\text{\rm (4$_{j}$)}}{<} 2\varepsilon_{j}.
\end{equation}
As a consequence, by property \rm (4$_j$), $G \in \mathscr{I}_A(M|E)$ and is injective on $M\setminus \Delta_{j}$ for all $j\ge 1$. By~\eqref{eq:Deltaj}, it turns out that $G$ is a meromorphic $A$-embedding on $M$.
Finally, condition~\rm (2$_j$) ensure that $G-F$ vanishes to the given finite order on $E\cup\Lambda$, whereas the approximation of $F$ by $G$ is guaranteed by~\eqref{eq:proof-emb-G-Fj}, closing the proof.
\end{proof}
%
%
%
\begin{proof}[Proof of Lemma~\ref{lemma:emb}]
The proof follows closely the arguments in that of~\cite[Theorem~2.4]{alarcon2014null} and~\cite[Theorem~3.4.1]{alarcon2021minimal} for the case $E=\varnothing$. We shall only point out the modifications which are necessary to deal with the poles and explain the first part of the lemma; the second part concerning interpolation on $\Lambda$ is established exactly as in the proof of~\cite[Theorem~3.4.1]{alarcon2021minimal} and we leave it out. 

So, let $\Delta \subset \mathring M$ be a small open neighbourhood of $E$ in $\mathring M$ as in the statement and assume that $\wt M := M\setminus \Delta$ is a smoothly bounded compact domain.
Define the difference map $\delta F \colon (M\setminus E) \times (M\setminus E) \to \C^n$ by
\begin{equation*} 
\delta F(p,q) := F(p)-F(q), \quad p,q\in M\setminus E.
\end{equation*}
Let $D_{\wt M} = \{(p,p) \colon p \in \wt M \}$ be the diagonal of $\wt M \times \wt M$. Since $F$ is an immersion on $M\setminus E$ and $\wt M\subset M\setminus E$ is compact, there exists an open neighbourhood $U \subset \wt M \times \wt M$ of $D_{\wt M}$ such that $\delta F$ does not take the value $0\in \C^n$ on $\overline{U}\setminus D_{\wt M}$.
Our aim is to find a meromorphic $A$-immersion $G \colon M\setminus E\to \C^n$ of class $\mathscr{I}_A(M|E)$ such that $G-F$ is continuous and arbitrarily close to $0$ uniformly on $M$, $G-F$ vanishes to the given finite order on $E$, and the difference map $\delta G$ is transverse to $0\in \C^n$ on $(\wt M \times \wt M)\setminus U$. By dimension reasons and the Abraham transversality argument~\cite{abraham1963transversality}, provided that $G$ is close enough to $F$ on $\overline U$, then $G$ results to be injective on $\wt M = M\setminus \Delta$.
For this, we need to find a neighbourhood $\Omega \subset \C^N$ of $0\in \C^N$ (for some large $N\in \N$) and a holomorphic map $H \colon (M\setminus E)\times \Omega \to \C^n$ meeting the following.
\begin{itemize}
\item
$H(\cdot,0)=F$ and for every $t\in\Omega$, the map $H(\cdot,t):M\setminus E\to\C^n$ is a meromorphic $A$-immersion of class $\mathscr{I}_A(M|E)$, the difference $H(\cdot,t)-F$ is continuous on $M$ and vanishes to the given finite order on $E$.

\item
The difference map $\delta H \colon (M\setminus E)\times (M\setminus E)\times \Omega \to \C^n$, defined by
	\begin{equation} \label{eq:diff-map} 
	\delta H(p,q,t) := H(p,t) - H(q,t), \quad (p,q,t) \in (M\setminus E) \times (M\setminus E) \times \Omega,
	\end{equation}
	is such that the differential map 
	\begin{equation*}
	\partial_{t}|_{t=0} \delta H\left(p,q,t\right) \colon \C^N \longrightarrow \C^n
	\end{equation*}
	is surjective for every $(p,q) \in (\wt M \times \wt M)\setminus U$.
\end{itemize}
That being the case, the map $G=H(\cdot,t)$ satisfies the conclusion for almost every parameter $t\in\Omega$ close enough to $0$.

The main step to achieve that is given by the following analogue of~\cite[Lemma~6.1]{alarcon2014null}.
%
%
\begin{claim}\label{cl:H(p,q)}
For any pair of points $p,q\in M\setminus E$, $p\neq q$, there is a deformation family $H=H^{(p,q)}(\cdot,t)$ as above, $t\in\C^n$, such that $\partial_t|_{t=0}\delta H(p,q,t):\C^n\to\C^n$ is an isomorphism.
\end{claim}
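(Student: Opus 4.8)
\emph{Approach.} The plan is to follow the proof of \cite[Lemma~6.1]{alarcon2014null}, of which this claim is the meromorphic analogue (see also \cite[proof of Theorem~3.4.1]{alarcon2021minimal}), while reusing the spray machinery developed in the proof of Lemma~\ref{lemma:approx&interp into A, full} — in particular the period-dominating spray of Claim~\ref{claim:period spray} — to keep periods \emph{and} poles under control. Recall that we may assume $F$ to be full (replacing it, if necessary, by a nearby full member of $\mathscr I_A(M|E)$ via Theorem~\ref{th:Mergelyan}, which does not affect the approximation statement of Lemma~\ref{lemma:emb}). Fix a nowhere vanishing holomorphic $1$-form $\theta$ on $M$, write $dF=f\theta$ with $f\in\mathscr A_\infty(M|E,A_*)$ full, and fix a base point $p_0\in\mathring M\setminus(E\cup\{p,q\})$. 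Choose holomorphic vector fields $V_1,\dots,V_N$ on $\C^n$ that are tangent to $A$ along $A$, vanish at $0$, and with $\span\{V_1(z),\dots,V_N(z)\}=T_zA$ for all $z\in A_*$; let $\phi^i_s$ denote the flow of $V_i$ for $|s|$ small. Since $f$ is full, there are pairwise distinct points $x_1,\dots,x_n\in\mathring M\setminus(E\cup\{p,q,p_0\})$ and indices $i_1,\dots,i_n$ with $\{V_{i_1}(f(x_1)),\dots,V_{i_n}(f(x_n))\}$ a basis of $\C^n$. Pick a smooth Jordan arc $\gamma$ from $q$ to $p$ in $\mathring M\setminus(E\cup\{p_0\})$ whose interior contains the $x_k$, and a skeleton $\mathscr C=\{C_1,\dots,C_l\}$ of $M$ based at $E\cup\{p_0\}$ such that $C=\bigcup_jC_j$ is Runge, a strong deformation retract of $M$, disjoint from $\{p,q\}$, and meets $\gamma$ transversely in finitely many points; let $\mathscr P=(\mathscr P_1,\dots,\mathscr P_l)$ be the period map \eqref{eq:period map} associated to $\mathscr C$, $f$, $\theta$, so that $\mathscr P(f)=0$.

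\emph{Construction.} First I would build, exactly as in Claim~\ref{claim:period spray} but with the perturbation supported on small pairwise disjoint sub-arcs $\gamma_j\subset\mathring C_j\setminus\gamma$ — hence away both from $\gamma$ and from $E$, which makes the auxiliary function $g$ used there unnecessary — a full spray of meromorphic maps into $A_*$ on a neighbourhood of $M$, with parameter $\zeta\in\C^{Nl}$ near $0$ and core $f$, whose difference with $f$ is holomorphic near $M$ and vanishes to the prescribed order on $E$, and for which $\zeta\mapsto\mathscr P$ is a submersion at $\zeta=0$. Around each $x_k$ I would then insert an extra flow $\phi^{i_k}_{t_k h_k(\cdot)}$, where $h_k$ is continuous with support in a small disc $D_k\ni x_k$ chosen disjoint from $C$, from the other $D_{k'}$ and from $E$, and normalised so that $\int_\gamma h_k\,(V_{i_k}\!\circ f)\,\theta\approx V_{i_k}(f(x_k))$, with $t=(t_1,\dots,t_n)\in\C^n$ near $0$. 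Applying all these flows to $f$ and replacing the continuous building blocks by holomorphic ones vanishing to sufficiently high order on $E$ yields a full spray $\Psi(\zeta,t,\cdot)$, defined and meromorphic into $A_*$ on a neighbourhood of $M$ with effective poles on $E$, with $\Psi(0,0,\cdot)=f$, with $\Psi(\zeta,t,\cdot)-f$ holomorphic near $M$ and vanishing to the prescribed order on $E$, with $\zeta\mapsto\mathscr P(\Psi(\zeta,0,\cdot))$ still submersive at $\zeta=0$, and with $\Psi(\zeta,t,\cdot)$ uniformly close along $\gamma$ to $\phi^{i_1}_{t_1h_1}\circ\cdots\circ\phi^{i_n}_{t_nh_n}(f)$. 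By the implicit function theorem there is a holomorphic $t\mapsto\zeta(t)$ near $0$ with $\zeta(0)=0$ and $\mathscr P(\Psi(\zeta(t),t,\cdot))=0$ for all small $t$. Since $C$ generates the first homology of a neighbourhood of $M$, the vanishing of $\mathscr P$ makes the holomorphic form $(\Psi(\zeta(t),t,\cdot)-f)\theta$ exact near $M$; adding $dF=f\theta$, which is exact on $M\setminus E$, shows $\Psi(\zeta(t),t,\cdot)\theta$ is exact on $M\setminus E$ and (its polar part coinciding with that of $dF$) defines a meromorphic $A$-immersion
\[
H^{(p,q)}(x,t):=F(p_0)+\int_{p_0}^x\Psi(\zeta(t),t,\cdot)\,\theta\ \in\ \mathscr I_A(M|E),\qquad H^{(p,q)}(\cdot,0)=F.
\]

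\emph{Verification and main difficulty.} The difference $H^{(p,q)}(\cdot,t)-F$ is the primitive, vanishing at $p_0$, of the holomorphic form $(\Psi(\zeta(t),t,\cdot)-f)\theta$; integrating from $p_0$ to a point $e\in E$ along the graph $C$ and using $\mathscr P(\Psi(\zeta(t),t,\cdot))=0$ shows this primitive vanishes at $e$, and since $\Psi(\zeta(t),t,\cdot)-f$ vanishes to the prescribed order on $E$, so does $H^{(p,q)}(\cdot,t)-F$; moreover $\|H^{(p,q)}(\cdot,t)-F\|_M$ is as small as desired for $t$ small. Finally, exactness gives $\delta H^{(p,q)}(p,q,t)=H^{(p,q)}(p,t)-H^{(p,q)}(q,t)=\int_\gamma\Psi(\zeta(t),t,\cdot)\theta$, hence $\partial_{t_k}|_{t=0}\delta H^{(p,q)}(p,q,t)=\int_\gamma\partial_{t_k}|_{t=0}\Psi(\zeta(t),t,\cdot)\,\theta$; because the $\zeta$-perturbations are supported away from $\gamma$, this vector is uniformly close to $\int_\gamma h_k\,(V_{i_k}\!\circ f)\,\theta\approx V_{i_k}(f(x_k))$, so the matrix of $\partial_t|_{t=0}\delta H^{(p,q)}(p,q,\cdot)$ is close to an invertible one and is therefore invertible once all approximations are accurate enough, which proves the claim. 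The part I expect to be delicate is the simultaneous bookkeeping, in the meromorphic setting, of: (a) exactness of $\Psi(\zeta(t),t,\cdot)\theta$ on $M\setminus E$, handled by the period-dominating spray and the implicit function theorem; (b) the exact vanishing to the prescribed order of $H^{(p,q)}(\cdot,t)-F$ on $E$, handled by basing $\mathscr C$ at $E\cup\{p_0\}$ and taking primitives along $C$; and (c) the spanning of the $n$ domination directions, handled by fullness of $F$, the choice of $\gamma$ through the points $x_k$, and the negligible contribution of the period corrections along $\gamma$. With these in place, the remainder of the proof of Lemma~\ref{lemma:emb} — the Abraham transversality argument — proceeds exactly as in the case $E=\varnothing$.
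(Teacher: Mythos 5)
Your proposal follows essentially the same strategy as the paper's proof of this claim: assume $F$ full, take an arc $\Upsilon$ (your $\gamma$) from $q$ to $p$, pick a skeleton based at $E\cup\{p_0\}$ and kept disjoint from $\Upsilon$, build a period-dominating spray supported along the skeleton (as in Claim~\ref{claim:period spray}), add domination directions by placing extra flow perturbations at points $x_k\in\gamma$ where fullness gives a spanning family $V_{i_k}(f(x_k))$, slice out the period constraint via the implicit function theorem to obtain a family with parameter $t\in\C^n$, and then integrate along $\gamma$ to see that $\partial_t|_{t=0}\delta H(p,q,t)$ is invertible. The three bookkeeping points you flag at the end are exactly the ones the paper manages, and your treatment of them is correct.

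The one place where you depart from the paper, and where there is a genuine (though fixable) gap, is the assertion that the auxiliary function $g$ with $(g)=\prod_{p\in E}p^{o(p)}$ can be dropped because the perturbations are supported away from $\gamma$ and from $E$. The paper keeps $g$ precisely here: it applies the flows of $V_1,\dots,V_N$ to the \emph{bounded} holomorphic map $\tilde f_0=fg\in\mathscr O(M,A_*)$, getting a holomorphic spray with a uniform parameter neighbourhood on a compact set, and only afterwards divides by $g$ to produce a meromorphic map with poles exactly in $E$. In your variant the flows $\phi^i_s$ are applied directly to the unbounded map $f$. This is problematic for two reasons. First, the $V_i$ are only required to be holomorphic, to vanish at $0$, and to span $T_zA$ for $z\in A_*$; they need not be complete, so the admissible time interval $\{|s|<\delta_i(z)\}$ of $\phi^i_s(z)$ shrinks as $|z|\to\infty$, and near a pole of $f$ one must quantify how fast the $h$'s vanish at $E$ relative to the degrees of the $V_i$ and the pole orders of $f$; ``vanishing to sufficiently high order'' silently hides this quantitative issue. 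Second, and more seriously, a nonlinear flow applied to a meromorphic map need not produce a meromorphic map: for instance the time-$s$ flow of $z^2\partial_z$ on $\C$ is $z\mapsto z/(1-sz)$, which kills a pole and can create new singularities at interior points. Introducing $g$, perturbing the bounded map $\tilde f_0=fg$, and dividing by $g$ at the end eliminates both difficulties at once, which is exactly why the paper does it (compare also the order bookkeeping via $r_0$ in \eqref{eq:r0}). Restoring $g$ in your construction makes the argument correct and brings it in line with the paper's proof.
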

\begin{proof}
Let $\Upsilon\subset M\setminus E$ be a smooth embedded arc connecting $q$ to $p$. Choose a point $p_0\in M\setminus (E\cup\Upsilon)$ and a skeleton $\{C_1,\ldots,C_l\}$ of $M$ based at $E\cup\{p_0\}$ (see Definition~\ref{def: skeleton}) such that $C=\bigcup_{j=1}^l C_j\subset M\setminus \Upsilon$.  
Let $\theta$ be a holomorphic 1-form vanishing nowhere on $M$ and write $dF=f\theta$. 
Choose $g\in\Oscr(M)$ as in~\eqref{eq:(g)} and define $\tilde f_0$ as in~\eqref{eq:tildef0}, so $\tilde f_0\in\Oscr(M,A_*)$. A modification of the proof of~\cite[Lemma~6.1]{alarcon2014null}, using the ideas in the proof of Claim~\ref{claim:period spray}, provides a $\C^n$-valued $1$-form
\[
	\Theta_{\tilde f_0}(t,x,v),\quad x\in M,\; v\in T_xM,
\]
depending holomorphically on a parameter $t=(t_1,\ldots,t_n)\in\C^n$ on a neighbourhood of  $0\in\C^n$, such that for every such $t$ the difference 1-form $\Theta_{\tilde f_0}(t,\cdot,\cdot)/g-f\theta$ is holomorphic on $M$, it vanishes to the given finite order at every point in $E$, and
\[
	\int_{C_j}\left(\Theta_{\tilde f_0}(t,\cdot,\cdot)/g-f\theta\right)=0\quad \text{ for }j=1,\ldots,l
\]
(cf.~\eqref{eq:sprayQ} and~\eqref{eq:Qtilde}).
Moreover, $\Theta_{\tilde f_0}(t,\cdot,\cdot)/\theta$ takes its values in $A_*$, and, defining
\[
	H_F(x,t):=F(p_0)+\int_{p_0}^x\Theta_{\tilde f_0}(t,\cdot,\cdot)/g,\quad x\in M\setminus E
\]
(cf.~\cite[Eq.~(6.6)]{alarcon2014null}), it follows that $H_F(\cdot,0)=F$ and $H_F(\cdot,t):M\setminus E\to\C^n$ is a well-defined meromorphic $A$-immersion of class $\mathscr{I}_A(M|E)$ such that $H_F(\cdot,t)-F$ is continuous on $M$ and vanishes to the given finite order on $E$ for every $t\in\C^n$ sufficiently close to $0$. Furthermore, we may ensure that the vectors
\[
	\frac{\partial}{\partial t_i}\Big|_{t=0}\int_0^1\Theta_{\tilde f_0}\big(t,\upsilon(s),\dot\upsilon(s)\big)/g\in\C^n,\quad i=1,\ldots,n,
\]
are $\C$-linearly independent, where $\upsilon:[0,1]\to\Upsilon$ is a smooth parametrization of the arc $\Upsilon$ connecting $q$ to $p$. Since
\[
	\delta H_F(p,q,t)=H_F(p,t)-H_F(q,t)=\int_0^1\Theta_{\tilde f_0}\big(t,\upsilon(s),\dot\upsilon(s)\big)/g,
\]
the latter implies that the partial differential $\partial_t|_{t=0}\delta H_F(p,q,t)$ is an isomorphism. We refer to the proof of~\cite[Lemma~6.1]{alarcon2014null} for further details.
\end{proof} 

As $H^{(p,q)}$ satisfies that $\partial_t|_{t=0}\delta H^{(p,q)}(\tilde p,\tilde q,t):\C^n\to\C^n$ is an isomorphism for any pair of points $\tilde p\approx p$ and $\tilde q\approx q$ in $M\setminus E$, $\tilde p\neq \tilde q$, we may consider a finite open covering $\mathscr{U}=\{U_i\}_{i=1}^{m}$ of the compact set $(\wt M \times \wt M)\setminus U\subset M\times M$ and the corresponding families of maps $\{H^{i}(\cdot,t^{i}) \colon M\setminus E \to \C^n\}_{i=1}^m$, where $t^i \in \Omega_i \subset \C^n$  and $\Omega_i$ is a neighbourhood of $0 \in \C^n$, such that $H^i(\cdot,t^i)$ is a meromorphic $A$-immersion of class $\mathscr{I}_A(M|E)$, the difference $H^{i}(\cdot,t^{i})-F$ is continuous on $M$ and vanishes to any given finite order on $E$ for every $t^i \in \Omega_i$, $H^i(\cdot,0)=F$, and $\delta H^i(p,q,t^i) \colon \C^n \to \C^n$ is submersive at $t^i=0$ for every $(p,q) \in U_i$.
It turns out that the composition map
\begin{equation*}
H(p,t) = \left( H^1 \sharp \cdots \sharp H^m \right) (p,t^1,\dots,t^m), \quad p \in M\setminus E, \ t=(t^1,\dots,t^m) \in \C^N,
\end{equation*}
defined analogously to those in the proofs of \cite[Theorem~2.4]{alarcon2014null} and \cite[Theorem~3.4.1]{alarcon2021minimal}, satisfies $H(\cdot,0)=F$ and its difference map $\delta H$~\eqref{eq:diff-map} is submersive on $(\wt M \times \wt M)\setminus U$ for all $t=(t^1,\dots,t^m)$ close to $0\in \C^N$ (here, $N=mn$). Moreover, $H(\cdot,t)$ is a meromorphic $A$-immersion of class $\mathscr{I}_A(M|E)$, $H(\cdot,t)-F$ is continuous on $M$ and vanishes to any given finite order on $E$ for every $t\in \Omega \subset \C^N$ (where $\Omega \subset \C^N$ is a neighbourhood of $0$ which depends on all $\Omega_i$).

The proof is now finished by the aforementioned transversality argument.
\end{proof}


\section{The Mittag-Leffler theorem for meromorphic $A$-immersions} \label{sec:Mittag-Leffler}

In this section we establish the Mittag-Leffler theorem with approximation and interpolation for directed meromorphic immersions by proving the following more precise version of the first part of Theorem~\ref{th:intro-ML-DirMer}, including also condition~{\rm (iv)}. We shall complete the proof of Theorem~\ref{th:intro-ML-DirMer} by checking assertions~ {\rm (v)} and~{\rm (vi)}, concerning global geometry, in Section~\ref{sec:properness} (see Theorem~\ref{th:Mittag-Leffler-properness}).

\begin{theorem} \label{th:Mittag-Leffler}
Assume that $A\subset\C^n$ $(n\ge 3)$, $M$ and $E$ are as in Theorem~\ref{th:intro-ML-DirMer}. Let $U \subset M$ be a locally connected closed neighbourhood of $E$ whose connected components are all Runge admissible compact subsets in $M$, $\Lambda \subset U \setminus E$ be a closed discrete subset of $M$, $\varepsilon >0$ a number, and $r \colon E\cup \Lambda \to \N$ a map.
Given a map $F \colon U\setminus E \to \C^n$ such that $F|_{W\setminus (E\cap W)} \colon W\setminus (E\cap W) \to \C^n$ is of class $\mathscr{I}_A(W|E\cap W)$ for each component $W$ of $U$, there exists a full meromorphic $A$-immersion $G \colon M\setminus E \to \C^n$ of class $\mathscr{I}_A(M|E)$ satisfying the following conditions.
\begin{itemize}
\item[\rm (i)] $G-F$ is continuous on $E$, that is, it extends continuously to $U$.

\item[\rm (ii)] $G-F$ vanishes on $\Lambda \setminus \mathring U$ and vanishes to order $r(p)$ at $p \in E \cup (\Lambda \cap \mathring U)$.

\item[\rm (iii)] $||G-F||_U<\varepsilon$.
\end{itemize}
Furthermore, if $F$ is injective on $\Lambda$, then $G$ can be chosen an embedding.
\end{theorem}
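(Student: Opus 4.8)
The plan is to build $G$ by a standard Mittag--Leffler exhaustion, using Proposition~\ref{prop:semiglob} as the semiglobal step on each piece of the exhaustion, Theorem~\ref{th:Mergelyan} to pass between admissible sets and their neighbourhoods, and Theorem~\ref{th:compact-embedding} (via Lemma~\ref{lemma:emb}) interleaved into the recursion to remove self-intersections when $F|_\Lambda$ is injective. First I would reduce to the case where $U$ itself is a Runge admissible compact set: since the components of $U$ form a locally finite family of pairwise disjoint Runge admissible compacta, one may exhaust $M$ by a sequence of smoothly bounded Runge compact domains $M_0\subset M_1\subset\cdots$ with $\bigcup_j M_j=M$, chosen so that each $M_j$ contains $U_j:=U\cap M_j$ (a finite union of the components of $U$, hence Runge admissible) in its interior, and so that at most one handle or one new component is attached in passing from $M_j$ to $M_{j+1}$, as in the proof of Proposition~\ref{prop:semiglob}. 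Set $E_j:=E\cap M_j$, $\Lambda_j:=\Lambda\cap M_j$; by local finiteness each of these is finite.

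Next I would run the induction. The base object is $F^0:=F|_{U_0\setminus E_0}\in\mathscr{I}_A(U_0|E_0)$ (treating $U_0$ componentwise), together with a small $\varepsilon_0>0$. Assuming $F^{j-1}\in\mathscr{I}_A(M_{j-1}|E_{j-1})$ has been constructed, I would first extend $F^{j-1}$ across the change of topology exactly as in Proposition~\ref{prop:semiglob} (extend the derivative $f^{j-1}$ over an attached arc with the correct period, or over an attached disc by any full $A$-immersion), obtaining a meromorphic $A$-immersion on an admissible set $S_j\subset M_j$ that is a deformation retract of $M_j$; I would enlarge $S_j$ to also contain $U_j$ (possible since the components of $U_j$ not already met are new Runge pieces, on which we glue in the prescribed $F$ there). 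Then Theorem~\ref{th:Mergelyan} followed by Proposition~\ref{prop:semiglob} — applied with interpolation set $E_j\cup\Lambda_j$ and orders $r(\cdot)+\text{(accumulated)}$, and with the fixed data being $F$ on the components of $U_j$ — produces a full $F^j\in\mathscr{I}_A(M_j|E_j)$ with $\|F^j-F^{j-1}\|_{M_{j-1}}<\varepsilon_{j-1}$, with $F^j-F$ continuous on $U_j$, vanishing on $\Lambda_j\setminus\mathring U$ and to the prescribed orders at $E_j\cup(\Lambda_j\cap\mathring U)$, and with $\|F^j-F\|_{U_j}<\varepsilon_j':=\varepsilon(1-2^{-j})$ say. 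If $F|_\Lambda$ is injective, I would immediately post-compose with Lemma~\ref{lemma:emb} applied to $(M_j,E_j,A,F^j,\Delta_j,\cdot)$ where $\Delta_j\supset E_j$ is a shrinking family of small discs disjoint from $\Lambda_j$, so that the new $F^j$ is in addition injective on $M_{j-1}\setminus\Delta_j$ while retaining all approximation and interpolation conditions; the standard choice of a Cauchy-estimate threshold $\varepsilon_j<\varepsilon_{j-1}/2$ guaranteeing that any map $\varepsilon_j$-close to $F^j$ on $M_j\setminus E_j$ is an immersion (and an embedding off $\Delta_j$) on $M_{j-1}$ closes the step.

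The limit $G=\lim_j F^j$ then exists uniformly on compacta of $M\setminus E$, lies in $\mathscr{I}_A(M|E)$, is full, satisfies $\|G-F\|_U<\varepsilon$, the interpolation conditions (ii), and, in the injective case, is an embedding on $M\setminus E=\bigcup_j(M_j\setminus\Delta_j)$ by the usual argument (cf.\ the proof of Theorem~\ref{th:compact-embedding}). The condition (i) that $G-F$ be continuous on $U$ is automatic once it is continuous on each $U_j$, which the Mergelyan/semiglobal steps deliver.

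The main obstacle I anticipate is bookkeeping the interpolation orders and the continuity-across-$E$ condition simultaneously with the embedding correction: each application of Lemma~\ref{lemma:emb} must preserve the already-achieved vanishing of $F^j-F$ on $E_j\cup\Lambda_j$ (which it does, since Lemma~\ref{lemma:emb} allows prescribing agreement with the previous map to any finite order on $E_j$ and on $\Lambda_j$), but one must be careful that shrinking $\Delta_j$ does not interfere with the poles or with points of $\Lambda$, and that the orders demanded at later stages are never smaller than those demanded earlier so that the telescoping limit genuinely interpolates $r(p)$ at each $p$. The second delicate point, as in Proposition~\ref{prop:semiglob}, is handling the topology changes when attaching a $1$-handle whose endpoints lie in different components — there the homology does not grow, but one still needs the period-matching extension of the derivative over the connecting arc to be compatible with the prescribed data on the components of $U$; this is resolved exactly as in the cited proof by first extending over the arc and only then invoking the semiglobal Mergelyan step with the enlarged admissible set.
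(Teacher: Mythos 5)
Your proposal is correct and follows essentially the same exhaustion scheme as the paper's proof: a normal Runge exhaustion $\{M_j\}$ of $M$ with one new component of $U$ per step, period-matching extension of the derivative over a connecting arc to form an admissible set $L_j=M_{j-1}\cup\Gamma_j\cup W_j$, application of Proposition~\ref{prop:semiglob} to extend to $M_j$ with the approximation and interpolation conditions maintained, and Cauchy-estimate thresholds $\varepsilon_j<\varepsilon_{j-1}/2$ controlling the limit. Two minor deviations are worth noting. First, for the base case the paper starts with $M_0$ a disc \emph{disjoint} from $U$ and an arbitrary full holomorphic $A$-embedding on it; your start $F^0=F|_{U_0\setminus E_0}$ (where $U_0$ may be disconnected) leaves an unexplained transition from $U_0$ to $M_0$, and choosing $M_0\cap U=\varnothing$ is cleaner. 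Second, for the embedding statement the paper invokes Theorem~\ref{th:compact-embedding} directly at each inductive stage to make $F^j$ an embedding on all of $M_j\setminus E_j$, while you interleave Lemma~\ref{lemma:emb} with shrinking $\Delta_j$ into the exhaustion — this inlines the proof of Theorem~\ref{th:compact-embedding} into the Mittag-Leffler recursion rather than using the theorem as a black box; both are valid, but the nested bookkeeping (ensuring $\bigcup_j(M_{j-1}\setminus\Delta_j)=M\setminus E$ and that each $\varepsilon_j$ protects injectivity off $\Delta_j$, not merely the immersion property) must be done explicitly and is cleaner to outsource to the already-proven Theorem~\ref{th:compact-embedding}. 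Neither point is a substantive gap.
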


\begin{proof}
Assume that $U$ has infinitely many connected components; otherwise the proof is simpler. By the assumptions, $U$ consists of countably many components which are Runge admissible compact sets, and there exists a normal exhaustion of $M$ by a sequence of connected smoothly bounded Runge compact domains
\begin{equation} \label{eq:exhaustion}
M_0 \Subset M_1 \Subset M_2 \Subset \dots \Subset \bigcup_{j=0}^{\infty}M_j = M,
\end{equation}
such that $M_0$ is a disc, $U\cap M_0 = \varnothing$, and for every $j \geq 1$, $U\cap bM_j = \varnothing$ and $M_j \setminus M_{j-1}$ contains exactly one component of $U$; call it $W_j$. In particular, $U\cap M_j=\bigcup_{i=1}^jW_i$ and $U=\bigcup_{i\ge 1}W_i$.

Fix a nowhere vanishing holomorphic $1$-form $\theta$ on $M$.
Choose an arbitrary full holomorphic $A$-embedding $F^0 \colon M_0 \to \C^n$ and write $dF^0 = f^{0}\theta$. (Note that $M_0 \cap U = \varnothing$.)
Set $\varepsilon_0 := \varepsilon/2$.
We shall inductively construct a decreasing sequence of positive numbers $\{\varepsilon_j\}_j$ and a sequence of full meromorphic $A$-immersions $\{F^j \colon M_j\setminus (E\cap M_j) \to \C^n\}_j$ of class $\mathscr{I}_A(M_j|E \cap M_j)$ such that, writing $dF^j = f^{j}\theta$, the following properties hold for all $j\geq 1$.
\begin{itemize}
\item[\rm (1$_j$)] $F^{j}-F^{j-1}$ is continuous on $M_{j-1}$ and $||F^{j}-F^{j-1}||_{M_{j-1}}<\varepsilon_{j-1}$.

\item[\rm (2$_j$)] $F^j-F$ is continuous on $U\cap M_j$ and $||F^{j}-F||_{U\cap M_{j}}<\varepsilon_{j-1}$.

\item[\rm (3$_j$)] $F^j-F$ vanishes on $(\Lambda \setminus \mathring U)\cap M_j$ and vanishes to order $r(p)$ at $p \in (E \cup (\Lambda \cap \mathring U))\cap M_j$.

\item[\rm (4$_j$)] If $F|_\Lambda \colon \Lambda \to \C^n$ is injective, then $F^j$ is injective on $M_j \setminus (E\cap M_{j})$.

\item[\rm (5$_j$)] $0<\varepsilon_j<\varepsilon_{j-1}/2$, and if $G\colon M\setminus E \to \C^n$ is a holomorphic map with $||G-F^j||_{M_j\setminus(E\cap M_{j})}<2\varepsilon_j$, then $G$ is an immersion on $M_{j-1} \setminus (E \cap M_{j-1})$.
\end{itemize}

The base of induction is provided by the already chosen $\varepsilon_0$ and $F^0$. (Note that $M_0\cap U = \varnothing$ and $F^0 \colon M_0 \to \C^n$ is a full holomorphic $A$-embedding.) Properties {\rm (1$_0$)} and {\rm (5$_0$)} are void, whereas \rm (2$_0$)--\rm (4$_0$) hold for free as $U\cap M_0 = \varnothing$.
For the inductive step, assume that for some fixed $j\in \N$ we have already found numbers $\varepsilon_i>0$ and full meromorphic $A$-immersions $F^i$ satisfying {\rm (2$_i$)}--{\rm (4$_i$)} for all $i \in \{0,1,\dots,j-1\}$. Let us construct $\varepsilon_j$ and $F^j$.
Firstly, we shall connect $M_{j-1}$ and $W_j\subset\mathring M_j \setminus M_{j-1}$ to obtain a compact connected admissible set $L_j$ which is Runge in $M$. 
For that, pick points $p_j \in bM_{j-1}$ and $q_j \in bW_j$ such that $q_j$ does not belong to any of the finitely many Jordan arcs in the description of $W_j$ as admissible set (see Definition~\ref{def: admissible set}). Take a smooth oriented arc $\Gamma_j \subset \mathring M_j$ with the endpoints $p_j$ and $q_j$ such that it intersects $M_{j-1}\cup W_j$ only at its endpoints $p_j, q_j$ and the intersection is transverse.
Apply \cite[Lemma~3.5.4]{alarcon2021minimal} and extend $f^{j-1} \colon M_{j-1}\setminus (E \cap M_{j-1}) \to A_{*}$ smoothly to $\Gamma_j$ such that it still maps to $A_{*}$ and satisfies
\begin{equation*} 
\int_{\Gamma_j} f^{j-1}\theta = F(q_j)-F^{j-1}(p_j).
\end{equation*}
Thus, we obtain an admissible set $L_j := M_{j-1} \cup \Gamma_j \cup W_j \subset \mathring M_j$ (see Figure~\ref{fig:induction ML}) and a map $\widetilde{F}^j$ on $L_j$, defined by
\begin{equation*}
\wt F^j(p) := \begin{cases}
			F^{j-1}(p) \  &\text{if }p\in M_{j-1}, \\
			F^{j-1}(p_j) + \int_{p_j}^{p} f^{j-1}\theta \ &\text{if }p\in \Gamma_j, \\
			F(p) \ &\text{if } p\in W_j.
		\end{cases}
\end{equation*}
By construction, $\widetilde{F}^j \in \mathscr{I}_A(L_j|E \cap L_j)$.
In this setting, Proposition~\ref{prop:semiglob} furnishes us with a full meromorphic $A$-immersion $F^j \colon M_j\setminus (E\cap M_j) \to \C^n$ of class $\mathscr{I}_A(M_j|E \cap M_j)$ such that $F^j-\widetilde{F}^j$ extends continuously to $L_j$, vanishes on $(\Lambda \setminus \mathring U)\cap L_j$ and vanishes to order $r(p)$ at $p \in (E \cup (\Lambda \cap \mathring U))\cap L_j$, and $||F^j-\widetilde{F}^j||_{L_j}<\varepsilon_{j-1}$. Furthermore, by Theorem~\ref{th:compact-embedding}, we may assume that $F^j$ is an $A$-embedding on $M_j \setminus (E\cap M_{j})$ provided that $F|_{\Lambda}$ is injective.
%
\begin{figure}[h!]
\begin{center}
\includegraphics[scale=0.48, trim={0 5cm 0 5cm}, clip]{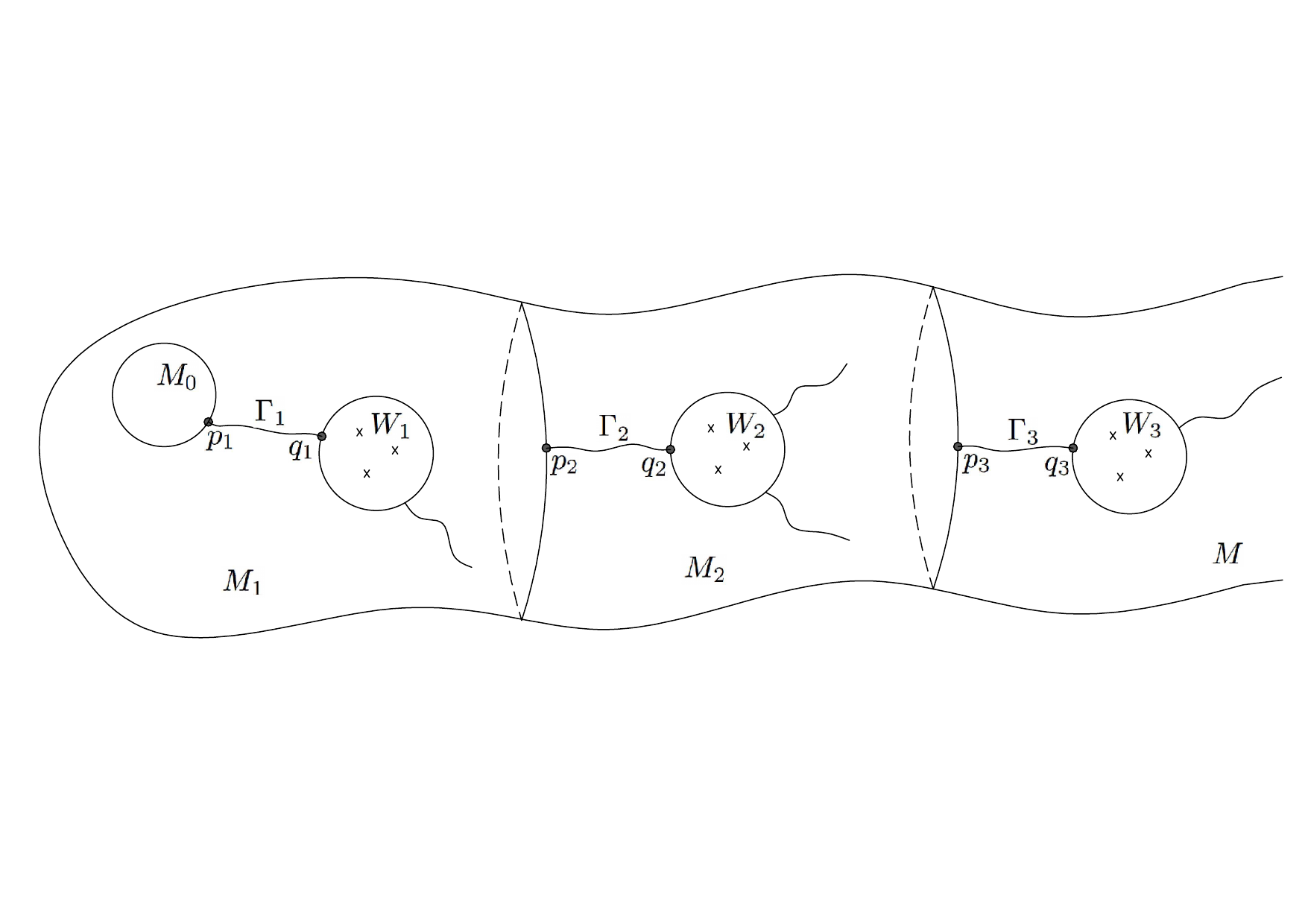}
\caption{Sets in the inductive process of the proof.}
\label{fig:induction ML}
\end{center}
\end{figure}

Let us check the properties.
Firstly, 
\begin{equation*}
||F^{j}-F^{j-1}||_{M_{j-1}} \leq ||F^j - \widetilde{F}^j||_{M_{j-1}} + ||\widetilde{F}^j-F^{j-1}||_{M_{j-1}} < \varepsilon_{j-1},
\end{equation*}
as $M_{j-1} \subset L_j$, which proves~\rm (1$_j$).
Similarly,
\begin{equation*}
||F^{j}-F||_{U \cap M_{j}} \leq ||F^j - \widetilde{F}^j||_{U \cap M_{j}} + ||\widetilde{F}^j-F||_{U \cap M_{j}} < \varepsilon_{j-1},
\end{equation*}
proving the second part of~\rm (2$_j$). (Note that in both inequalities above, the second summand equals $0$.) 
Furthermore,
$F^j-F = (F^j-\widetilde{F}^j)+(\widetilde{F}^j-F)$
extends continuously to $U\cap M_j$ since the first summand extends and the second one vanishes; by a similar argument, the difference vanishes on $(\Lambda \setminus \mathring U)\cap M_j$ and vanishes to order $r(p)$ at $p \in (E \cup (\Lambda \cap \mathring U))\cap M_j$, fulfilling~(2$_j$) and~(3$_j$).
Property~\rm (4$_j$) is already seen.
Finally, use Cauchy estimates to get a number $\varepsilon_j>0$ so small that~\rm (5$_j$) holds.
This closes the induction.

By~\eqref{eq:exhaustion},~(1$_j$) and~(3$_j$), there exists the limit map
\begin{equation*}
G := \lim_{j\to \infty} F^j \colon M\setminus E \longrightarrow \C^n
\end{equation*}
which is meromorphic on $M$ with effective poles precisely in $E$. 
We estimate
\begin{equation} \label{eq:proofML-G-Fj}
||G-F^j||_{M_j} \leq \sum_{k=j}^{\infty} ||F^{k+1}-F^{k}||_{M_j} \stackrel{\text{\rm (1$_{k+1}$)}}{<} \sum_{k=j}^{\infty} \varepsilon_{k} \stackrel{\text{\rm (5$_{j}$)}}{<} 2\varepsilon_{j},
\quad j\ge 0.
\end{equation}
By~\rm (5$_j$), $G \in \mathscr{I}_A(M_{j-1}|E \cap M_{j-1})$ for all $j\ge 1$, and hence $G \in \mathscr{I}_A(M|E)$. 
Moreover, the difference $G-F$ is continuous on $M$ by property~\rm (2$_j$), vanishes on $\Lambda \setminus \mathring U$ and vanishes to order $r(p)$ at $p \in E \cup (\Lambda \cap \mathring U)$ by~\rm (3$_j$).
To check the approximation, observe that
\begin{equation*}
||G-F||_{U\cap M_j} \leq ||G-F^j||_{U\cap M_j} + ||F^j-F||_{U\cap M_j} \stackrel{\text{\rm (2$_{j}$)}, \eqref{eq:proofML-G-Fj}}{<} 2\varepsilon_{j-1} \leq 2\varepsilon_0 = \varepsilon
\end{equation*}
holds for each $j \geq 1$, and, noting that $U\cap M_0 = \varnothing$, we conclude that $||G-F||_U < \varepsilon$, as desired. The fullness of $G$ is achieved for $\varepsilon_0>0$ small enough. Finally, in view of~{\rm (4$_j$)} and arguing as in the proof of Theorem~\ref{th:compact-embedding}, if $F|_\Lambda$ is injective, then we can grant that $G$ is an embedding by choosing each number $\varepsilon_j>0$ sufficiently small.
\end{proof}


\section{The Mittag-Leffler theorem for meromorphic $A$-immersions\\with fixed components} \label{sec:fixed components}

In this section we establish a version of Theorem~\ref{th:Mittag-Leffler} with the addition that, in the approximation, we keep fixed up to $n-2 \geq 1$ component functions of the given meromorphic directed immersion $M\to\C^n$, assuming they extend meromorphically to the whole open Riemann surface $M$. This generalizes \cite[Theorem~7.7]{alarcon2014null} to the meromorphic framework.

\begin{theorem} \label{th:Mittag-Leffler-fixed}
Let $M$, $E$, $U$, $A$, $r$ and $\varepsilon$ be as in Theorem~\ref{th:Mittag-Leffler}, $\Lambda \subset \mathring U\setminus E$ be a closed discrete subset of $M$, and $k\geq 1$ and $m\geq 2$ be integers. 
Set $n = k+m$, and assume that $A' = A \cap \{z_1=1,\dots,z_k=1\}$ is an Oka manifold and the projection $\pi = (\pi_1,\dots,\pi_k) \colon A \to \C^k$ onto the first $k$ coordinates
admits a local holomorphic section $\rho$ near $(z_1,\dots,z_k)=(0,\dots,0)$ with $\rho(0,\dots,0) \neq 0$.
Let $F=(F',F'') \colon U\setminus E \to \C^n$, where $F'=(F_1,\dots,F_k), \ F''=(F_{k+1},\dots,F_n)$, be as in Theorem~\ref{th:Mittag-Leffler}, and assume that $F'$ extends to a nonconstant meromorphic map $F' \colon M \to \C^k$ that is holomorphic on $M\setminus E$ and such that the zero set $(f_1,\dots,f_k)^{-1}(0,\dots,0)$ intersects $U$ only at points in $\mathring U$, where $f_i=dF_i/\theta$, $i\in \{1,\dots,n\}$, with $\theta$ a given holomorphic 1-form with no zeros on $M$.
Then there exists a meromorphic $A$-immersion $\widetilde{F}=(F',\widetilde{F}'') \in \mathscr{I}_A(M|E)$ satisfying the following.
\begin{itemize}
\item[\rm (i)] $\widetilde{F}''-F''$ extends continuously to $U$ and vanishes to order $r(p)$ at $p \in E \cup \Lambda$.

\item[\rm (ii)] $||\widetilde{F}''-F''||_U<\varepsilon$.
\end{itemize}
\end{theorem}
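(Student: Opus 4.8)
The strategy is to mimic the proof of Theorem~\ref{th:Mittag-Leffler}, but now carrying along a prescribed meromorphic map $F'=(F_1,\dots,F_k)\colon M\to\C^k$ as a fixed datum and only deforming the remaining $m=n-k$ components. The key geometric observation is that, away from the branch locus $Z:=(f_1,\dots,f_k)^{-1}(0,\dots,0)$, the fibre $A\cap\{z_1=\lambda_1,\dots,z_k=\lambda_k\}$ for $(\lambda_1,\dots,\lambda_k)\neq 0$ is biholomorphic to the Oka manifold $A'=A\cap\{z_1=1,\dots,z_k=1\}$ (using that $A$ is conical: scale by an appropriate $\lambda_j$), so over $M\setminus(E\cup Z)$ the condition that $\widetilde F=(F',\widetilde F'')$ be an $A$-immersion becomes a section problem into an Oka fibre bundle whose total space is $A_*$-directed by the $F'$-component. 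The local section $\rho$ near $(0,\dots,0)$ is what allows us to cross the branch points of $F'$: near a point $p\in Z$ we write the derivative of $\widetilde F$ as $g(p)\cdot\rho(f_1(p),\dots,f_k(p))$ for a suitable nonvanishing scalar meromorphic function $g$, exactly as in the proof of \cite[Theorem~7.7]{alarcon2014null}, so that fullness and the $A_*$-valued condition persist across $Z$.

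First I would record the semiglobal analogue: a version of Proposition~\ref{prop:semiglob} (equivalently of Theorem~\ref{th:Mergelyan} and Lemma~\ref{lemma:approx\&interp into A, full}) for maps with $k$ fixed components. Concretely, given a Runge admissible set $S$, one deforms only $f''=(f_{k+1},\dots,f_n)$ inside the fibres; the spray construction in Claim~\ref{claim:period spray} goes through verbatim once the vector fields $V_1,\dots,V_N$ are taken tangential to $A$ and to the fibres $\{z_1=\mathrm{const},\dots,z_k=\mathrm{const}\}$ simultaneously (such fields exist away from $Z$ by the submersivity of $\pi$ there), and near the finitely many points of $S\cap Z$ one uses the $\rho$-trick to get an honest holomorphic deformation. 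The period-killing step is unchanged: only the last $m\ge 2$ coordinates of the periods need to be corrected, and $m\ge 2$ suffices for the dominating-spray argument exactly as in Lemma~\ref{lemma:approx\&interp into A, full}. The Oka approximation used in Remark~\ref{remark:Mergelyan-extension} is applied with target $A'$ (which is Oka by hypothesis) rather than $A_*$.

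Next I would run the exhaustion-and-induction scheme of Theorem~\ref{th:Mittag-Leffler} word for word, replacing each invocation of Proposition~\ref{prop:semiglob}/Remark~\ref{remark:Mergelyan-extension} by its fixed-component counterpart from the previous step, and each invocation of Theorem~\ref{th:Mergelyan} by its fixed-component version. The one point needing care is the bridging step: when attaching the arc $\Gamma_j$ joining $M_{j-1}$ to $W_j$ one must extend $f^{j-1}''$ along $\Gamma_j$ keeping $f'$ equal to the prescribed $(f_1,\dots,f_k)$ and still landing in $A_*$ with the correct period $\int_{\Gamma_j}f^{j-1}\theta=F(q_j)-F^{j-1}(p_j)$; since $m\ge 2$ the component to be corrected has enough freedom, and one applies \cite[Lemma~3.5.4]{alarcon2021minimal} fibrewise (after passing to the Oka fibre $A'$ via the conical scaling, on the arc, which we may assume avoids $Z$). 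The hypothesis that $Z\cap U\subset\mathring U$ guarantees the branch points never fall on the boundary curves $bM_j$ or on the arcs $bW_j$ where the bridging takes place, so all the crossing-$Z$ manoeuvres occur in the interior. Fullness of the limit is obtained, as before, by making the errors $\varepsilon_j$ small, using that full meromorphic $A$-immersions with fixed $F'$ form a residual set among those produced by the semiglobal step.

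The main obstacle is precisely the behaviour at the branch locus $Z$ of $F'$: there the fibre $A\cap\{z'=f'(p)\}$ degenerates to the origin, the fibre bundle picture breaks down, and one cannot simply deform inside Oka fibres. This is exactly where the extra hypotheses on $A$ — that $A'$ be Oka and that $\pi$ admit the local section $\rho$ with $\rho(0)\ne 0$ — are indispensable: they let us patch the deformation across $Z$ by the substitution $f\rightsquigarrow g\cdot\rho\circ f'$ with $g$ a nonvanishing scalar factor, at the cost of working with $\mathscr O_\infty$- rather than $\mathscr O$-classes near those points (mirroring the role of the function $g$ in \eqref{eq:(g)}--\eqref{eq:tildef0}). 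Handling the interaction of these local modifications with the global period corrections and with the interpolation conditions at $E\cup\Lambda$ is the technical heart of the argument; everything else is a routine transcription of Sections~\ref{sec:Mergelyan theorem} and~\ref{sec:Mittag-Leffler} with $n$ replaced by the fibre dimension $m$.
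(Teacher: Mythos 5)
Your plan matches the paper's proof in its essential structure: reduce to a semiglobal approximation/interpolation result with $k$ fixed components (Proposition~\ref{prop:semiglob-fixed}), then run the same exhaustion-and-induction scheme as in Theorem~\ref{th:Mittag-Leffler}. The paper in fact literally states that Theorem~\ref{th:Mittag-Leffler-fixed} is obtained by repeating the proof of Theorem~\ref{th:Mittag-Leffler} with Proposition~\ref{prop:semiglob-fixed} in place of Proposition~\ref{prop:semiglob}, so your outline is on target at that level.

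Two points deserve correction. First, the technical engine is not the plain Oka approximation theorem ``with target $A'$''. The relevant object is the pullback $\Sigma = (h')^{*}A$ of $\pi \colon A \to \C^k$ along the holomorphic map $h' = g f'$; this is a ramified holomorphic fibre bundle over $M$ which is trivial with Oka fibre $A'$ away from $Q \cup E$ and degenerates over $Q \cup E$. What the paper invokes is the Oka principle for sections of ramified holomorphic maps (\cite[Theorem~1.13.4]{alarcon2021minimal}, cf.\ \cite[Theorem~6.14.6]{forstneric2017stein}), which requires a continuous global section holomorphic near the ramification values and near the interpolation set. This is where the section $\rho$ enters: near each $p \in Q \cap (M \setminus S)$ one chooses the local holomorphic section $h'' :=$ (last $m$ components of $\rho \circ h'$), so that $h''(p)$ equals the nonzero last $m$ components of $\rho(0)$ and $(h',h'')$ takes values in $A_*$. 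Second, the substitution $f \rightsquigarrow g \cdot \rho \circ f'$ with a nonvanishing scalar $g$, as you write it, is not consistent with the constraint: since $A$ is conical, $g \cdot \rho(f')$ does land in $A_*$, but its first $k$ coordinates are $g\, f'$ rather than the prescribed $f'$, so the fixed-component condition $\pi(\tilde f)=f'$ would be violated. In the paper, the scalar $g$ of \eqref{eq:g-fixedC} plays a different, orthogonal role: it has zeros at $E$ of the right orders to absorb the poles of $f = dF/\theta$, making $h = fg$ holomorphic and $A_*$-valued (again by conicality), and the branch locus of $(h')^{*}\pi$ then becomes $Q \cup E$. The local section $\rho$ is used only to produce the prescribed local holomorphic sections near $Q \cap (M\setminus S)$, not as a scalar substitution. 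Aside from these two misdescriptions, the rest of your outline — vector fields tangent to the fibres $A'_{c}$ for $c \neq 0$, period domination in the $m \geq 2$ free coordinates, bridging with Gromov's lemma in its fibred form (\cite[Remark~3.5.5]{alarcon2021minimal}), and the observation that $Q \cap U \subset \mathring U$ keeps the branch points away from boundaries — is the same as the paper's.
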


As in the general case, we obtain this theorem by a recursive application of the following approximation result of semiglobal type.
We omit the proof of Theorem~\ref{th:Mittag-Leffler-fixed} as it is analogous to the proof of Theorem~\ref{th:Mittag-Leffler}; the only difference being that we apply Proposition~\ref{prop:semiglob-fixed} in place of Proposition~\ref{prop:semiglob} in order to keep the first $k$-component functions fixed at all steps of the induction. 

\begin{proposition} \label{prop:semiglob-fixed}
Assume that $M$, $S$, $E$, $A$ and $r$ are as in Proposition~\ref{prop:semiglob}, $\Lambda \subset \mathring S \setminus E$ is a finite set, and $A'$, $\pi$, $k$, $m$ and $n$ are as is Theorem~\ref{th:Mittag-Leffler-fixed}.
Let $F = (F',F'') \colon S\setminus E \to \C^{n}$ be a map of class $\mathscr{I}_A(S|E)$ and assume that $F'$ extends to a nonconstant meromorphic map $F' \colon M \to \C^k$ that is holomorphic on $M\setminus E$ and such that the zero set $(f_1,\dots,f_k)^{-1}(0,\dots,0)$ intersects $S$ only at points in $\mathring S = \mathring K$, where $F'$, $F''$ and $f_i$ are defined as in Theorem~\ref{th:Mittag-Leffler-fixed}.
Given $\varepsilon >0$, there exists a meromorphic $A$-immersion $\widetilde{F}=(F',\widetilde{F}'') \in \mathscr{I}_A(M|E)$ satisfying the following.
\begin{itemize}
\item[\rm (i)] $\widetilde{F}''-F''$ extends continuously to $S$ and vanishes to order $r$ on $E \cup \Lambda$.

\item[\rm (ii)] $||\widetilde{F}''-F''||_S<\varepsilon$.
\end{itemize}
\end{proposition}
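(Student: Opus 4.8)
The plan is to run the proof of Proposition~\ref{prop:semiglob} essentially verbatim, the only genuinely new ingredient being a ``fixed components'' version of the Oka--Mergelyan extension used there (Theorem~\ref{th:Mergelyan} together with Remark~\ref{remark:Mergelyan-extension}), in which only the last $m=n-k$ component functions of the directed meromorphic immersion are modified while the first $k$ are kept fixed. Everything is done on derivatives: fix a nowhere vanishing holomorphic $1$-form $\theta$ on $M$ and write $f=dF/\theta=(f',f'')$ with $f'=(f_1,\dots,f_k)$ and $f''=(f_{k+1},\dots,f_n)$, so that $f'\colon M\to\C^k$ is the globally defined meromorphic map determined by $F'$, $f''$ is meromorphic on $\mathring S$ with poles in $E$ and continuous on $S\setminus E$, and $(f',f'')(S\setminus E)\subset A_*$. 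We seek a meromorphic map $\widetilde f''$ on $M$, with poles in $E$, such that $(f',\widetilde f'')\colon M\to A_*$, the $1$-form $(\widetilde f''-f'')\theta$ is exact on $S$, $\widetilde f''$ approximates $f''$ uniformly on $S$, and $\widetilde f''$ agrees with $f''$ to order $r$ on $E\cup\Lambda$; integrating $(f',\widetilde f'')\theta$ from a base point then yields $\widetilde F=(F',\widetilde F'')\in\mathscr{I}_A(M|E)$ with properties~(i)--(ii).

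For the fixed-components Mergelyan-with-Oka step I would adapt the spray argument of Lemma~\ref{lemma:approx&interp into A, full} with the following changes. The holomorphic vector fields generating the spray are taken \emph{vertical} for the projection $\pi=(\pi_1,\dots,\pi_k)\colon A\to\C^k$ --- tangent to $A$, vanishing at $0$, and with vanishing first $k$ components --- so that their flows leave $f'$ untouched. One first arranges ``vertical fullness'' of $f$ (the $\C$-linear span of the vertical parts of $T_{f(x)}A$, $x\in S$, being $\{0\}^k\times\C^m$) by a recursive deformation of $f''$ supported off $E\cup\Lambda$, exactly as in Step~1 of Lemma~\ref{lemma:approx&interp into A, full} and using that $A$ lies in no hyperplane; this is what lets the vertical period-dominating spray control all $ml$ periods of the $f''$-part along a skeleton of $S$. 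The Step~2 Oka approximation of maps into $A_*$ is replaced by a Runge approximation statement for the constrained problem, in which the Oka-ness of $A'=A\cap\{z_1=\dots=z_k=1\}$ together with the cone structure (the $\C^*$-action rescales the fibres of $\pi$ over $\{f'\neq0\}$ to ones governed by $A'$) plays the role that $A_*$ Oka played before, and the local section $\rho$ of $\pi$ near $0\in\C^k$ with $\rho(0)\neq0$ handles the (possibly degenerate) fibre $\pi^{-1}(0)$, through which $f'$ passes precisely along the discrete set $Z=(f_1,\dots,f_k)^{-1}(0)$; this is where the extra hypotheses on $A$ genuinely enter, just as in \cite[Theorem~7.7]{alarcon2014null}. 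The hypothesis $Z\cap S\subset\mathring S$ guarantees that $bS$ never meets this special fibre, so the two descriptions overlap on a neighbourhood of $S$. Poles at $E$ are cleared as in Lemma~\ref{lemma:approx&interp into A, full} through a scalar function $g$ vanishing to the appropriate orders at $E$ (using once more that $A$ is a cone), and interpolation on $E\cup\Lambda$ is enforced by taking the spray coefficients to vanish to high order there. A suitable parameter value then produces $\widetilde f''$ on a neighbourhood of $S$, and when $H_1(S,\Z)\to H_1(M,\Z)$ is an isomorphism, combining Oka approximation with the Mergelyan theorem on admissible sets as in Remark~\ref{remark:Mergelyan-extension} produces it on all of $M$.

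With this extension in hand, the proposition follows from the exhaustion argument of Proposition~\ref{prop:semiglob} applied to the $f''$-part: one takes a Morse exhaustion $M_0\Subset M_1\Subset\cdots$ of $M$ by Runge compact domains adapted to $S$ (arranged so that $Z$ avoids every $bM_j$), reduces to $S=K$ connected as there, and inductively extends across $M_{j-1}\subset M_j$ --- in the noncritical case by the fixed-components extension above; in the index-$1$ case by first extending $f''$ smoothly along the attached arc $\Gamma_j$ (chosen to miss $Z$) so that $\int_{\Gamma_j}f''\theta$ equals the corresponding increment of the $F''$-part already built, via an analogue of \cite[Lemma~3.5.4]{alarcon2021minimal} carried out inside the positive-dimensional fibre $\pi^{-1}(f'(p))$, and then applying the extension; and in the index-$0$ case by prescribing, on a small disc around the new critical point, any full $A$-immersion with the given first $k$ components $F'$, which exists since $\pi$ maps $A$ onto $\C^k$ (its image is a cone containing a neighbourhood of $0$ by the section $\rho$). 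A routine limit argument with a rapidly decreasing sequence of error bounds, together with Cauchy estimates to preserve the immersion property, then yields $\widetilde F=(F',\widetilde F'')$ satisfying~(i)--(ii).

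The step I expect to be the main obstacle is the fixed-components Mergelyan extension, because the spray must meet three competing demands at once: it must be \emph{vertical}, to leave $f'$ unchanged; it must nonetheless dominate all $m$ periods along every homology generator, which forces the vertical fullness condition and is delicate precisely because the fibre $\pi^{-1}(w)$ may have dimension strictly less than $m$, so the required span has to be extracted from the union of vertical tangent spaces along curves rather than from a single fibre; and it must be compatible both with the poles at $E$ and with the degenerate fibre $\pi^{-1}(0)$ that $f'$ meets along $Z$. It is exactly this three-way interplay that makes the hypotheses ``$A'$ Oka'', ``$\pi$ admits a section $\rho$ near $0$ with $\rho(0)\neq0$'', and ``$(f_1,\dots,f_k)^{-1}(0)\cap S\subset\mathring S$'' indispensable.
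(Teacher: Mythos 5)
Your proposal matches the paper's argument in all essential respects: you fix $f'$, build the period-dominating spray from vertical (fibre-tangent) vector fields so the first $k$ components are preserved, invoke the Oka property of $A'$ together with the local section $\rho$ of $\pi$ near $0$ to approximate across the degenerate fibre over $Z=(f_1,\dots,f_k)^{-1}(0)$, use the hypothesis $Z\cap S\subset\mathring S$ to keep $bS$ away from that fibre, and then run the noncritical/critical induction of Proposition~\ref{prop:semiglob} on the $f''$-part. The only expository difference is that the paper packages the ``constrained Oka--Runge'' step you describe as the basic Oka principle for holomorphic sections of the ramified pullback bundle $\Sigma=(h')^{*}A\to M$ (citing \cite[Theorem~1.13.4]{alarcon2021minimal}) and enlarges the interpolation set to $Q\cup E\cup\Lambda$, which is exactly the device implementing what you sketch informally.
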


\begin{proof}
We combine ideas from the proof of Theorem~\ref{th:Mergelyan} with those of the proofs of \cite[Theorem~7.7]{alarcon2014null} and \cite[Theorem~3.7.1]{alarcon2021minimal}.
Firstly, observe that by assumptions on $A' = A \cap \{z_1=1,\dots,z_k=1\}$, the projection $\pi = (\pi_1,\dots,\pi_k) \colon A \to \C^k$  is a trivial fibre bundle with Oka fibre $A'$ except over $(0,\dots,0) \in \C^k$.
Given a nowhere vanishing holomorphic $1$-form $\theta$ on $M$, we write $dF = f\theta$ and $f=(f',f'')$ for $f'=(f_1,\dots,f_k)$ and $f''=(f_{k+1},\dots,f_n)$.
By assumption, $f'$ is meromorphic and nonconstant on $M$, hence its zero set $Q := (f')^{-1}(0) = \{q_1,q_2,\dots \}$ is discrete in $M$.
Take a holomorphic function $g\in \mathscr{O}(M)$ with 
\begin{equation}\label{eq:g-fixedC}
	(g) = \prod_{p\in E}p^{o(p)},
\end{equation}
where $o(p) = \max \{ \textrm{Ord}_{p}(f_i)\colon i=1,\dots,n\}$ and $\textrm{Ord}_{p}(f_i)$ denotes the order of $f_i$ at the pole $p\in E$.
Let 
\[
	h := fg
\]
and write $h=(h',h'')$ as above (cf.~\eqref{eq:(g)} and~\eqref{eq:tildef0}). It follows that $h \in \mathscr{A}(S,A_{*})$ and $(h')^{-1}(0) \subset Q \cup E$ is discrete in $M$. For simplicity of exposition, we may assume that $(h')^{-1}(0) = Q\cup E$.
Denote the pullback bundle $\Sigma := (h')^{*}A$. The pullback $(h')^{*}\pi \colon \Sigma \to M$ of the projection $\pi \colon A \to \C^k$ to $M$ is a trivial holomorphic fibre bundle over $M \setminus (h')^{-1}(0) = M\setminus (Q\cup E)$ with Oka fibre $A'$ and has singular fibre over points in $Q\cup E$.
Since $(h',h'')(x) \in \pi^{-1}(h'(x))$ for $x\in S$, the map $x \mapsto (x,h''(x))$ is a section of $\Sigma \to M$ over $S$. In the sequel, we shall say that $h''$ is a section of $\Sigma$.

To prove the proposition, we shall approximate $h''$ on $S$ by a global holomorphic section $\tilde{h}''$ of $\Sigma$ over $M$ meeting interpolation conditions on $E\cup \Lambda$. And we shall apply control on the periods to make sure that, defining 
\[
	\tilde{f}=(h',\tilde{h}'')/g:M\setminus E\longrightarrow A_*,
\] 
the meromorphic 1-form $\tilde f\theta$ on $M$ integrates to a meromorphic $A$-immersion $\widetilde{F} \colon M\setminus E \to \C^n$ satisfying the proposition.
In order to achieve that, we wish to apply \cite[Theorem~1.13.4]{alarcon2021minimal} (cf.~\cite[Theorem~6.14.6]{forstneric2017stein}); however, we need a globally defined continuous section of $\Sigma$ over $M$ which is holomorphic on a small neighbourhood of any point in $M$ over which $(h')^{*}\pi$ is ramified, i.e., on $Q\cup E$, and on a small neighbourhood of any point of interpolation, i.e., on $\Lambda \cup E$.
Note that if $p\in (Q\cup E\cup \Lambda)\cap S$, then $p\in \mathring S = \mathring K$ and $h''$ is holomorphic around $p$ by assumption.
The other possibility is that $p\in Q \cap (M\setminus S)$, because $E\cup \Lambda\subset \mathring S$. So, on a disc neighbourhood of any such point $p$ we choose a holomorphic section $h''$ such that $h''(p) \neq 0$ and $(h',h'')$ maps to $A_{*}$. We add all these disc neighbourhoods to the domain of holomorphicity of $h''$.

We aim at approximating $h''$ on $S$ and interpolating on $\widetilde{\Lambda}:=Q\cup E\cup \Lambda$ at the same time.
Observe that $(M, S\cup \widetilde{\Lambda})$ is homotopy equivalent to a relative CW-complex of dimension $1$, therefore, a skeleton of $M$ is obtained from $S\cup \widetilde{\Lambda}$ by attaching points and edges. Moreover, regular fibres $A'_{c_1,\dots,c_k} := A \cap \{z_1=c_1,\dots,z_k=c_k\}$ for $(c_1,\dots,c_k) \neq (0,\dots,0)$ are connected, which implies that any holomorphic section of $\Sigma$ over $S\cup \widetilde{\Lambda}$ can be extended to a continuous section over $M$. Hence, the assumptions in \cite[Theorem~1.13.4]{alarcon2021minimal} (the basic Oka principle for sections of ramified maps) are satisfied. 
We shall adapt the proof of Lemma~\ref{lemma:approx&interp into A, full} to the current aim.

Let $\mathscr{C} = \{C_1,\dots,C_l\}$ be a skeleton of $S$ based at $\widetilde{\Lambda}\cap S$ (see Definition~\ref{def: skeleton}). Recall that the union $C=\bigcup_{i=1}^{l}C_i$ is Runge in $S$ and contains a homology basis of $S$ and arcs enabling to interpolate on $\widetilde{\Lambda}\cap S$. 
Recall that $m=n-k\ge 2$, and let
\begin{equation}\label{eq:P-h'fixed}
	\mathscr{P} = \left(\mathscr{P}_1,\dots,\mathscr{P}_l \right) \colon \Big\{ \hat{h}''\in \mathscr{C}(C,\C^m) \colon \frac{\hat{h}''}{g}-f''\in \mathscr{C}^0(C,\C^m) \Big\} \longrightarrow \left(\C^m \right)^l,
\end{equation}
defined by
\begin{equation*}
\mathscr{P}_i (\hat{h}'') := \int_{C_i} \Big(\frac{\hat{h}''}{g}-f'' \Big) \theta \in \C^m, \quad \ i=1,\dots,l,
\end{equation*}
be the period map, applied to $\hat{h}''$; cf.~\eqref{eq:period map}. Throughout the proof, we shall consider holomorphic vector fields $V$ on $A$, vanishing at $0\in A$, which are tangential to the regular fibre $A'_{c_1,\dots,c_k} = A \cap \{z_1=c_1,\dots,z_k=c_k\}$ of the projection $\pi:A\to\C^k$ at every point $z\in A'_{c_1,\dots,c_k}$, $(c_1,\dots,c_k) \neq (0,\dots,0)$. 
The latter simply means that the vector field is tangential to $A$ and orthogonal to $\C^k\times\{(0,\ldots,0)\}\subset\C^n$. Note that if $\C\ni t\mapsto \phi(t,z)\in A$, $z\in A$, denotes the flow of such a $V$ for small values of $|t|$, then $\phi(0,z)=z$ for all $z\in A$ and 
\begin{equation}\label{eq:piphih'}
	\pi \left(\phi(t,h(p)) \right) = h'(p)\quad \text{for every $t$ and all $p\in S$}.
\end{equation}

We prove the proposition in five steps. Let $r'>r+\max\{o(p):p\in E\}$; see~\eqref{eq:g-fixedC}. 

\textit{Step~1: Approximating $h''$ by a full map.}
Following the arguments in the proof of Claim~\ref{cl:Step1}, but using vector fields of the above kind, the period map $\mathscr{P}$ in~\eqref{eq:P-h'fixed}, and the holomorphic section $h''$, we obtain a holomorphic map $\tilde{h}'' \in \mathscr{A}(S,\C^m)$, which is full on any curve $C_i \in \mathscr{C}$ and approximates $h''$ uniformly on $S$, such that $\tilde{h}''-h''$ vanishes to order $r'$ on $\widetilde{\Lambda} \cap S$, $\tilde h''/g-f''$ is continuous on $S$, $\mathscr{P}(\tilde{h}'') = 0$ and, taking into account~\eqref{eq:piphih'}, $(h',\tilde{h}'')(S) \subset A_{*}$. In particular, $\tilde h''$ is a section of $\Sigma$. Let us replace $h''$ by $\tilde{h}''$ and assume in what follows that $h''$ is full.

\textit{Step~2: Finding a period dominating spray.}
Now choose a family of holomorphic vector fields $V_1,\dots,V_N$ $(N\in \N)$ as above such that $\span\{V_1(z),\dots,V_N(z)\} = T_{z}(A'_{c_1,\dots,c_k})$ for all $z\in A'_{c_1,\dots,c_k}$ and $(c_1,\dots,c_k) \neq (0,\dots,0)$.
Set 
\[
	\Phi_{h'}(t,p):=h'(p)\in\C^k\quad \text{for all $t\in (\C^m)^l$ and $p\in S$.}
\] 
By~\eqref{eq:piphih'} and following the arguments in the proof of Claim~\ref{claim:period spray}, we can obtain a holomorphic spray $\Phi_{h''}:U\times S\to \C^m$ of sections of $\Sigma$ of class $\mathscr{A}(S,\C^m)$, where $U \subset (\C^m)^l$ is an open neighbourhood of $0 \in \C^{ml}$, such that the holomorphic spray
\begin{equation} \label{eq:spray-h',h''}
\Phi_{h} := \left(\Phi_{h'},\Phi_{h''}\right) \colon U\times S\longrightarrow \C^n
\end{equation}
assumes its values in $A_{*}$, $\Phi_{h}(0,\cdot)=h$, $\Phi_{h''}(t,\cdot)/g-f''$ is continuous on $S$ and vanishes to order $r'$ on $\widetilde{\Lambda} \cap S$ for all $t\in U$, and $\frac{\partial}{\partial t}|_{t=0} \mathscr{P}(\Phi_{h''}(t,\cdot)) \colon (\C^m)^l \to (\C^m)^l$ is an isomorphism; see~\eqref{eq:P-h'fixed}.

\textit{Step~3: The noncritical case.}
Assume that $S$ is a connected admissible set in an open Riemann surface $M$ such that the inclusion $S \hookrightarrow M$ induces an isomorphism $H_1(S,\Z) \to H_1(M,\Z)$. 
Let $\widetilde{\Lambda} \subset S$ be a finite subset and $\mathscr{C}$ be as above. (Note that at the moment, $S$ and $\widetilde{\Lambda}$ are not the ones in the statement of the proposition.)
Assume that $(h')^{*}\pi$ is as above, with its ramification values in $\mathring S = \mathring K$, and $h=(h',h'')$ and $\Phi_h$ are as in the previous steps for this pair $(M,S)$.
By \cite[Theorem~1.13.1, Theorem~1.13.4]{alarcon2021minimal}, there exists a holomorphic section $\hat{h}''$ of $\Sigma$ on $M$ which approximates $h''$ uniformly on $S$, and is such that $\hat h''/g-f''$ is continuous on $S$ and $\hat{h}''-h''$ vanishes to order $r'$ on $\widetilde{\Lambda}$.
Replace $h=(h',h'')$ by $\hat{h}=(h',\hat{h}'') \colon M \to A_{*}$ as the core of the spray $\Phi_h$ in~\eqref{eq:spray-h',h''}, then interpolate and approximate uniformly on $S$ all functions used in the construction of $\Phi_h$ by global holomorphic functions on $M$ (see the proof of Claim~\ref{claim:period spray}).
The new spray $\Phi_{\hat{h}} \colon \wh U \times M \to A_{*}$ (where $0 \in \wh U \subset U \subset \C^{ml}$), given by $\Phi_{\hat{h}} = (\Phi_{h'}, \Phi_{\hat{h}''})$, has the same properties as $\Phi_h$ provided that all approximations are close enough.
Consequently, the implicit function theorem guarantees existence of a point $t_0 \in \wh U$ near the origin such that the map $\tilde{h}'' := \Phi_{\hat{h}''}(t_0,\cdot)$ is a section of $M \to \Sigma$ satisfying $\mathscr{P}(\tilde{h}'')=0, \ \tilde{h}''\approx h''$ on $S$, $\tilde h''/g-f''$ is continuous on $S$ and $\tilde{h}''-h''$ vanishes to order $r'$ on $\widetilde{\Lambda}$.

\textit{Step~4: The induction and the critical case.}
We proceed by induction as in the proof of Proposition~\ref{prop:semiglob}. We shall briefly explain the procedure at the $j$-th step ($j \geq 1$) where we have a suitable meromorphic section $(h'')^{j-1}$ of $\Sigma$ defined on the compact set $M_{j-1}$ and wish to approximate it with interpolation by a section $(h'')^j$ defined on $M_j$ (here, the sets $M_j$ form an exhaustion of $M$ as in the proof of Proposition~\ref{prop:semiglob}). We distinguish two cases. In the first one, the annulus $A_j$ (defined as in the proof of Proposition~\ref{prop:semiglob}) does not contain any critical points of the Morse exhaustion function $\tau$ on $M$; this case is solved by the noncritical case. On the contrary, when $p\in A_j$ is the unique critical point of $\tau$ on $A_j$, as described in the proof of the cited proposition, we add certain arcs (denoted by $\Gamma_j$) if necessary. We keep the first $k$ component functions fixed (i.e., $h'=(h')^{j}$), and apply the argument to $(h'')^{j-1}$, such that the extension of $(h'')^{j-1}$ to $\Gamma_j$ is a holomorphic section of $\Sigma$ and $\int_{\Gamma_j} \frac{(h'')^{j-1}}{g} \theta$ equals a prescribed vector in $\C^m$.
(Note that the use of an analogue of Gromov integration lemma is justified by \cite[Remark~3.5.5]{alarcon2021minimal}).

\textit{Step~5. Completion of the proof.}
Performing the induction process, we then obtain in the limit a holomorphic map $\tilde h''=\lim_{j\to\infty} (h'')^j:M\to\C^m$, such that $(h',\tilde h''):M\to\C^n$ takes its values in $A_*$, $\tilde h''/g$ is holomorphic on $M\setminus E$, the difference $\tilde h''-h''$ is uniformly close to $0$ on $S$ and it vanishes to order $r'$ at every point in $E\cup\Lambda$. Thus, $\tilde h''/g-f''$ is continuous on $S$ and vanishes to order $r$ everywhere on $E\cup\Lambda$. Furthermore, by the control of periods, we have that $(\tilde h''/g)\theta$ is exact on $M\setminus E$ and the holomorphic map $\widetilde F'':M\setminus E\to \C^m$, defined by 
\[
	\widetilde F''(p):=F''(p_0)+\int_{p_0}^p(\tilde h''/g)\theta,\quad p\in M\setminus E,
\]
for any given base point $p_0\in S\setminus E$,
satisfies that $\widetilde F''-F''$ is continuous and $\varepsilon$-close to $0$ on $S$, and it vanishes to order $r$ at every point in $E\cup\Lambda$. It is now clear that the well-defined meromorphic $A$-immersion $\widetilde F:=(F',\widetilde F''):M\setminus E\to\C^n$ of class $\mathscr{I}_A(M|E)$ satisfies the conclusion of the proposition.
\end{proof}


\section{Global properties and completion of the proof of Theorem~\ref{th:intro-ML-DirMer}} \label{sec:properness}

Recall that an immersion $f\colon M \to \C^n$ from an open Riemann surface $M$ is \emph{complete} if the path $f\circ \gamma$ has infinite Euclidean length in $\C^n$ for any divergent path $\gamma$ in $M$. (A path $\gamma \colon [0,1) \to M$ is \emph{divergent} if $\gamma(t) \notin K$ for any given compact set $K \subset M$ as $t\to 1$.)
A continuous map $f\colon X \to Y$ of topological spaces is \emph{almost proper} if for every compact set $K \subset Y$ the connected components of $f^{-1}(K)$ are all compact.
If for every such $K \subset Y$ the preimage $f^{-1}(K)$ is compact in $X$, then the map $f$ is \emph{proper}.
Every proper map is also almost proper, and every almost proper immersion $M\to\C^n$ is complete, whereas the contrary implications do not hold.

In this section, we shall complete the proof of Theorem~\ref{th:intro-ML-DirMer} by checking the global properties in assertions {\rm (v)} and {\rm (vi)}. This is granted by the following more precise statement, which is an extension of the Mittag-Leffler theorem for $A$-immersions in Theorem~\ref{th:Mittag-Leffler}.
%
\begin{theorem} \label{th:Mittag-Leffler-properness}
Let $M$, $E$, $U$, $\Lambda$, $A$, $r$, $n$, $\varepsilon$ and $F$ be as in Theorem~\ref{th:Mittag-Leffler}.
Then there exists a meromorphic $A$-immersion $G \in \mathscr{I}_A(M|E)$ satisfying the conclusion of Theorem~\ref{th:Mittag-Leffler} and also the following properties.
\begin{itemize}
\item[\rm (i)] If in addition $A \cap \{z_1=1\}$ is an Oka manifold and the coordinate projection $\pi_1 \colon A \to \C$ onto the $z_1$-axis admits a local holomorphic section $\rho_1$ near $z_1=0$ with $\rho_1(0) \neq 0$, then $G$ can be chosen an almost proper map, and hence a complete immersion.

\item[\rm (ii)] If in addition $A \cap \{z_i=1\}$ is an Oka manifold for every $i \in \{1,\dots,n\}$, each coordinate projection $\pi_i \colon A \to \C$ onto the $z_i$-axis admits a local holomorphic section $\rho_i$ near $z_i=0$ with $\rho_i(0) \neq 0$, and either $U$ has finitely many connected components or $U=\bigcup_{j\in\N}W_j$ and
\begin{equation} \label{eq:mu_j}
\lim_{j\to \infty} \min \left\{|F(p)| \colon p \in W_j \right\} = +\infty,
\end{equation} 
where $\{W_j\}_{j\in\N}$ is the family of connected components of $U$,
then $G$ can be chosen a proper map.
\end{itemize}
\end{theorem}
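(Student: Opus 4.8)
The plan is to superimpose a norm‑raising mechanism onto the recursive construction used to prove Theorem~\ref{th:Mittag-Leffler}. We keep the same set‑up: a normal exhaustion $M_0\Subset M_1\Subset\cdots$ with $M_0$ a disc disjoint from $U$, $U\cap bM_j=\varnothing$, and $M_j\setminus M_{j-1}$ containing exactly one connected component $W_j$ of $U$; and we build meromorphic $A$‑immersions $F^j\in\mathscr{I}_A(M_j|E\cap M_j)$ together with a decreasing sequence $\varepsilon_j\searrow 0$ satisfying the analogues of conditions (1$_j$)--(5$_j$) in the proof of Theorem~\ref{th:Mittag-Leffler} (approximation of $F^{j-1}$ on $M_{j-1}$, control of $F^j-F$ on $U\cap M_j$, jet‑interpolation to the prescribed orders on $(E\cup\Lambda)\cap M_j$, embeddedness on $M_j\setminus(E\cap M_j)$ when $F|_\Lambda$ is injective, and a Cauchy‑estimate clause), produced via Proposition~\ref{prop:semiglob} and Theorem~\ref{th:compact-embedding}. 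To this list we add, for a fixed divergent sequence $0<\mu_1<\mu_2<\cdots$ with $\mu_j\to\infty$, the requirement that $|F^j|>\mu_j$ on $bM_j$ in case (i), respectively on all of $(M_j\setminus\mathring M_{j-1})\setminus E$ in case (ii) (for the latter one chooses $\mu_j\le\min\{|F(p)|:p\in W_j\}-1$, so that the inequality on $W_j$ is inherited from $F^j\approx F$ on $U$ and~\eqref{eq:mu_j}, while near the poles in the collar it holds automatically once the relevant disc neighbourhoods are taken small enough). If these augmented conditions are met, the limit $G=\lim_jF^j\in\mathscr{I}_A(M|E)$ satisfies all conclusions of Theorem~\ref{th:Mittag-Leffler}, and moreover $|G|\ge\mu_j-2\varepsilon_0$ on $bM_j$ for every $j$: given $C>0$, fix $j_0$ with $\mu_j-2\varepsilon_0>C$ for $j\ge j_0$; then $G^{-1}(\overline B(0,C))$ misses $\bigcup_{j\ge j_0}bM_j$, and since these curves separate $M$, every connected component of $G^{-1}(\overline B(0,C))$ lies in $\mathring M_{j_0}$ or in a single collar $\mathring M_{i+1}\setminus M_i$; as the effective poles make $|G|$ blow up near $E$, each such component is a compact subset of $M\setminus E$, so $G$ is almost proper, hence complete, proving (i). In case (ii) one gets $|G|>C$ on $(M_i\setminus\mathring M_{i-1})\setminus E$ for all $i\ge j_0$, whence $G^{-1}(\overline B(0,C))\subset M_{j_0}$ is compact in $M\setminus E$ and $G$ is proper, proving (ii).

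It remains to incorporate the norm control at the $j$‑th step. For (i): after forming the connected Runge admissible set $L_j=M_{j-1}\cup\Gamma_j\cup W_j$ and the map $\widetilde F^j\in\mathscr{I}_A(L_j|E\cap L_j)$ as in the proof of Theorem~\ref{th:Mittag-Leffler}, we arrange that $|F^j|$ be large on $bM_j$ while keeping the first coordinate fixed. Using the hypothesis of (i) --- $A\cap\{z_1=1\}$ an Oka manifold and $\pi_1\colon A\to\C$ admitting a local holomorphic section $\rho_1$ near $z_1=0$ with $\rho_1(0)\ne0$ --- we apply Proposition~\ref{prop:semiglob-fixed} (equivalently Theorem~\ref{th:Mittag-Leffler-fixed}) with $k=1$ and $m=n-1\ge2$, fixing a nonconstant meromorphic function $\widehat F_1$ on $M$, holomorphic off $E$, that approximates the first coordinate of $F^{j-1}$ on $M_{j-1}$ and $F_1$ on $U\cap M_j$, has $d\widehat F_1/\theta$ vanishing only in $\mathring U$, and whose prescribed largeness on $bM_j$ is realised, together with the rest of the data, by a labyrinth‑type argument in a thin collar inside $M_j$ along $bM_j$, exactly as in \cite[Theorem~7.7]{alarcon2014null} (see also \cite[\textsection 8]{alarcon2014null} and \cite[Theorem~1.3]{alarcon2019interpolation}). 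This yields $F^j\in\mathscr{I}_A(M_j|E\cap M_j)$ with first coordinate $\widehat F_1$, approximating $\widetilde F^j$ on $L_j$ and keeping the interpolation, hence with $|F^j|\ge|\widehat F_1|>\mu_j$ on $bM_j$. A concluding application of Theorem~\ref{th:compact-embedding}, with a perturbation small enough to preserve the inequality on $bM_j$ and the interpolation, makes $F^j$ an embedding on $M_j\setminus(E\cap M_j)$ when $F|_\Lambda$ is injective; a Cauchy estimate then fixes $\varepsilon_j$.

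For (ii) one must instead have $|F^j|>\mu_j$ on the whole compact band of the collar lying off $W_j$ and off small disc neighbourhoods of the poles. One covers this band by finitely many sets $\alpha_1,\dots,\alpha_n$ (each a disjoint union of arcs running roughly parallel to $bM_j$ inside a labyrinth) whose union contains the band, and raises $|F_i|$ above $\mu_j$ on $\alpha_i$ for $i=1,\dots,n$ successively: at the $i$‑th sub‑step one applies Proposition~\ref{prop:semiglob-fixed} holding the coordinates $F_1,\dots,F_{i-1}$ (already made large on $\alpha_1,\dots,\alpha_{i-1}$) and one further coordinate fixed --- so $k\le n-2$ are kept fixed and $m\ge2$ are free, with $F_i$ among the free ones --- and uses \cite[Lemma~3.5.4]{alarcon2021minimal} and \cite[Remark~3.5.5]{alarcon2021minimal} on auxiliary connecting arcs to prescribe $F_i$ large on $\alpha_i$. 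This is precisely the coordinate‑cycling device of \cite[Theorem~8.1]{alarcon2014null} and \cite[Theorem~1.3]{alarcon2019interpolation}, and it is here that the full hypothesis of (ii) --- $A\cap\{z_i=1\}$ Oka and $\pi_i\colon A\to\C$ admitting a non‑vanishing local section for every $i$ --- is used. The embedding step and the choice of $\varepsilon_j$ proceed as before, the recursion closes as in the proof of Theorem~\ref{th:Mittag-Leffler}, and when $U$ has only finitely many components the construction is a truncated version of the same argument.

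The main obstacle is the bookkeeping. Each norm‑raising sub‑step passes through Proposition~\ref{prop:semiglob-fixed}, which demands that the coordinates held fixed be genuine global meromorphic functions on $M$ whose derivatives do not vanish on $bU$; hence one must interlace the main recursion with auxiliary Mittag--Leffler extensions of these coordinates, and in case (ii) order the coordinate cycle so that the norm gains already secured on $\alpha_1,\dots,\alpha_{i-1}$ survive the modification of $F_i$ --- all while maintaining the prescribed jet‑interpolation at $E\cup\Lambda$, the effective poles at $E$, and (in the injective case) the embeddedness. The poles themselves only help, since they already force $|G|$ to explode near $E$; what is genuinely new relative to \cite{alarcon2014null} is the compatibility of these global meromorphic extensions and of the interpolation constraints with the fixed‑components apparatus, which is exactly what Theorem~\ref{th:Mittag-Leffler-fixed} and Proposition~\ref{prop:semiglob-fixed} are designed to supply.
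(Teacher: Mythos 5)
The overall scaffolding you propose — augmenting the recursive construction of Theorem~\ref{th:Mittag-Leffler} with norm‑raising data, using Proposition~\ref{prop:semiglob-fixed} to keep coordinates fixed, cycling coordinates over a cover of the collar for properness, and finishing with Theorem~\ref{th:compact-embedding} and a Cauchy‑estimate choice of $\varepsilon_j$ — is the same as the paper's, and your treatment of part~(ii) is essentially the coordinate‑cycling device that the paper packages as Lemma~\ref{lemma:properness} (the meromorphic analogue of \cite[Lemma~8.2]{alarcon2014null}); the indexing of the cycle is a bit fuzzy (at the last sub‑step your $k\le n-2$ bound cannot quite hold while still having touched all $n$ coordinates, which is why the paper formulates the output as a bound on $\|\cdot\|_\infty$ rather than on individual coordinates) but the idea is the right one.

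Your argument for part~(i), however, has a genuine gap. You propose to produce a \emph{single} global meromorphic function $\widehat F_1$ on $M$, holomorphic off $E$, which approximates the given data on $M_{j-1}\cup W_j$ and satisfies $|\widehat F_1|>\mu_j$ on \emph{all} of $bM_j$, and you appeal to a ``labyrinth‑type argument''. This cannot work as stated: the set $(M_{j-1}\cup W_j)\cup N$, where $N$ is any neighbourhood or compact cover of the whole curve $bM_j$ inside $M_j\setminus(M_{j-1}\cup W_j)$, has a hole and is therefore not Runge, so no single Mergelyan/Runge/Oka step can simultaneously approximate on $M_{j-1}\cup W_j$ and prescribe $|\widehat F_1|$ large on $bM_j$. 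If you instead cover $bM_j$ by two Runge pieces $\Delta,\Omega$ (each of $(M_{j-1}\cup W_j)\cup\Delta$ and $(M_{j-1}\cup W_j)\cup\Omega$ Runge) and do two sub‑steps, the second approximation unavoidably destroys the control on $\Delta\setminus\Omega$ unless you \emph{freeze} the first coordinate at that point --- and then you need a \emph{different} coordinate to control $\Omega$. The labyrinth arguments you cite produce length‑of‑path lower bounds (good for completeness), not a uniform pointwise lower bound on a boundary Jordan curve. The paper's Lemma~\ref{lemma:almost-properness} is designed precisely to bridge this: step~one uses Proposition~\ref{prop:semiglob} freely to make $|\wt G_1|>\delta$ on $\Delta$; step~two uses Proposition~\ref{prop:semiglob-fixed} with $k=1$ (here the Oka and section hypotheses on $A\cap\{z_1=1\}$ and $\pi_1$ enter) to make $|H_2|>\delta$ on $\Omega$ \emph{while keeping} $\wt G_1$ fixed, so that $\max\{|\wt F_1|,|\wt F_2|\}>\delta$ on $bL$. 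That max‑of‑two‑coordinates bound is what the almost‑properness argument actually needs; your single‑coordinate bound is both stronger than necessary and unobtainable.

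Once Lemma~\ref{lemma:almost-properness} is in hand, the rest of your limiting argument for~(i) (the components of $G^{-1}(\overline B(0,C))$ being trapped between consecutive boundary curves on which $\|G\|_\infty>\delta_j$, and the poles blowing up near $E$) coincides with the paper's and is correct.
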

The proof relies on improving the inductive construction in the proof of Theorem~\ref{th:Mittag-Leffler} by implementing the following two results, which are the new key ingredients. The first lemma concerns almost proper maps, and the second lemma the proper ones.

\begin{lemma} \label{lemma:almost-properness}
Let $M$, $A$ and $n$ be as in Theorem~\ref{th:Mittag-Leffler-properness}-(i), assume that $K$ and $L$ are smoothly bounded Runge compact domains in $M$ such that $K \subset \mathring L$, and $E, \Lambda \subset \mathring K$ are finite disjoint sets.
Let $F \colon K\setminus E \to \C^n$ be a meromorphic $A$-immersion of class $\mathscr{I}_A(K|E)$. Then for any numbers $r \in \N$, $\varepsilon>0$ and $\delta >0$, there is a meromorphic $A$-immersion $\widetilde{F}=(\widetilde{F}_1,\widetilde{F}_2,\ldots,\widetilde{F}_n) \in \mathscr{I}_A(L|E)$ such that the following hold.
\begin{itemize}
\item[\rm (i)] $\widetilde{F}-F$ is continuous on $K$.

\item[\rm (ii)] $\widetilde{F}-F$ vanishes to order $r$ on $E\cup \Lambda$.

\item[\rm (iii)] $||\widetilde{F}-F||_{K} < \varepsilon$.

\item[\rm (iv)] $\max \{|\wt{F}_1(p)|, |\wt{F}_2(p)|\} > \delta$ for all $p \in bL$.
\end{itemize}
\end{lemma}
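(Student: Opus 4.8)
\emph{Strategy.} The plan is to deduce the lemma from the Mittag--Leffler theorem for meromorphic $A$-immersions with a fixed coordinate function, that is, from Theorem~\ref{th:Mittag-Leffler-fixed} and its semiglobal companion Proposition~\ref{prop:semiglob-fixed} used with $k=1$ and $m=n-1\ge 2$, after a preliminary choice of the first coordinate adapted to condition~{\rm (iv)}. Since the standing hypotheses single out only $z_1$, we may relabel the coordinates $z_2,\dots,z_n$; as the Oka manifold $A'=A\cap\{z_1=1\}$ is a closed complex submanifold of $\C^{n-1}$ of positive dimension $n-2$, hence noncompact, some coordinate is unbounded on $A'$, and after relabelling we assume $z_2|_{A'}$ is unbounded. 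By Theorem~\ref{th:Mergelyan} we may also assume that $F$ is a meromorphic $A$-immersion on a neighbourhood of $K$; fixing a nowhere vanishing holomorphic $1$-form $\theta$ on $M$ and writing $dF=f\theta$, a small perturbation of $F$ near $bK$ gives that $f_1=dF_1/\theta$ has no zeros there.

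\emph{Step 1: choosing the first coordinate.} First I would construct a meromorphic function $\Psi$ on $M$ with an effective pole at each point of $E$ whose principal part agrees with that of $F_1$, such that $\Psi-F_1$ (which is then holomorphic near $E$) is arbitrarily small on $K$ and vanishes to order $r$ on $E\cup\Lambda$; such that $d\Psi/\theta$ is nonconstant on $M$ with no zeros near $bK\cup bL$; and such that $|\Psi|>\delta$ on $bL$ outside a finite union of pairwise disjoint closed arcs $\gamma_1,\dots,\gamma_s\subset bL$ (chosen according to the relative position of $K$ and $L$). Such a $\Psi$ is provided by the classical Mittag--Leffler and Runge theorems on the open Riemann surface $M$: prescribe the principal parts at $E$ to obtain a meromorphic model $\Psi_0$, then correct by a holomorphic function on $M$ which is close to $F_1-\Psi_0$ on $K$ and of large modulus near $bL\setminus\bigcup_j\gamma_j$ — a Mergelyan approximation on a suitable Runge set containing $K$ and a thin inner neighbourhood of $bL\setminus\bigcup_j\gamma_j$ — with a factor vanishing to order $r$ on $E\cup\Lambda$ inserted. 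The arcs $\gamma_j$, which can be taken arbitrarily short, are precisely the portions of $bL$ along which a single function that is small on $K$ cannot be made large; this is the reason why {\rm (iv)} allows two coordinate functions rather than one.

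\emph{Step 2: the remaining coordinates.} Next I would carry out the proof of Theorem~\ref{th:Mittag-Leffler-fixed} — which iterates Proposition~\ref{prop:semiglob-fixed} over a normal exhaustion of $M$, as in the proof of Theorem~\ref{th:Mittag-Leffler} — with $k=1$ and the first coordinate held fixed equal to $\Psi$, producing $\widetilde F=(\Psi,\widetilde F_2,\dots,\widetilde F_n)\in\mathscr{I}_A(M|E)$ with $\widetilde F-F$ continuous on $K$, vanishing to order $r$ on $E\cup\Lambda$, and $||\widetilde F-F||_K<\varepsilon$. Here the local section $\rho_1$ of $\pi_1$ near $z_1=0$ is used exactly as in that proof, to define the section of the pullback bundle $(d\Psi/\theta)^{*}A$ near the zero set $Q=(d\Psi/\theta)^{-1}(0)$, which is adjoined to the interpolation set. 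The only new requirement is $|\widetilde F_2|>\delta$ on each $\gamma_j$. Since the $\gamma_j$ lie in $bL$, away from $K$, from $E\cup\Lambda$, and from $Q$ (recall $d\Psi/\theta$ has no zeros near $bL$), at the single stage of the iteration where the exhaustion first engulfs $L$ I would include a thin compact neighbourhood of $\bigcup_j\gamma_j$ inside $L$ into the region on which one approximates, extending the core section over it by one whose $z_2$-component has modulus $>2\delta$ (available because $z_2|_{A'}$ is unbounded and $d\Psi/\theta$ is non-vanishing there); this is compatible with the admissible-set/Runge framework. The Oka principle with approximation and interpolation \cite[Theorem~1.13.4]{alarcon2021minimal}, combined with the period-dominating spray exactly as in Proposition~\ref{prop:semiglob-fixed}, then yields $\widetilde F_2$ close to this core near $\bigcup_j\gamma_j$, and this property survives in the limit since all later stages approximate on ever larger compacta containing $L$. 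Thus $|\widetilde F_2|>\delta$ on $\bigcup_j\gamma_j$, while $|\widetilde F_1|=|\Psi|>\delta$ on $bL\setminus\bigcup_j\gamma_j$ because $\Psi$ is unchanged throughout. Restricting $\widetilde F$ to $L$ gives {\rm (i)}--{\rm (iv)}.

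\emph{The main obstacle.} The delicate point is to impose the boundary growth of Step~1 and of~{\rm (iv)} \emph{simultaneously} with the approximation on $K$, the interpolation on $E\cup\Lambda$, and — above all — the exactness of the $A_*$-valued $1$-form that must be integrated to produce $\widetilde F$ on all of $M$. Approximation and interpolation survive because every boundary adjustment is localized away from $K$ and from $E\cup\Lambda$; exactness is maintained, as throughout this paper, by a period-dominating spray (the fixed-coordinate analogue of Claim~\ref{claim:period spray}), whose parameter is selected by the implicit function theorem once all approximations are sufficiently tight. A secondary technical point is the reduction to the case $z_2|_{A'}$ unbounded, and the verification — already implicit in the proof of Proposition~\ref{prop:semiglob-fixed} — that $\rho_1$ indeed supplies the section of the ramified pullback bundle over the zeros of $d\Psi/\theta$.
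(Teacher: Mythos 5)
Your strategy correctly identifies the need for two coordinate functions on $bL$, the roles of Propositions~\ref{prop:semiglob} and~\ref{prop:semiglob-fixed}, and the relevance of the Oka hypothesis on $A\cap\{z_1=1\}$ and the section $\rho_1$. However, Step~2 contains a genuine gap: you construct a free meromorphic function $\Psi$ on $M$ and then invoke the fixed-coordinate Mittag--Leffler machinery with $k=1$ and first component equal to $\Psi$. But Theorem~\ref{th:Mittag-Leffler-fixed} (and Proposition~\ref{prop:semiglob-fixed}, on which it rests) takes as \emph{input} an $A$-immersion $(\Psi,F'')\in\mathscr{I}_A(S|E)$ whose first component is already $\Psi$. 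Replacing $F_1$ by $\Psi$ inside $F$ does not yield such a map, because the pointwise constraint $(d\Psi/\theta,\, dF_2/\theta,\dots,dF_n/\theta)\in A_*$ is a nontrivial algebraic condition that is not preserved under an arbitrary small perturbation of a single component. One could try to repair this by rescaling $f=dF/\theta$ by $\Psi'/f_1$ (using that $A$ is a cone) away from the zeros of $f_1$, but that changes the remaining components, creates new period conditions, and needs care at the zeros of $f_1$; none of this is supplied. Until an $A$-immersion with first component $\Psi$ and close to $F$ on $K$ is actually produced, Step~2 cannot begin.

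The paper avoids this compatibility problem altogether by not prescribing the first coordinate in advance. It covers $bL$ by two closed discs $\Delta,\Omega\subset\wt L\setminus K$ so that $K\cup\Delta$ and $K\cup\Omega$ are Runge admissible sets. On $\Delta$, being disjoint from $K$, one is free to place \emph{any} $A$-immersion with $|G_1|>\delta+\varepsilon$, and Proposition~\ref{prop:semiglob} (all components free) yields $\wt G\in\mathscr{I}_A(\wt L|E)$ with $|\wt G_1|>\delta$ on $\Delta$; here $\wt G_1$ is a genuine first component of an $A$-immersion, not an independently chosen function, so there is nothing to reconcile. Then on $\Omega$ one sets $H=\wt G+(0,c,0,\dots,0)$; adding a constant does not change the derivative, so $H$ is automatically an $A$-immersion, and Proposition~\ref{prop:semiglob-fixed} with the first coordinate held equal to $\wt G_1$ gives $\wt F$ with $|\wt F_2|>\delta$ on $\Omega$ while $\wt F_1=\wt G_1$ stays large on $\Delta$. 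This constant trick also makes your relabelling to ensure $z_2|_{A'}$ is unbounded unnecessary. Finally, note that the lemma is deliberately semiglobal (one enlargement from $K$ to $L$, one application of each proposition); running a full exhaustion of $M$ as you propose in Step~2 is more than is needed here — that induction belongs to the proof of Theorem~\ref{th:Mittag-Leffler-properness}.
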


\begin{proof}
For simplicity of exposition, assume that $bL$ is connected, hence a Jordan curve. 
Pick an open neighbourhood $\wt L$ of $L$ in $M$ and choose closed discs $\Delta, \Omega \subset \wt L \setminus K$ such that $bL \subset \Delta \cup \Omega$. Note that $\Delta \cap \Omega \neq \varnothing$ and assume that it is the union of two disjoint discs; see Figure~\ref{fig:lemma-almost-prop}.
%
\begin{figure}[h!]
\begin{center}
\includegraphics[scale=0.4, trim={0 7cm 0 9.75cm},clip]{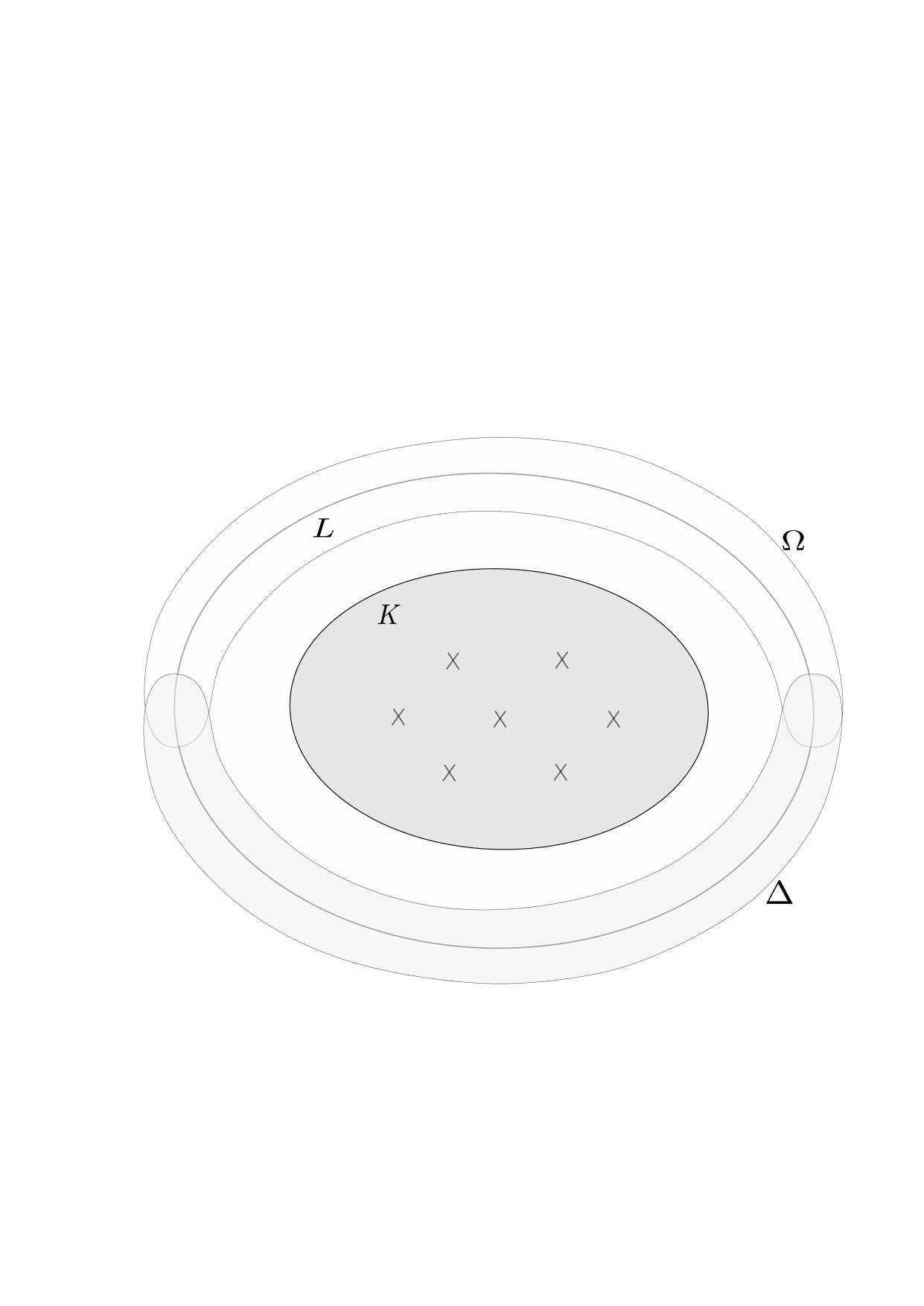}
\caption{Discs $\Delta$ and $\Omega$ in the proof of Lemma~\ref{lemma:almost-properness}.}
\label{fig:lemma-almost-prop}
\end{center}
\end{figure}

Firstly, consider a map $G=(G_1,\dots,G_n)\colon (K\setminus E) \cup \Delta \to \C^n$ of class  $\mathscr{I}_A(K\cup \Delta|E)$ such that $G=F$ on $K\setminus E$ and $|G_1(p)|>\delta+\varepsilon$ for all $p\in \Delta$. By Proposition~\ref{prop:semiglob}, there exists a map $\wt G = (\wt G_1,\dots,\wt G_n) \colon \wt L \setminus E \to \C^n$ of class $\mathscr{I}_A(\wt L|E)$ such that the difference map $\wt G-G$ extends continuously to $K\cup \Delta$, vanishes to order $r$ on $E\cup \Lambda$, and satisfies $|\wt G-G|<\varepsilon/2$ on $K\cup \Delta$. Therefore, $|\wt G_1(p)|>\delta$ for every $p\in \Delta$.
Secondly, define a map $H=(H_1,\dots,H_n)\colon (K\setminus E) \cup \Omega \to \C^n$ of class  $\mathscr{I}_A(K\cup \Omega|E)$ by $H = \wt G$ on $K\setminus E$  and $H = (\wt G_1, \wt G_2+c, \wt G_3,\dots,\wt G_n)$ on $\Omega$, where $c>0$ is so large that $|H_2(p)|>\delta+\varepsilon$ for all $p\in \Omega$.
Proposition~\ref{prop:semiglob-fixed} furnishes a map $\wt F = (\wt F_1,\dots,\wt F_n) \colon \wt L \setminus E \to \C^n$ of class $\mathscr{I}_A(\wt L|E)$ such that $\wt F-H$ extends continuously to $K\cup \Omega$, vanishes to order $r$ on $E\cup \Lambda$, satisfies $|\wt F-H|<\varepsilon/2$ on $K\cup \Omega$ and $\wt F_1=\wt G_1$ on $\wt L$.

It is clear that $\wt F$ satisfies the conclusion of the lemma.
In particular, we have that  $\widetilde{F}-F$ vanishes to order $r$ on $E\cup \Lambda$, $|\wt F-F| \leq |\wt F-H|+|\wt G-F| < \varepsilon$ on $K$, $|\wt F_1(p)|=|\wt G_1(p)|>\delta$ for all $p\in \Delta$, and $|\wt F_2(p)|>|H_2(p)|-\varepsilon/2>\delta$ for all $p\in \Omega$.
\end{proof}

\begin{lemma} \label{lemma:properness} 
Let $M$, $A$ and $n$ be as in Theorem~\ref{th:Mittag-Leffler-properness}-(ii), and assume that $K$, $L$, $E$ and $\Lambda$ are as in Lemma~\ref{lemma:almost-properness}.
Let $F \colon K\setminus E \to \C^n$ be a meromorphic $A$-immersion of class $\mathscr{I}_A(K|E)$ mapping $bK$ into $\C^n \setminus \{0\}$. Let $r \in \N$ and $\varepsilon>0$, and pick a number
\begin{equation*} \label{eq:lemproper-delta}
0 < \delta < \min \left\{ ||F(p)||_{\infty} \colon p \in bK \right\},
\end{equation*}
where $||\cdot||_\infty$ denotes the infinity norm in $\C^n$.
Then there exists a meromorphic $A$-immersion $\widetilde{F} \in \mathscr{I}_A(L|E)$ such that the following hold.
\begin{itemize}
\item[\rm (i)] $\widetilde{F}-F$ is continuous on $K$.

\item[\rm (ii)] $\widetilde{F}-F$ vanishes to order $r$ on $E\cup \Lambda$.

\item[\rm (iii)] $||\widetilde{F}-F||_{K} < \varepsilon$.

\item[\rm (iv)] $||\widetilde{F}(p)||_{\infty} > \delta$ for all $p \in L \setminus \mathring K$.

\item[\rm (v)] $||\widetilde{F}(p)||_{\infty} > 1/\varepsilon$ for all $p \in bL$.
\end{itemize}
\end{lemma}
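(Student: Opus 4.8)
The plan is to prove Lemma~\ref{lemma:properness} by a finite recursion that promotes the two-disc trick from the proof of Lemma~\ref{lemma:almost-properness}; the genuinely new features are that the estimate $\|\cdot\|_\infty>\delta$ must be forced on the \emph{entire} collar $L\setminus\mathring K$ and raised to $>1/\varepsilon$ on $bL$, and that the boundary can no longer be dominated by a single fixed pair of coordinates. First I would reduce to elementary enlargements. Choose a Morse exhaustion function $\tau$ on $M$ with regular values $c_0<c_1<\dots<c_N$ such that $K=\{\tau\le c_0\}$, $L=\{\tau\le c_N\}$ and each band $\{c_{j-1}<\tau<c_j\}$ carries at most one critical point of $\tau$ (see \cite[\textsection 1.12]{alarcon2021minimal}); put $K_j=\{\tau\le c_j\}$. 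Starting from $G^0:=F\in\mathscr I_A(K_0|E)$, construct inductively meromorphic $A$-immersions $G^j\in\mathscr I_A(K_j|E)$ together with positive numbers $\beta_j$ (with $\beta_j<\tfrac12\min_{i<j}\beta_i$ and $\sum_j\beta_j<\varepsilon$) so that $G^j-G^{j-1}$ is continuous on $K_{j-1}$, vanishes to order $r$ on $E\cup\Lambda$ and is uniformly smaller than $\beta_j$ there, and $\|G^j\|_\infty>\delta+\beta_j$ on $K_j\setminus\mathring K_{j-1}$, with the additional requirement $\|G^N\|_\infty>1/\varepsilon$ on $bK_N=bL$ at the last step. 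Then $\widetilde F:=G^N$ meets (i)--(v): on each slab $K_j\setminus\mathring K_{j-1}$ the bound $\delta+\beta_j$ survives the later perturbations, whose sum is $<\beta_j$, so $\|\widetilde F\|_\infty>\delta$ on all of $L\setminus\mathring K$, while $bL$ keeps its bound $1/\varepsilon$, and $\|\widetilde F-F\|_K<\sum_j\beta_j<\varepsilon$. A band containing a critical point is treated as in the proof of Proposition~\ref{prop:semiglob}: attach to $K_{j-1}$ a short arc $\Gamma_j$ (Morse index $1$) or a small disc (index $0$), extending the derivative $f^{j-1}$ across it so as to realize the period prescribed by \cite[Lemma~3.5.4]{alarcon2021minimal} (cf.\ \cite[Remark~3.5.5]{alarcon2021minimal}) while keeping one coordinate above $\delta$ along it, and then run the noncritical construction below on the resulting admissible set.

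The heart of the matter is a noncritical step, passing from $G:=G^{j-1}$, with $\|G\|_\infty>\delta$ on $bK_{j-1}$, to $G^j$ on the thickening $K_j$. I would cover $bK_{j-1}$ by closed sub-arcs $J_1,\dots,J_m$, cyclically ordered along each boundary curve with $J_k$ meeting only $J_{k\pm1}$, together with indices $i_1,\dots,i_m$ such that $|G_{i_k}|>\delta$ on a neighbourhood of $J_k$ in $K_{j-1}$, and attach to each $J_k$ a thin ``plank'' $P_k\subset M$ running from a collar neighbourhood of $J_k$ out past $bK_j$, with $P_k\cap P_{k'}=\varnothing$ unless $k'\in\{k-1,k,k+1\}$, so that $S_j:=K_{j-1}\cup\bigcup_kP_k$ is a Runge admissible set that deformation retracts onto $K_{j-1}$ and contains $K_j$. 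Fix one ``boosting coordinate'', say $z_1$. Process $k=1,\dots,m$: at stage $k$ we have a meromorphic $A$-immersion $H^{k-1}$ on a neighbourhood of $K_{j-1}\cup P_1\cup\dots\cup P_{k-1}$; extend it over the disc $P_k$ by a $\mathscr C^1$ map with derivative in $A_*$, agreeing with $H^{k-1}$ on the overlaps, such that along $P_k$ the first coordinate $|{\cdot}_1|$ grows from $\approx|F_1|$ near $J_k$ up to $>\delta+\beta_j$ (to $>1/\varepsilon$ if $j=N$) near $bK_j$, while $\|{\cdot}\|_\infty>\delta+\tfrac12\beta_j$ holds throughout $P_k$ — the latter thanks to $|G_{i_k}|>\delta$ near $J_k$ and to the fact that on the rest of $P_k$ one keeps $\widetilde F_{i_k}$ essentially unchanged while pumping $|{\cdot}_1|$, which is possible because $z_1$ is unbounded on $A\cap\{z_{i_k}=1\}$ ($A$ not lying in a hyperplane), the mechanism being that of \cite[\textsection 8]{alarcon2014null}. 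Then approximate this admissible map, uniformly on $K_{j-1}\cup P_1\cup\dots\cup P_{k-1}$ and with interpolation of order $r$ on $E\cup\Lambda$, by $H^k$ defined on a neighbourhood of $K_{j-1}\cup P_1\cup\dots\cup P_k$: use Proposition~\ref{prop:semiglob} for $k=1$ and Proposition~\ref{prop:semiglob-fixed} with $z_1$ kept fixed for $k\ge2$, the latter precisely so that the large values of $|{\cdot}_1|$ already created on $P_1\cup\dots\cup P_{k-1}$ are preserved verbatim. Thus exactly one component ($z_1$) is ever frozen, so the constraint $1\le n-2$ of Proposition~\ref{prop:semiglob-fixed} is met for $n\ge3$; and since $|{\cdot}_1|>\delta$ on every plank (including on the overlaps with the closing plank $P_m$, which meets $P_{m-1}$ and $P_1$), no combinatorial obstruction arises at the closure of the boundary curve. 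Setting $G^j:=H^m|_{K_j}$ finishes the step.

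The main obstacle is to keep $\|\cdot\|_\infty>\delta$ throughout the collar while the immersion is deformed plank by plank — in particular on each plank before its first coordinate has been pumped up — and to organize the freezing of components so that never more than $n-2$ of them must be retained at once; this is exactly what forces the choice of a single universal boosting coordinate and the cyclic, nearest-neighbour structure of the planks. The supporting affine-geometric input is that each coordinate function is unbounded on $A\cap\{z_i=1\}$ (because $A$ spans $\C^n$), together with the local sections $\rho_i$ of $\pi_i\colon A\to\C$ near $z_i=0$ and the Oka property of $A\cap\{z_i=1\}$, which let one prescribe derivatives in $A_*$ driving $|{\cdot}_1|$ to prescribed large modulus while leaving a chosen coordinate essentially fixed, exactly as in \cite[\textsection 8]{alarcon2014null}. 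A routine preliminary general-position adjustment — so that the frozen component is nonconstant and the zero set of its derivative avoids the boundaries of the admissible sets in play — makes Proposition~\ref{prop:semiglob-fixed} applicable at each stage, as in \cite[Theorem~7.7]{alarcon2014null}. Continuity of $\widetilde F-F$ across $E$, its vanishing to order $r$ on $E\cup\Lambda$, and the estimate $\|\widetilde F-F\|_K<\varepsilon$ are then inherited step by step from Propositions~\ref{prop:semiglob} and~\ref{prop:semiglob-fixed}, just as in the proof of Lemma~\ref{lemma:almost-properness}.
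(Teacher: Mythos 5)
There is a self-contradiction at the heart of the noncritical step. At stage $k\ge 2$ you do two incompatible things: you \emph{hand-craft} the extension over $P_k$ so that $|\cdot_1|$ grows from $\approx|F_1|$ near $J_k$ to $>\delta+\beta_j$ near $bK_j$ (so $z_1$ must be changed on $P_k$), and you then apply Proposition~\ref{prop:semiglob-fixed} with $z_1$ \emph{kept fixed}. Proposition~\ref{prop:semiglob-fixed} requires the frozen component $F'$ to be the restriction of a meromorphic function that is already defined on all of $M$ (holomorphic off $E$, nonconstant, with the zero set of $f'$ meeting $S$ only in $\mathring S$), and it returns $\widetilde F=(F',\widetilde F'')$ with $F'$ unchanged verbatim. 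Your hand-crafted first component on $P_k$ is only a $\mathscr C^1$ gadget on the admissible set, not the trace of such a global meromorphic function, so the hypotheses are not met; and if instead you feed in the genuine $H^{k-1}_1$ (which \emph{is} globally defined after the $k=1$ call to Proposition~\ref{prop:semiglob}), then $z_1$ is already determined on $P_k$ and you have no freedom whatsoever to make it large there — you cannot ``extend'' a component that is frozen. Either way the step as written cannot be carried out.

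This is not the route the paper takes. Following \cite[Lemma~8.2]{alarcon2014null} and \cite[Lemma~3.11.1]{alarcon2021minimal}, the coordinate one freezes at the $b$-th stage is the one $z_{i(b)}$ that is \emph{already large near the root arc} $\alpha_b$ — precisely the quantity you want to preserve exactly on a neighbourhood of $\alpha_b$ while you move the other coordinates. That coordinate is inherited from the previous step, so it is globally defined and the hypotheses of Proposition~\ref{prop:semiglob-fixed} are available (after a routine general-position adjustment, as in \cite[Theorem~7.7]{alarcon2014null}). The boost is applied to a \emph{different} coordinate, and the estimates already achieved on $P_1,\dots,P_{b-1}$ are retained by taking the semiglobal approximation $\varepsilon$-close on $K_{j-1}\cup P_1\cup\cdots\cup P_{b-1}$, not by freezing anything there. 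A secondary point: you should also verify that the planks tile $K_j\setminus\mathring K_{j-1}$ (so that no gap escapes the $\|\cdot\|_\infty>\delta$ bound) and be explicit about why one gets $\|\cdot\|_\infty>\delta$ on the \emph{interior} of each plank (it comes from $|z_{i(b)}|>\delta$ near $\alpha_b$ and the boosted coordinate near the far end, with the hand-crafted extension built so that the transition never drops the max below $\delta$); as written, the proposal asserts this without showing why it survives once the freezing mechanism is corrected.

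Once the freezing is reorganised in this way — freeze $z_{i(b)}$, boost another coordinate, preserve past work by proximity — the rest of your outline (the reduction to elementary enlargements via a Morse exhaustion, the treatment of critical points through Gromov integration on attached arcs, the top bound $1/\varepsilon$ enforced at the last band, and the telescoping $\sum\beta_j<\varepsilon$) is in the right spirit and matches the recursion the paper delegates to the cited lemmas.
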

The proof is very similar to that of \cite[Lemma~3.11.1]{alarcon2021minimal} (see also \cite[Lemma~8.2]{alarcon2014null}) and we leave the details out. The main difference is that we use Propositions~\ref{prop:semiglob} and~\ref{prop:semiglob-fixed} in place of \cite[Theorems~3.6.1 and~3.7.1]{alarcon2021minimal}. Moreover, in the inductive process, we ask the map $F^b$, constructed at the $b$-th step of induction, to satisfy analogous properties to \rm(C1$_b$)--\rm(C7$_b$) except \rm(C3$_b$) in the cited proof, and also that the difference $F^{b}-F^{b-1}$ is continuous on $L$ and vanishes to order $r$ on $E \cup \Lambda$, which is possible by our mentioned results.

\begin{proof}[Proof of Theorem~\ref{th:Mittag-Leffler-properness}]
We shall begin by carefully explaining the proof of assertion~(ii). The proof of part~(i), being less involved, follows the same line of arguments and is briefly discussed at the end.
Without loss of generality, we may assume that $U$ is an infinite union of connected components $\{W_j\}_{j\in\N}$; the proof is again simpler otherwise.
Set
\begin{equation}\label{eq:muj7}
\mu_j := \min \left\{||F(p)||_{\infty} \colon p \in W_j \right\}-1\ge -1
\end{equation} 
and note that, by~\eqref{eq:mu_j}, $\lim_{j\to \infty}\mu_j = +\infty$.
As in the proof of Theorem~\ref{th:Mittag-Leffler}, take a normal exhaustion $\{M_j\}_j$ of $M$ satisfying the same properties as~\eqref{eq:exhaustion}, and assume that $W_j$ is the unique component of $U$ lying in $M_{j}\setminus M_{j-1}$ for every $j\geq 1$.
Choose a full holomorphic $A$-embedding $F^0 \colon M_0 \to \C^n$ such that $F^0$ has no zeros on $bM_0$, let $0<\delta_0<\min\{||F^0(p)||_{\infty} \colon p\in bM_0\}$ and $\varepsilon_0 := \varepsilon/2$.
We shall inductively construct a decreasing sequence of positive numbers $\{\varepsilon_j\}_j$, a sequence of positive numbers $\{\delta_j\}_j$ with $\lim_{j\to \infty}\delta_j=+\infty$, and a sequence of full meromorphic $A$-immersions $\{F^j\colon M_j\setminus (E\cap M_j) \to \C^n\}_j$ of class $\mathscr{I}_A(M_j|E \cap M_j)$, such that properties (1$_j$)--(5$_j$) in the proof of Theorem~\ref{th:Mittag-Leffler} as well as the following ones hold for all $j\ge 1$.
\begin{itemize}
\item[\rm (6$_j$)] $\delta_j > \min\{j,\delta_{j-1},\mu_j\}$.

\item[\rm (7$_j$)] $||F^j(p)||_{\infty} > \delta_j$ for all $p\in bM_j$.

\item[\rm (8$_j$)] $||F^j(p)||_{\infty} > \min\{\delta_{j-1},\mu_j\}$ for all $p\in M_{j}\setminus \mathring M_{j-1}$.
\end{itemize}
The base of induction is provided by the triple $(F^0, \delta_0, \varepsilon_0)$. (Observe that (6$_0$) and (8$_0$) are void.)
For the inductive step, assume that for some fixed integer $j\geq 1$, we have already found $(F^{i}, \delta_i, \varepsilon_i)$ for all $i \in \{0,\dots,j-1\}$, satisfying properties \rm(2$_i$)--\rm(4$_i$) and \rm(7$_i$).
Following the ideas in the proof of Theorem~\ref{th:Mittag-Leffler}, we construct the admissible set $L_j = M_{j-1}\cup \Gamma_j \cup W_j$ (see Figure~\ref{fig:induction ML}) and extend $F^{j-1}$ to $\Gamma_j$ with the same properties as in the cited proof, while ensuring in addition that 
\begin{equation}\label{eq:muj72}
||F^{j-1}||_{\infty}>\min \left\{\delta_{j-1},\mu_j \right\} \quad \textrm{on } \Gamma_j,
\end{equation} 
as we may by~\eqref{eq:muj7} and~(7$_{j-1}$).
Call this extended map $\widetilde{F}^j$; note that $\wt F^j \in \mathscr{I}_A(L_j|E \cap L_j)$, $E\cap L_j = E \cap M_j$, $\widetilde{F}^j = F$ on $W_j$, and $\widetilde{F}^j = F^{j-1}$ on $M_{j-1}\cup \Gamma_j$.
By Proposition~\ref{prop:semiglob}, assume that $\widetilde{F}^j$ is a meromorphic $A$-immersion on a small smoothly bounded compact neighbourhood $\widetilde{M}_{j-1} \subset M_j$ of $L_j$ (see Figure~\ref{fig:induction ML-prop}), satisfying the required interpolation and approximation conditions, and such that
\begin{equation} \label{proof:ML-P-tildeFj}
||\widetilde{F}^j||_{\infty} > \min\{\delta_{j-1},\mu_j\} \quad \textrm{on } \widetilde{M}_{j-1}\setminus \mathring M_{j-1};
\end{equation}
see~\eqref{eq:muj7},~(7$_{j-1}$) and~\eqref{eq:muj72}.
Pick $\delta_j$ satisfying (6$_j$). In view of~\eqref{proof:ML-P-tildeFj}, Lemma~\ref{lemma:properness} and Theorem~\ref{th:compact-embedding} furnish us with a meromorphic $A$-immersion $F^j \colon M_j\setminus (E\cap M_j) \to \C^n$ satisfying (1$_j$)--(4$_j$) and (6$_j$)--(8$_j$), whilst condition (5$_j$) is fulfilled by choosing $\varepsilon_j>0$ small enough. This closes the induction. 
%
\begin{figure}[h!]
\begin{center}
\includegraphics[scale=0.45, trim={0 4cm 0 5cm},clip]{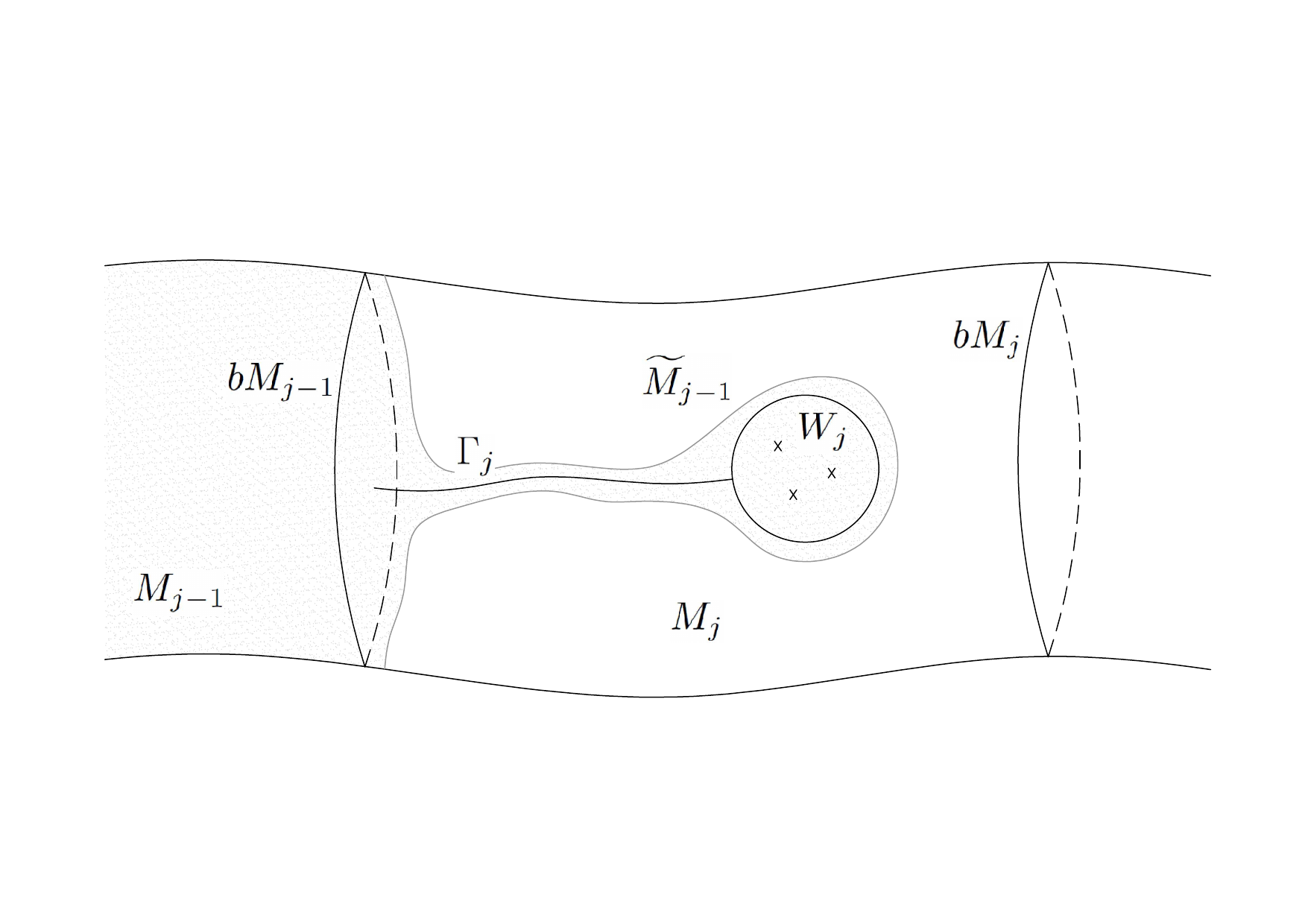}
\caption{Sets used at the $j$-th step of induction.}
\label{fig:induction ML-prop}
\end{center}
\end{figure}

By the proof of Theorem~\ref{th:Mittag-Leffler}, the limit map
\begin{equation*}
G := \lim_{j\to \infty} F^j \colon M\setminus E \longrightarrow \C^n
\end{equation*}
exists and satisfies the conclusion of that theorem. In particular, we have that 
\begin{equation} \label{eq:proof:ML-P-G-Fj}
	||G-F^j||_{M_j}<2\varepsilon_{j} \leq \varepsilon, \quad j\geq 0;
\end{equation}
see~\eqref{eq:proofML-G-Fj}. It remains to check that $G:M\setminus E\to\C^n$ is a proper map.
So, take a divergent sequence $\{p_m\}_m \subset M\setminus E$. If $\{p_m\}_m$ diverges on $M$, then~\eqref{eq:exhaustion},~\eqref{eq:proof:ML-P-G-Fj} and (8$_j$) imply that $\{G(p_m)\}_m$ diverges in $\C^n$; recall that $\lim_{j\to\infty}\delta_j=\lim_{j\to\infty}\mu_j=+\infty$. If on the contrary, $\{p_m\}_m$ admits a subsequence $\{q_m\}_m$ converging to a point in $E$, then $\{G(q_m)\}_m$ diverges in $\C^n$ as well since $G$ has an effective pole at every point in $E$. This shows that $G$ is proper, concluding the proof of~(ii).

For assertion~(i), apply Lemma~\ref{lemma:almost-properness} instead of Lemma~\ref{lemma:properness} and omit condition (8$_j$) in the induction process. By (7$_j$) and~\eqref{eq:proof:ML-P-G-Fj}, it follows that $||G(p)||_\infty>\delta_j-\varepsilon$ for all $p\in bM_j$, which goes to $+\infty$ as $j\to +\infty$. Taking into account~\eqref{eq:exhaustion} and that $G$ has an effective pole at every point in $E$, this implies that $G:M\setminus E\to\C^n$ is an almost proper map.
\end{proof}

A simplification of the proof of Theorem~\ref{th:Mittag-Leffler-properness} also gives the following.
%
%
\begin{proposition}\label{co:ML-A-proper}
Let $A$, $M$ and $E$ be as in Theorem~\ref{th:Mittag-Leffler-properness}, and assume that for $i=1,2$, $A\cap\{z_i=1\}$ is an Oka manifold and the coordinate projection $\pi_i \colon A \to \C$ onto the $z_i$-axis admits a local holomorphic section $\rho_i$ near $z_i=0$ with $\rho_i(0) \neq 0$. Then there exists a meromorphic $A$-embedding $F=(F_1,\ldots,F_n):M\setminus E\to\C^n$ of class $ \mathscr{I}_A(M|E)$ such that $(F_1,F_2):M\setminus E\to\C^2$ is proper.
\end{proposition}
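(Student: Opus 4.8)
The plan is to follow the proof of Theorem~\ref{th:Mittag-Leffler-properness}-(ii) in a simplified form. Since no approximation data $(U,\Lambda,r,\varepsilon,F)$ is prescribed, there is nothing to interpolate or approximate, and we only need to force the \emph{first two} coordinate functions $F_1,F_2$ to escape every compact subset of $\C^2$ — for which the relevant hypotheses (namely, for $i=1,2$: $A\cap\{z_i=1\}$ Oka and a local holomorphic section $\rho_i$ of $\pi_i$ near $z_i=0$ with $\rho_i(0)\neq 0$) are exactly what is available. The only genuinely new ingredient is a two-coordinate analogue of Lemma~\ref{lemma:properness}, proved using the hypotheses required only for $i=1,2$.

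\emph{A two-coordinate properness lemma.} Under the hypotheses of the proposition, if $K\subset\mathring L$ are smoothly bounded Runge compact domains in $M$, $E,\Lambda\subset\mathring K$ are finite disjoint sets, and $F\in\mathscr{I}_A(K|E)$ satisfies $(F_1,F_2)(bK)\subset\C^2\setminus\{0\}$, then for every $r\in\N$, $\varepsilon>0$ and $0<\delta<\min\{\|(F_1(p),F_2(p))\|_\infty:p\in bK\}$ there is $\widetilde F\in\mathscr{I}_A(L|E)$ with $\widetilde F-F$ continuous on $K$ and vanishing to order $r$ on $E\cup\Lambda$, $\|\widetilde F-F\|_K<\varepsilon$, $\|(\widetilde F_1(p),\widetilde F_2(p))\|_\infty>\delta$ for all $p\in L\setminus\mathring K$, and $\|(\widetilde F_1(p),\widetilde F_2(p))\|_\infty>1/\varepsilon$ for all $p\in bL$. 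I would prove this exactly as Lemma~\ref{lemma:properness} (which itself follows \cite[Lemma~3.11.1]{alarcon2021minimal} and \cite[Lemma~8.2]{alarcon2014null}), the sole change being that one tracks only the pair $(F_1,F_2)$: decompose $\overline{L\setminus\mathring K}$ into finitely many cells glued successively onto a thin collar of $bK$ so that each intermediate set is Runge in $M$, and at each cell observe that on the part of its boundary shared with the region already treated one of $|F_1|,|F_2|$, say $|F_i|$ with $i\in\{1,2\}$, exceeds $\delta$; then, \emph{keeping $F_i$ fixed} by Proposition~\ref{prop:semiglob-fixed} with $k=1$ (legitimate precisely because $A\cap\{z_i=1\}$ is Oka and $\pi_i$ has the required local section), extend the remaining coordinates over the cell while arranging $|F_i|>\delta$ there — possible because adding a constant to a single coordinate of an $A$-immersion leaves its derivative, hence the $A$-immersion condition, unchanged (as in Lemma~\ref{lemma:almost-properness}), so $|F_i|$ can be raised on a fresh cell and glued in by the integration lemma \cite[Lemma~3.5.4]{alarcon2021minimal}. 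The essential bookkeeping is that the lower bound on $(F_1,F_2)$ is kept at \emph{every} stage, so the dominant coordinate is always among $z_1,z_2$.

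With this lemma available, fix a normal exhaustion $M_0\Subset M_1\Subset\cdots$ of $M$ by connected smoothly bounded Runge compact domains with $E\cap bM_j=\varnothing$ for all $j$. Choose a full $F^0\in\mathscr{I}_A(M_0|E\cap M_0)$ — obtained from local models around the points of $E\cap M_0$ in which $F_1$ has a pole and then globalized by Proposition~\ref{prop:semiglob} — such that $(F^0_1,F^0_2)$ has no zero on $bM_0$ and $F^0_1$ has a genuine pole at every point of $E\cap M_0$; this is possible since $z_1$ is non-constant, hence unbounded, on $A_*$. Pick $0<\delta_0<\min\{\|(F^0_1,F^0_2)(p)\|_\infty:p\in bM_0\}$ and $\varepsilon_0=1$. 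Inductively, from $F^{j-1}\in\mathscr{I}_A(M_{j-1}|E\cap M_{j-1})$ full, with $F^{j-1}_1$ having poles at $E\cap M_{j-1}$ and $\|(F^{j-1}_1,F^{j-1}_2)\|_\infty>\delta_{j-1}$ on $bM_{j-1}$, attach arcs and discs passing from $M_{j-1}$ to $M_j$ (absorbing the topology change and the new points of $E$, again with a first-coordinate pole) exactly as in the proofs of Proposition~\ref{prop:semiglob} and Theorem~\ref{th:Mittag-Leffler}, and then apply the two-coordinate properness lemma together with Theorem~\ref{th:compact-embedding} to get a full meromorphic $A$-embedding $F^j\in\mathscr{I}_A(M_j|E\cap M_j)$ with $\|F^j-F^{j-1}\|_{M_{j-1}}<\varepsilon_{j-1}$, $F^j-F^{j-1}$ vanishing to high order on $E$, $\|(F^j_1,F^j_2)\|_\infty>\delta_{j-1}$ on $M_j\setminus\mathring M_{j-1}$ and $>\delta_j$ on $bM_j$ for some chosen $\delta_j>\max\{j,\delta_{j-1}\}$; finally Cauchy estimates give $\varepsilon_j\in(0,\varepsilon_{j-1}/2)$ so that $2\varepsilon_j$-perturbations of $F^j$ stay embeddings on $M_{j-1}$. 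The limit $F=\lim_j F^j\in\mathscr{I}_A(M|E)$ is then a full meromorphic $A$-embedding, and $(F_1,F_2)\colon M\setminus E\to\C^2$ is proper: a sequence diverging in $M$ eventually lies in $M_j\setminus\mathring M_{j-1}$ with $j\to\infty$, where $\|(F_1,F_2)\|_\infty>\delta_{j-1}\to\infty$, while a sequence accumulating at a point of $E$ has $|F_1|\to\infty$, $F_1$ having a pole there.

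The main obstacle is the two-coordinate properness lemma and, inside it, the restriction that one may keep only $z_1$ or $z_2$ fixed; this forces carrying the lower bound on the pair $(F_1,F_2)$ through the whole construction, for if it ever dropped, the dominant coordinate on some cell could be one of $z_3,\dots,z_n$, for which no fixed-component approximation is available. A secondary point needing care is to install and propagate the poles of $F_1$ over $E$, so that $(F_1,F_2)$ genuinely leaves every compact subset of $\C^2$ near $E$; this is routine given that $z_1$ is non-constant on $A_*$.
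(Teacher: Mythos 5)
Your proposal is correct and follows the route the paper indicates; the paper gives no detailed proof here, only the one-line remark that it is ``a simplification of the proof of Theorem~\ref{th:Mittag-Leffler-properness},'' and your argument is exactly that simplification. You correctly isolate the two points the paper leaves implicit: a two-coordinate analogue of Lemma~\ref{lemma:properness}, valid because the lower bound on $\|(F_1,F_2)\|_\infty$ is propagated at every stage so the dominant coordinate is always among $z_1,z_2$ (for which Proposition~\ref{prop:semiglob-fixed} applies), and the need to install a pole of $F_1$ at each point of $E$ in the initial local model (preserved by the Mittag-Leffler step, since $G-F$ is continuous on $E$), without which $(F_1,F_2)$ could remain bounded along a sequence accumulating at $E$.
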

%


\subsection*{Acknowledgements}
Research partially supported by the State Research Agency (AEI) via the grants no.\ PID2020-117868GB-I00  and PID2023-150727NB-I00, and the ``Maria de Maeztu'' Unit of Excellence IMAG, reference CEX2020-001105-M, funded by MICIU/AEI/10.13039/501100011033 and ERDF/EU, Spain.

We are grateful to two anonymous referees for their helpful remarks and suggestions which led to improve the exposition.





\medskip
\noindent Antonio Alarc\'{o}n, Tja\v{s}a Vrhovnik

\smallskip
\noindent Departamento de Geometr\'{\i}a y Topolog\'{\i}a e Instituto de Matem\'aticas (IMAG), Universidad de Granada, Campus de Fuentenueva s/n, E--18071 Granada, Spain.

\smallskip
\noindent  e-mail: {\tt alarcon@ugr.es}, {\tt vrhovnik@ugr.es}

\end{document}